\newcommand{\N}{\mathbb{N}}
\newcommand{\R}{\mathbb{R}}
\newcommand{\de}{\mathrm{d}}
\newcommand{\M}{\mathcal{M}}
\newcommand{\Cc}{\mathcal{C}}
\newcommand{\Moc}{\M(I;\mathbb{R}^d)}
\newcommand{\Kb}{\mathcal{K}}
\newcommand{\BV}{\operatorname{BV}(I;\mathbb{R}^d)}
\newcommand{\bvnorm}[1]{\|#1\|_{\operatorname{BV}}}
\newcommand{\lonenorm}[1]{\|#1\|_{L^1}}
\newcommand{\optu}{\bar{u}}
\newcommand{\optom}{\bar{\omega}}
\newcommand{\optq}{\bar{q}}
\newcommand{\om}{\omega}
\newcommand{\Lzwo}{L^2(I;\mathbb{R}^d)}
\newcommand{\lzwo}[2]{\left(#1,#2\right)_{L^2}}
\newcommand{\mnorm}[1]{\|#1\|_{\mathcal{M}}}
\newcommand{\cnorm}[1]{\|#1\|_{\mathcal{C}}}
\newcommand{\ynorm}[1]{\|#1\|_{Y}}
\newcommand{\tr}[1]{tr(C(1)^{-1})}
\newcommand{\eps}{\varepsilon}
\newcommand{\vertiii}[1]{{\left\vert\kern-0.25ex\left\vert\kern-0.25ex\left\vert #1
    \right\vert\kern-0.25ex\right\vert\kern-0.25ex\right\vert}}
\newcommand*\rel@kern[1]{\kern#1\dimexpr\macc@kerna}
\newcommand*\widebar[1]{%
  \begingroup
  \def\mathaccent##1##2{%
    \rel@kern{0.8}%
    \overline{\rel@kern{-0.8}\macc@nucleus\rel@kern{0.2}}%
    \rel@kern{-0.2}%
  }%
  \macc@depth\@ne
  \let\math@bgroup\@empty \let\math@egroup\macc@set@skewchar
  \mathsurround\z@ \frozen@everymath{\mathgroup\macc@group\relax}%
  \macc@set@skewchar\relax
  \let\mathaccentV\macc@nested@a
  \macc@nested@a\relax111{#1}%
  \endgroup
}
\numberwithin{equation}{section}
\definecolor{darkred}{rgb}{.7,0,0}
\definecolor{green}{rgb}{0,0.7,0}
\theoremstyle{plain}
\newtheorem{theorem}{Theorem}
\newtheorem{prop}[theorem]{Proposition}
\newtheorem{lemma}[theorem]{Lemma}
\newtheorem{coroll}[theorem]{Corollary}
\theoremstyle{definition}
\newtheorem{assumption}{Assumption}
\theoremstyle{remark}
\newtheorem{remark}{Remark}
\begin{document}
\title[]{A fast Primal-Dual-Active-Jump method for minimization in $\operatorname{BV}((0,T);\R^d)$}

\pagestyle{myheadings}
\author[]{Philip Trautmann$^{\tt1}$ \and Daniel Walter$^{\tt2}$}

\thanks{\vspace{-1em}\newline\noindent
$^{\tt1}$ Institute for Mathematics and Scientic Computing,
 Karl-Franzens-Universit\"at,
 Heinrichstr. 36, 8010 Graz, Austria. The author was supported by the ERC advanced grant 668998 (OCLOC) under the EU's H2020 research program.
 \newline
 $^{\tt2}$ Johann Radon Institute for Computational and Applied Mathematics,
  Altenbergerstr. 69, 4040 Linz, Austria,~\smaller\tt  daniel.walter@oeaw.ac.at
}

\subjclass{26A45, 65J22, 65K05, 90C25, 49M05}
\keywords{Bounded variation functions, generalized conditional gradient,sparsity}
\maketitle


\begin{abstract}
We analyze a solution method for minimization problems over a space of $\R^d$-valued functions of bounded variation on an interval~$I$. The presented method relies on piecewise constant iterates. In each iteration the algorithm alternates between proposing a new point at which the iterate is allowed to be discontinuous and optimizing the magnitude of its jumps as well as the offset. A sublinear~$\mathcal{O}(1/k)$ convergence rate for the objective function values is obtained in general settings. Under additional structural assumptions on the dual variable this can be improved to a locally linear rate of convergence~$\mathcal{O}(\zeta^k)$ for some~$\zeta <1$. Moreover, in this case, the same rate can be expected for the iterates in~$L^1(I;\R^d)$.
\end{abstract}
\section{Introduction}
We consider minimization problems of the form
\begin{align}\label{def:bvprob}
\min_{u\in \BV} j(u)= \lbrack F(Ku) + \beta \mnorm{u'} \rbrack.\tag{$\mathcal{P}$}
\end{align}
where the minimizer is sought for in the space of~$\R^d$-valued functions of bounded variation on an interval~$I=(0,T)$. Here~$K$ denotes a linear and continuous operator mapping to a Hilbert space of observations~$Y$ and~$F$ is assumed to be a convex smooth loss function. Given~$\beta>0$, the second term in the objective functional penalizes the total variation norm of the distributional derivative~$u'$. It is well known that such a penalization favours minimizers which only change their values at a finite number of time points. This structural property of~\eqref{def:bvprob} makes it appealing for a variety of practical applications. For example, we point out PDE constraint optimal control problems,~\cite{CasasBV, EngelBV,NeitzelBV} and the denoising of scalar signals,~\cite{rudin,little}. For the precise functional analytic setting we refer to Section~\ref{sec:optproblem}.
\subsection{Contribution}
The aim of this paper is to analyze a simple yet efficient iterative solution algorithm for problem~\eqref{def:bvprob}. It relies on the identification of~$u \in \BV$ with its distributional derivative~$u'$ and the mean values of its components~$a_u  \in \R^d$. The proposed method generates sequences of piecewise constant iterates~$u_k$ and active sets~$\mathcal{A}_{k}=\{(\mu^k_i, v^k_i)\}^{N_k}_{i=1}$ which store the jumps~$v^k_i$ of~$u_k$ as well as the associated magnitudes~$\mu_i$. By a "jump"~$v^k_i$ we refer to an atomic measure supported on a position~$t^k_i \in I$ together with a normalized direction~$\mathbf{v}^k_i \in \R^d$. Each iteration then proceeds in three phases: First we allow for an additional jump~$\widehat{v}^k$ in the iterate~$u_k$. The position and the direction of this new candidate jump are determined based on a dual variable~$p_k \in \mathcal{C}_0(I;\R^d)$. Subsequently we determine improved magnitudes for~\textit{all} jumps in the active set as well as a new vector of mean values by solving the finite-dimensional convex minimization problem
\begin{align*}
\min_{\mu_i \geq 0,a_u \in \R^d} \left \lbrack F(Ku)+ \beta \sum^{N_k +1}_{i=1} \mu_i \right \rbrack~ s.t.~ u'=\mu_{N_k +1} \widehat{v}^k+\sum^{N_k}_{i=1} \mu^k_i v^k_i.
\end{align*}
Finally the active set is updated by removing all jumps whose associated magnitude was set to zero. The theoretical contribution of the present manuscript is twofold. First we prove that the generated sequence~$u_k$ indeed converges, on subsequences, to minimizers of~\eqref{def:bvprob} and the functional values~$j(u_k)$ converge sublinearly to the minimum value. Second, under appropriate structural assumptions on the optimal dual variable, similar to~\cite{NeitzelBV,EngelBV}, we deduce the local linear convergence of~$j(u_k)$ and of the iterates $u_k$ with respect to the strict topology on~$\BV$.
\subsection{Related work}
The efficient algorithmic solution of\eqref{def:bvprob} is a delicate issue for a variety of reasons. On the one hand this is attributed to the appearance of the BV seminorm which makes the objective functional nonsmooth. Moreover~$j$ lacks coercivity with respect to~$u$ which is often a vital tool in the derivation of fast convergence result for minimization schemes. On the other hand we point out that~$\BV$ is non-reflexive. Many well-studied algorithms for non-smooth optimization rely on the reflexive structure of the underlying space and thus donot yield direct extensions to the problem at hand.

A first straightforward approach to circumventing the aforementioned difficulties consists of discretizing the space~$\BV$ in~\eqref{def:bvprob}. More in detail, instead of minimizing over all~$u\in \BV$, one could only consider piecewise constant~$u_h$ that solely jump in the nodes~$0=t_0 < t_1< \cdots < t_{N_h}=T$ of a partition of~$I$. This reduces~\eqref{def:bvprob} to a finite dimensional convex minimization problem with a nonsmooth group sparsity regularization term,~\cite{huang}. The solution of discretized 1D BV problems has been addresses e.g. in~\cite{condattaut,jimnezBV,wahlbergBV,villeBV}. Nonetheless, such reasoning often leads to algorithms that exhibit~\textit{mesh-dependency} meaning that their convergence behaviour critically depends on the partition of~$I$ and might degenerate as~$N_h \rightarrow \infty$. To mitigate these effects, a second line of works,~see e.g.~\cite{ClasonBV, HafeBV}, proposes the regularization of~\eqref{def:bvprob} by adding~$(\eps/2) \|u'\|^2_{L^2}$ for~$0 <\eps << 1$ and minimizing for~$u$ in the Sobolev space~$H^1(I;\R^d)$. Since the total variation norm of~$u'$ remains present in objective functional, the derivative of minimizers can still be expected to exhibit sparsity i.e. its support is small. However, due to the Sobolev seminorm penalty, minimizers cannot be piecewise constant if~$\eps> 0$. For this reason algorithmic approaches based on regularization are usually accompanied by a path-following strategy for~$\eps \rightarrow 0$ which requires additional analysis.

If~\eqref{def:bvprob} is restricted to mean-value free BV functions, it can be equivalently reformulated as minimization problem over the space of~$\R^d$-valued vector measures. Over the past years there has been an increasing body of work on the efficient solution of such problems using exchange type algorithms,~\cite{WalterPDAP,Denoyelle,Flinthexchange,Bredies,Boydsparse}, which rely on iterates comprised of finitely many Dirac Delta functionals. These alternate between proposing a new Dirac Delta (i.e. a "jump" in our terminology) and approximately solving finite-dimensional convex and/or nonconvex subproblems to achieve sufficient descent. Most recently, linear convergence of such methods relying on convex subproblems has been addressed in~\cite{Flinthexchange}, for~$d=1$, and~\cite{WalterPDAP}, for the general vector-valued case. Our approach is closest related to the earlier work~\cite{WalterPDAP} but differs in the treatment of the convex subproblems. More in detail while the present method relies solely on optimizing the magnitudes~$\mu_i$ in each iteration ($\# \mathcal{A}_k $ DOF), the linear convergence result of~\cite{WalterPDAP} also requires the optimization of the jump directions ($d\#\mathcal{A}_k $ DOF). Hence we obtain the same theoretical convergence guarantees while solving smaller subproblems. Let us also mention the finite step convergence results of~ \cite{Flinthexchange,Denoyelle}. However these require "point-moving" i.e. an additional step in which the jump positions are optimized. This constitutes a nonconvex problem and is therefore not considered in the present work. The idea of using exchange type methods for 1D BV penalties has previously been proposed in~\cite{BoydBV} together with a sublinear convergence result.

Finally we point out the denoising problem for a scalar signal~$y_d \in L^2(I)$. In our setting this corresponds to the case of~$d=1$,~$F= (1/2) \|\cdot-y_d\|^2_{L^2}$ and~$K= \operatorname{Id}$. For this particular instance of~\eqref{def:bvprob} the unique minimizer can be determined directly using a~\textit{taut-string-method}, see~e.g.~\cite{grasmairBV,KunischBV}. To the best of our knowledge this method does, however, not yield extensions to the case of a general observation operator~$K$ and the vector-valued case~$d>1$.

\subsection{Outline of the paper}
The relevant notation used throughout the paper is introduced in Section~\ref{sec:notation}. In Section~\ref{sec:optproblem} we equivalently reformulate~\eqref{def:bvprob} as minimization problem over the distributional derivative and the mean value of~$u$. Subsequently, this equivalence is used to derive first-order necessary and sufficient optimality conditions. A detailed description of the proposed solution algorithm for~\eqref{def:bvprob} can be found in Section~\ref{sec:algsol}. The convergence of the method is adressed in Section~\ref{sec:convergence}. Finally, Section~\ref{sec:num} finishes the paper with numerical experiments illustrating our theoretical findings.
\section{Notation \& definitions} \label{sec:notation}
In the following set~$I=(0,T)$ for some~$T>0$ and fix~$d\in \N$. Denote by~$(\cdot,\cdot)_{\R^d}$ the euclidean inner product on~$\R^d$ and let~$|\cdot|_{\R^d}$ denote the corresponding norm. By~$\mathcal{C}_0(I;\R^d)$ together with the usual supremum norm
\begin{align*}
\cnorm{\varphi}=\sup_{t \in I } |\varphi(t)|_{\R^d} \quad \forall \varphi \in \mathcal{C}_0(I;\R^d)
\end{align*}
we refer to the Banach space of~$\R^d$-valued continuous functions on~$I$ that vanish at its boundary. Its topological dual space is readily identified with the space of regular vector measures~$\mathcal{M}(I;\R^d)$. The corresponding duality pairing is denoted by~$\langle \cdot, \cdot\rangle$.
For example, if~$q$ is a discrete measure, i.e.~$q=\sum^N_{i=1} \mathbf{q}_i \, \delta_{t_i}$ where~$\mathbf{q}_i \in \R^d$ and~$\delta_{t_i}$ denotes the Dirac Delta functional supported on~$t_i \in I$, then
\begin{align*}
\langle \varphi , q \rangle= \sum^N_{i=1} (\varphi(t_i),\mathbf{q}_i)_{\R^d}.
\end{align*}
 The space~$\mathcal{M}(I;\R^d)$ is equipped with the canonical dual norm
\begin{align*}
\mnorm{q}= \sup_{\cnorm{\varphi}=1}\langle \varphi,q \rangle.
\end{align*}

We call~$u\in L^1(I;\R^d)$ a \emph{function of bounded variation} if its distributional derivative~$u'$ is representable by a an element of~$\M(I;\R^d)$ i.e.
\begin{align*}
\lzwo{ u}{\varphi'}= \langle \varphi, u'  \rangle \quad \forall \varphi \in \mathcal{C}^\infty_c(I;\R^d).
\end{align*}
The set of~$\R^d$-valued functions of bounded variation on~$I$ is now defined as
\begin{align*}
\BV = \left \{\,u \in L^1(I;\R^d)\;|\;\mnorm{u'}<\infty\,\right\}.
\end{align*}
Equipping~$\BV$ with the norm
\begin{align*}
\bvnorm{u}=\mnorm{u'}+\lonenorm{u} \quad \forall u \in \BV,
\end{align*}
where
\begin{align*}
\|u\|_{L^1}=\int_I |u(s)|_{\R^d}~\mathrm{d}s,
\end{align*}
makes it a Banach space which continuously embeds into~$L^p(I)$,~$p\in [1,\infty]$, the embedding being compact for~$p<\infty$. Given a function~$u \in L^1(I;\R^d)$, the vector of the mean values of its components is defined as
\begin{align*}
a_u= \frac{1}{T} \int_I u(t)~\de t,
\end{align*}
where integration has to be understood in the sense of Bochner.
From e.g.~\cite[Theorem~3.44]{ambrosio} we conclude the existence of constants~$C_1,~C_2>0$ with
\begin{align*}
C_1(|a_u|_{\R^d}+ \mnorm{u'}) \leq  \bvnorm{u} \leq C_2 (|a_u|_{\R^d}+ \mnorm{u'} ) \quad \forall u \in \BV.
\end{align*}
Following~e.g.~\cite[Remark~3.12]{ambrosio}~$\BV$ can be identified as the topological dual space of a separable Banach space. A sequence~$\{u_k\}_{k\in\N} \subset \BV$ is called weak* convergent in~$\BV$ with limit~$\optu$ if
\begin{align*}
\|u_k-\optu \|_{L^1} \rightarrow 0, \quad u'_k \rightharpoonup^* \optu' .
\end{align*}
Due to the sequential Banach-Alaoglu theorem every bounded sequence in~$\BV$ admits a weak* convergent subsequence.
Furthermore a weak* convergent sequence~$\{u_k\}_{k\in\N} \subset \BV$ is called convergent with respect to the strict topology on~$\BV$, or shortly strictly convergent, if additionally~$\mnorm{u'_k} \rightarrow \mnorm{\optu}$ holds. This is indicated by "$\rightharpoonup^s$". The strict topology on~$\BV$ is induced by the metric
\begin{align*}
d(u_1,u_2)= \|u_1-u_2\|_{L^1}+|\mnorm{u_1}-\mnorm{u_2}|.
\end{align*}
Last, given an open interval~$(t,1)$ for some~$t \in [0,1)$ its characteristic function is defined by
\begin{align*}
\chi_{t}=  \begin{cases} 0 & \text{on}~I \setminus (t,1) \\ 1& \text{else}  \end{cases}.
\end{align*}
There holds~$\chi_{t} \in \operatorname{BV}(I)$ with~$\chi'_t=\delta_t$,~$t>0$, and~$\chi'_0=0$, respectively.
\section{Optimization problem} \label{sec:optproblem}
The following assumptions concerning~\eqref{def:bvprob} are made throughout this paper.
\begin{assumption} \label{ass:problem}
In the following let~$Y$ be a Hilbert space with inner product~$(\cdot,\cdot)_Y$ and induced norm~$\|\cdot\|_Y$.
There holds:
\begin{itemize}
\item The operator~$K \colon L^2(I;\R^d)\to Y$ is linear and continuous.
\item The mapping~$F \colon Y \to\R$ is strictly convex and continuously Fr\'echet differentiable. The Fr\'echet derivative~$F'(y)\in\mathcal{L}(Y,\R)$ of~$F$ at~$y\in Y$ is identified with its Riesz representative~$\nabla F(y)  \in Y$ i.e.
\begin{align*}
F'(y) \delta y=(\nabla F(y), \delta y)_Y \quad \forall \delta y\in Y.
\end{align*}
\item The functional~$j \colon \BV \to \R $ in~\eqref{def:bvprob} is radially unbounded i.e.
\begin{align*}
\bvnorm{u_k} \rightarrow \infty \Rightarrow j(u_k) \rightarrow +\infty.
\end{align*}
\end{itemize}
\end{assumption}
Existence of solutions to~\eqref{def:bvprob} can be obtained using the direct method. Since the proof is fairly standard we omit it at this point.
\begin{prop} \label{prop:existence}
Let Assumption~\ref{ass:problem} hold. Then there exists at least one solution~$\bar{u}\in \BV$ to~\eqref{def:bvprob}.
\end{prop}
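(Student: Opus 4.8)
\emph{Proof sketch (direct method).} The plan is the standard direct method of the calculus of variations. First I would set $m=\inf_{u\in\BV}j(u)$ and note that $m<+\infty$ since $j(0)=F(0)\in\R$; hence a minimizing sequence $\{u_k\}_{k\in\N}\subset\BV$ with $j(u_k)\to m$ exists. In particular $\{j(u_k)\}_{k\in\N}$ is bounded from above, so the radial unboundedness assumption in Assumption~\ref{ass:problem} forces $\{\bvnorm{u_k}\}_{k\in\N}$ to be bounded: otherwise some subsequence would have $\bvnorm{u_{k_j}}\to\infty$ and thus $j(u_{k_j})\to+\infty$, contradicting boundedness from above. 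By the sequential Banach--Alaoglu theorem recalled in Section~\ref{sec:notation}, we may then extract a (non-relabeled) subsequence and some $\optu$ with $u_k\rightharpoonup^*\optu$ in $\BV$, i.e. $\lonenorm{u_k-\optu}\to 0$ and $u_k'\rightharpoonup^*\optu'$ in $\M(I;\R^d)$.

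Next I would pass to the limit in the two terms of $j$ separately. Since $\mnorm{\cdot}$ is the norm of the dual space $\M(I;\R^d)=\mathcal{C}_0(I;\R^d)^*$, it is weak* lower semicontinuous, so $\mnorm{\optu'}\le\liminf_k\mnorm{u_k'}<\infty$; together with $\optu\in L^1(I;\R^d)$ this confirms $\optu\in\BV$. For the fidelity term the only genuine point is that $K$ is defined on $L^2(I;\R^d)$ while weak* convergence in $\BV$ only delivers convergence in $L^1(I;\R^d)$; this gap is closed by the compact embedding $\BV\hookrightarrow L^2(I;\R^d)$ stated in Section~\ref{sec:notation}. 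Indeed, the bounded sequence $\{u_k\}$ is precompact in $L^2(I;\R^d)$, every $L^2$-convergent subsubsequence must have limit $\optu$ (being simultaneously an $L^1$-limit), and hence the whole sequence satisfies $u_k\to\optu$ strongly in $L^2(I;\R^d)$. Continuity of $K$ yields $Ku_k\to K\optu$ in $Y$, and continuity of $F$ then gives $F(Ku_k)\to F(K\optu)$.

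Finally I would combine the two passages: since $F(Ku_k)$ converges while $\mnorm{u_k'}$ is merely lower semicontinuous,
\begin{align*}
j(\optu)=F(K\optu)+\beta\,\mnorm{\optu'}\le\lim_k F(Ku_k)+\beta\liminf_k\mnorm{u_k'}=\liminf_k j(u_k)=m .
\end{align*}
As $\optu\in\BV$ we also have $j(\optu)\ge m$, whence $j(\optu)=m$ (which in passing shows $m>-\infty$) and $\optu$ is the claimed minimizer.

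I expect the only mildly delicate step to be the upgrade from $L^1$- to $L^2$-convergence of the arguments of $K$, which is where the compact embedding of $\BV$ is used; the remainder is soft functional analysis. Note that strict convexity of $F$ plays no role here (it is relevant only for uniqueness-type statements), and no coercivity of $K$ is needed, since radial unboundedness of $j$ already supplies the boundedness of minimizing sequences.
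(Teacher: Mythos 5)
Your proof is correct and follows exactly the route the paper itself indicates: the paper omits the argument as ``fairly standard'' and simply invokes the direct method, which is what you carry out (radial unboundedness of $j$ to bound a minimizing sequence in $\BV$, weak* compactness and weak* lower semicontinuity of $\mnorm{\cdot}$ for the penalty, and the compact embedding $\BV \hookrightarrow L^2(I;\R^d)$ plus continuity of $K$ and $F$ for the fidelity term). No gaps; your closing remarks that strict convexity of $F$ is not needed here are also consistent with the paper, which uses it only for uniqueness of the optimal observation and dual variable.
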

\subsection{Optimality conditions} \label{subsec:eqref}
The derivation of most subsequent results relies on an equivalent reformulation of~\eqref{def:bvprob} which will be introduced next. Define the linear and continuous operator
\begin{align}\label{def:Boperator}
B \colon \Moc \times \R^d \to \Lzwo, \quad (q, c) \mapsto \int_0^{\cdot} \de q- \frac{1}{T}\int_0^T~ \int_0^s \de q~\de s +c,
\end{align}
where integration hast to be understood in the sense of Bochner. We arrive at the following identification.
\begin{prop} \label{prop:equivalence}
For~all~$(q, c)\in \Moc \times \R^d$ we have~$B(q, c)\in \BV$.
The linear and continuous operator~$B \colon \Moc \times \R^d \to \BV$ from~\eqref{def:Boperator} is an isomorphism.
\end{prop}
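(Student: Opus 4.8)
\medskip

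\noindent\textit{Proof plan.}\
The strategy is to produce the inverse of $B$ explicitly, namely the map $u\mapsto(u',a_u)$, and then to read off the continuity of $B$ and of this inverse from the norm equivalence $C_1(|a_u|_{\R^d}+\mnorm{u'})\le\bvnorm{u}\le C_2(|a_u|_{\R^d}+\mnorm{u'})$ recalled above. First I would verify that $B$ really takes values in $\BV$ and compute $B(q,c)'$. Write $Q_q(t):=\int_0^t\de q=\int_I\mathbf{1}_{\{s<t\}}\,\de q(s)$ for the primitive of $q$. Since $|Q_q(t)|_{\R^d}\le\mnorm{q}$ for every $t\in I$, one has $Q_q\in L^\infty(I;\R^d)$, and because the remaining two summands in~\eqref{def:Boperator} are constant in $t$ this already gives $B(q,c)\in\Lzwo\subset L^1(I;\R^d)$, in agreement with~\eqref{def:Boperator}. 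To identify the distributional derivative, fix $\varphi\in\mathcal{C}^\infty_c(I;\R^d)$; the constant summands pair to zero with $\varphi'$ since $\int_I\varphi'\,\de t=0$, and an application of Fubini's theorem (legitimate because $\varphi'$ is bounded and $q$ is finite) yields
\[
\lzwo{B(q,c)}{\varphi'}=\int_I Q_q(t)\cdot\varphi'(t)\,\de t=\int_I\left(\int_s^T\varphi'(t)\,\de t\right)\de q(s).
\]
Evaluating the inner integral, $\int_s^T\varphi'(t)\,\de t=-\varphi(s)$ because $\varphi$ vanishes near $T$, and comparison with the definition of the weak derivative shows $B(q,c)\in\BV$ with $B(q,c)'=q$.

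The next step is the remark that the additive correction in~\eqref{def:Boperator} is calibrated precisely so that the mean value of $B(q,c)$ equals $c$: averaging $B(q,c)$ over $I$, the mean of $Q_q$ cancels the subtracted constant $\frac1T\int_0^T\int_0^s\de q\,\de s$ and leaves $a_{B(q,c)}=c$. With the identities $B(q,c)'=q$ and $a_{B(q,c)}=c$ in hand, bijectivity is immediate. Injectivity: $B(q,c)=0$ forces $q=B(q,c)'=0$ and $c=a_{B(q,c)}=0$. Surjectivity: given an arbitrary $u\in\BV$, put $q:=u'\in\Moc$, $c:=a_u\in\R^d$, and $w:=u-B(u',a_u)\in\BV$; then $w'=u'-q=0$ and $a_w=a_u-c=0$, hence $\lzwo{w}{\varphi'}=0$ for every $\varphi\in\mathcal{C}^\infty_c(I;\R^d)$, so $w$ is a.e.\ constant by the fundamental lemma of the calculus of variations, and its vanishing mean forces $w=0$ in $L^1(I;\R^d)$, i.e.\ $u=B(u',a_u)$.

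Finally, the two continuity estimates: from the norm equivalence, $\bvnorm{B(q,c)}\le C_2\big(|a_{B(q,c)}|_{\R^d}+\mnorm{B(q,c)'}\big)=C_2\big(|c|_{\R^d}+\mnorm{q}\big)$, so $B\colon\Moc\times\R^d\to\BV$ is bounded, while $\mnorm{u'}+|a_u|_{\R^d}\le C_1^{-1}\bvnorm{u}$ shows that $B^{-1}\colon u\mapsto(u',a_u)$ is bounded as well (alternatively, once bijectivity is known one may simply invoke the bounded inverse theorem, both spaces being Banach). This proves that $B$ is an isomorphism. I expect the only nonroutine points to be the Fubini interchange together with the clean evaluation of $B(q,c)'$, and the appeal to the fundamental theorem of calculus for $\BV$ functions on an interval in the surjectivity step; everything else amounts to bookkeeping with the defining formula~\eqref{def:Boperator}.
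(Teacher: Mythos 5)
Your argument is correct and follows essentially the same route as the paper, whose proof consists precisely of observing that the inverse of $B$ is $u \mapsto (u', a_u)$; you simply supply the verifications the paper leaves implicit (that $B(q,c)' = q$ via Fubini, that $a_{B(q,c)} = c$, bijectivity, and boundedness of $B$ and $B^{-1}$ from the norm equivalence). One cosmetic remark: your computation yields $\lzwo{B(q,c)}{\varphi'} = -\langle \varphi, q\rangle$, which gives $B(q,c)' = q$ under the standard sign convention for the distributional derivative; the paper's stated definition omits the minus sign (a typo, as its own example $\chi_t' = \delta_t$ confirms), so your conclusion matches the intended meaning.
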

\begin{proof}
The bounded invertibility of~$B$ is imminent noting that its inverse is given by the operator
\begin{align*}
B^{-1} \colon \BV \to \Moc \times \R^d  , \quad u \mapsto \left(u', a_u \right).
\end{align*}
\end{proof}
Loosely speaking, the previous result states that any function of bounded variation on~$I$ is uniquely characterized by its distributional derivative and mean values of its components. Thus~\eqref{def:bvprob} is equivalent to the sparse minimization problem
\begin{align} \label{def:sparsebvprop}
\min_{q \in \Moc,~c \in \R^d} \lbrack F(\mathcal{K}(q,c))+\beta \mnorm{q} \rbrack.
\end{align}
where we abbreviate~$\mathcal{K}=K \circ B$.

Next we characterize the adjoint operator~$B^*$. Consider the system of auxiliary ordinary differential equations
\begin{align} \label{def:auxode}
-\om''= \phi \quad \text{in}~(0, T), \quad \om'(0)=\om'(T)=0, \quad \int^T_{0} \om (t)~\de t=0,
\end{align}
where~$\phi\in L^2(I;\R^d) $ with~$a_\phi =0$. Clearly, this problem admits a unique solution~$\om \in H^2(I;\R^d) \hookrightarrow \Cc^1(I;\R^d)$ and~$\om' \in \Cc_0(I;\R^d)$.
\begin{lemma} \label{lem:charaofadjB}
The linear and continuous operator~$B$ from~\eqref{def:Boperator} is the Banach space adjoint of
\begin{align}
B^* \colon \Lzwo \to \mathcal{C}_0(I;\R^d)\times  \R^d, \quad \varphi \mapsto \left( \om', \int_0^T \varphi(s) ~\de s \right).
\end{align}
where~$\om \in \Cc^1(I;\R^d) $ fulfills~\eqref{def:auxode} for~$\phi= \varphi-a_\varphi$.
\end{lemma}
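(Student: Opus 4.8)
The plan is to reduce the statement to verifying the single bilinear identity
\begin{align*}
\lzwo{B(q,c)}{\varphi} = \langle \om', q\rangle + \scalarrd{c}{\int_0^T \varphi(s)\,\de s} \qquad \forall\, \varphi \in \Lzwo,\ (q,c) \in \Moc\times \R^d,
\end{align*}
where $\om$ solves~\eqref{def:auxode} for $\phi = \varphi - a_\varphi$ (note $a_{\varphi-a_\varphi}=0$, so $\phi$ is admissible). Indeed, since $\bigl(\mathcal{C}_0(I;\R^d)\times\R^d\bigr)^*$ is canonically identified with $\Moc\times\R^d$ and its duality pairing with the latter is precisely the right-hand side above, while $\Lzwo$ is a Hilbert space, this identity is exactly the assertion that the map $\varphi\mapsto(\om',\int_0^T\varphi(s)\,\de s)$ is a well-defined operator into $\mathcal{C}_0(I;\R^d)\times\R^d$ whose Banach space adjoint equals $B$. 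Its linearity and continuity follow from boundedness of the solution operator of~\eqref{def:auxode} from $L^2(I;\R^d)$ into $H^2(I;\R^d)$, the embedding $H^2(I;\R^d)\hookrightarrow\Cc^1(I;\R^d)$ together with the homogeneous Neumann conditions (which yield $\om'\in\Cc_0(I;\R^d)$), and boundedness of $\varphi\mapsto a_\varphi$ and $\varphi\mapsto\int_0^T\varphi(s)\,\de s$.

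To establish the identity I would first expand the left-hand side. Writing $Q = \int_0^\cdot\de q$, one has $Q\in\BV$ with $Q' = q$ and $Q$ bounded (hence $Q\in\Lzwo$), and by~\eqref{def:Boperator} $B(q,c) = Q - a_Q + c$. Decomposing $\varphi = \phi + a_\varphi$ and expanding $\lzwo{B(q,c)}{\varphi}$, the terms coupling $a_Q$ and $a_\varphi$ cancel: using $\int_0^T Q(t)\,\de t = T a_Q$ and $\int_0^T\varphi(t)\,\de t = T a_\varphi$ one gets $\int_0^T\scalarrd{Q(t)}{a_\varphi}\,\de t = T\scalarrd{a_Q}{a_\varphi} = \int_0^T\scalarrd{a_Q}{\varphi(t)}\,\de t$. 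This leaves
\begin{align*}
\lzwo{B(q,c)}{\varphi} = \int_0^T \scalarrd{Q(t)}{\phi(t)}\,\de t + \scalarrd{c}{\int_0^T\varphi(s)\,\de s},
\end{align*}
whose second summand already matches the $\R^d$-component of the claimed formula.

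It remains to show $\int_0^T\scalarrd{Q(t)}{\phi(t)}\,\de t = \langle \om', q\rangle$. Here I use $\phi = -\om''$ and integrate by parts: since $\om'\in H^1(I;\R^d)\cap\Cc_0(I;\R^d)$ and $Q\in\BV$ with $Q' = q$, the integration-by-parts formula for BV functions gives
\begin{align*}
\int_0^T\scalarrd{Q(t)}{\phi(t)}\,\de t = -\int_0^T\scalarrd{Q(t)}{\om''(t)}\,\de t = -\Big[\scalarrd{Q(t)}{\om'(t)}\Big]_0^T + \langle\om', q\rangle = \langle\om', q\rangle,
\end{align*}
since $\om'(0) = \om'(T) = 0$ kills the boundary contribution. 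Combining the last three displays proves the identity, hence the claim.

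The only genuinely delicate point is justifying the integration by parts for the non-smooth $Q$; this can be handled either by invoking a standard BV integration-by-parts result, e.g. as in~\cite{ambrosio}, or by mollifying $q$ and passing to the limit, with the boundary terms vanishing thanks to the homogeneous Neumann conditions on $\om'$ irrespective of the (possibly nonzero) one-sided limits of $Q$ at $0$ and $T$. The cancellation of the mean-value terms and the bookkeeping of the canonical dual identifications are then routine.
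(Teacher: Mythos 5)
Your proposal is correct and follows essentially the same route as the paper: both reduce the claim to the duality identity $\langle \om',q\rangle + (c,\int_0^T\varphi\,\de s)_{\R^d} = \lzwo{\varphi}{B(q,c)}$, which is verified by expanding $B(q,c)$, cancelling the mean-value terms, and integrating by parts against $-\om''=\varphi-a_\varphi$ with the boundary terms killed by $\om'(0)=\om'(T)=0$. Your version merely runs the computation from the $L^2$ side instead of the dual-pairing side and spells out the BV integration-by-parts justification that the paper leaves implicit.
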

\begin{proof}
Obviously, the operator~$B^*$ is linearly and continuous.
Let~$\varphi \in \Lzwo$ and a pair~$(q,c)\in \Moc \times \R^d$ be given. We readily obtain
\begin{align*}
 \langle \om', q \rangle + \left(c, \int^T_0 \varphi(t) ~\de t \right) &= \int_0^T \left(\varphi(s), \int_0^s~\de q \right)~\de s-\left( a_\varphi, \int_0^T \int_0^s~\de q~\de s\right)+\left( c ,\int^T_0 \varphi(t) ~\de t \right)
\\&= \lzwo{\varphi}{B(q,0)}+\left(c, \int^T_0 \varphi(t) ~\de t \right)
\\&= \lzwo{\varphi}{B(q,c)}.
\end{align*}
Here we used~$B(q,0)\in \BV$ with~$B(q,0)'=q$ as well as the integration by parts in the second equality. This establishes the result.
\end{proof}
Combining the equivalence of~\eqref{def:bvprob} and~\eqref{def:sparsebvprop} as well as the characterization of $B^*$ we arrive at the following necessary and sufficient first order optimality conditions.
\begin{theorem} \label{thm:firstorderoptimality}
Let~$\optu \in \BV$ be given. Further define
\begin{align*}
\bar{p}(t)= \int_0^t K^* \nabla F(K \bar{u})(s)~\de s \in \mathcal{C}(\bar{I}; \R^d)
\end{align*}
Then~$\optu$ is an optimal solution to~\eqref{def:bvprob} if and only if
\begin{align}
\cnorm{\bar{p}}\in \begin{cases} \{\beta\} & \bar{u}' \neq 0  \\ [0,\beta] & \text{else} \end{cases}, \quad \bar{p}(T) =0
\end{align}
as well as
\begin{align} \label{eq:poshom}
\langle \bar{p}, \optu' \rangle= \beta \mnorm{\optu'}.
\end{align}
\end{theorem}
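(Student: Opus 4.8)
The plan is to deduce the first-order optimality conditions for \eqref{def:bvprob} by transferring everything to the equivalent sparse problem \eqref{def:sparsebvprop} via the isomorphism $B$, writing down the standard subdifferential condition there, and then translating it back through $B^*$. First I would invoke Proposition~\ref{prop:equivalence}: $\optu$ solves \eqref{def:bvprob} if and only if $(\optu', a_{\optu})$ solves \eqref{def:sparsebvprop}. Since $F$ is convex and continuously Fr\'echet differentiable and $\mnorm{\cdot}$ is convex, the latter is a convex problem, so $(\optq,\optc)=(\optu',a_{\optu})$ is optimal if and only if $0$ lies in the subdifferential of $(q,c)\mapsto F(\mathcal{K}(q,c))+\beta\mnorm{q}$ at $(\optq,\optc)$. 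Splitting the smooth and nonsmooth parts, this reads
\begin{align*}
-\mathcal{K}^*\nabla F(\mathcal{K}(\optq,\optc)) \in \partial\big(\beta\mnorm{\cdot}\big)(\optq)\times\{0\},
\end{align*}
where $\mathcal{K}^* = B^*K^*$. The second component gives $\int_0^T K^*\nabla F(K\optu)(s)\,\de s = 0$, i.e. $\optp(T)=0$ with $\optp$ as defined in the statement. For the first component I would use Lemma~\ref{lem:charaofadjB} to identify the $\Moc$-component of $\mathcal{K}^*\nabla F(K\optu)$ as $\om'$, where $-\om''=\varphi-a_\varphi$ for $\varphi = K^*\nabla F(K\optu)$; a direct computation shows $\om' = -\optp + a_{\optp}$ up to the constant fixed by $\om'(T)=0$, and after using $\optp(T)=0$ one obtains precisely $\om' = -\optp$. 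Hence the first-order condition becomes $\optp \in \partial(\beta\mnorm{\cdot})(\optu')$ in $\mathcal{C}_0(I;\R^d)$.

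It then remains to unpack the subdifferential of the total variation norm on measures. The convex conjugate of $\beta\mnorm{\cdot}$ is the indicator of the ball $\{\varphi \in \mathcal{C}_0(I;\R^d) : \cnorm{\varphi}\le\beta\}$, so the inclusion $\optp\in\partial(\beta\mnorm{\cdot})(\optu')$ is equivalent to the pair of conditions $\cnorm{\optp}\le\beta$ together with the extremality relation $\langle\optp,\optu'\rangle = \beta\mnorm{\optu'}$, which is exactly \eqref{eq:poshom}. Finally, $\cnorm{\optp}=\beta$ whenever $\optu'\neq 0$ follows from \eqref{eq:poshom}: if $\cnorm{\optp}<\beta$ then $\langle\optp,\optu'\rangle\le\cnorm{\optp}\mnorm{\optu'}<\beta\mnorm{\optu'}$, a contradiction unless $\optu'=0$; when $\optu'=0$ only $\cnorm{\optp}\le\beta$ survives, giving the case distinction in the statement. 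The converse direction is immediate by reversing these equivalences, since each step is an "if and only if".

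The main obstacle is the careful bookkeeping in the second paragraph: correctly identifying $\om'$ with $-\optp$ using the boundary and zero-mean normalizations in \eqref{def:auxode} and the condition $\optp(T)=0$, and making sure the mean-value constant $a_\varphi$ is handled consistently between the definition of $B^*$ and the definition of $\optp$. I would verify this by differentiating: $\optp' = K^*\nabla F(K\optu) = \varphi$ a.e., while $-\om'' = \varphi - a_\varphi$, so $(\optp + \om')'$ is the constant $a_\varphi$; evaluating at an appropriate point and using $\om'(0)=\om'(T)=0$ pins down the constant, and $\optp(T)=0$ then forces $\optp = -\om'$ on all of $\bar I$. Everything else is a routine application of convex duality and of the two structural results (Proposition~\ref{prop:equivalence}, Lemma~\ref{lem:charaofadjB}) already established.
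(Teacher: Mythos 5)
Your proposal is correct and follows essentially the same route as the paper: pass to the equivalent problem \eqref{def:sparsebvprop} via Proposition~\ref{prop:equivalence}, write the convex first-order conditions, use Lemma~\ref{lem:charaofadjB} to identify the measure-component of $\mathcal{K}^*\nabla F$ with $\om'$ (and hence, after fixing the constant via the boundary conditions and $\bar{p}(T)=0$, with $\pm\bar{p}$), and unpack the subdifferential of $\beta\mnorm{\cdot}$ into the norm bound and the extremality relation \eqref{eq:poshom}. The only difference is the placement of the minus sign (the paper takes $\varphi=-K^*\nabla F(K\bar{u})$ so that $\optom'=\bar{p}$, while you take $\varphi=+K^*\nabla F(K\bar{u})$ and get $\om'=-\bar{p}$), which is immaterial.
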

\begin{proof}
A~function~$\optu \in \BV$ is an optimal solution to~\eqref{def:bvprob} if and only if the pair
\begin{align*}
(\bar{q},\bar{c})=\left(\optu', \frac{1}{T} \int_0^T \optu(s) ~\de s\right)
\end{align*}
is a minimizer of~\eqref{def:sparsebvprop}. Since~$J$ is convex and~$F$ is Fr\'echet-differentiable, optimality of~$(\bar{q}, \bar{c})$ is equivalent to
\begin{align} \label{eq:optcond1}
(-\nabla F(\Kb(\bar{q},\bar{c})),\Kb(q-\bar{q},0))_{\Lzwo}+G(\mnorm{\bar{q}})\leq G(\mnorm{q}) , \quad \forall  q \in \Moc
\end{align}
as well as
\begin{align} \label{eq:optcond2}
(-\nabla F(\Kb(\optq,\bar{c})),\Kb(0, \delta c ))_{\Lzwo}=0 \quad \forall \delta c \in \R.
\end{align}
Let~$\optom \in \mathcal{C}^1(\bar{I};\R^d)$ denote the solution of~\eqref{def:auxode} for~$\varphi=- K^* \nabla F(K \bar{u})\in L^2(I)$.
Note that~$\Kb^*= B^*K^*$. Utilizing the characterization of~$B^*$, see Lemma~\ref{lem:charaofadjB}, as well as the definition of the convex subdifferential the conditions~\eqref{eq:optcond1} and~\eqref{eq:optcond2}, respectively, can be rewritten as
\begin{align} \label{eq:constiszero}
\optom' \in \beta \partial \mnorm{\bar{q}}, \quad \bar{p}(T)=\int_0^T - K^* \nabla F(K \bar{u}) (t)~\de t=0.
\end{align}
It is well known, that the subdifferential inclusion is equivalent to
\begin{align*}
\cnorm{\optom'}\in \begin{cases} \{\beta\} & \bar{q} \neq 0  \\ [0,\beta] & \text{else} \end{cases}, \quad \langle \optom',\bar{q}\rangle  = \beta \mnorm{\bar{q}}.
\end{align*}
Due to the fundamental theorem of analysis, there exists a vector~$c\in \R^d$ with
\begin{align*}
\optom'(t) &= \int_0^t K^* \nabla F(K \bar{u})(s)~\de s -t \int_0^T K^* \nabla F(K \bar{u})(s)~\de s +c \\
&=\int_0^t K^* \nabla F(K \bar{u})(s)~\de s +c
\end{align*}
for all~$t \in \bar{I}$. From~$\optom'(0)=0$ we deduce~$c=0$. Thus we conclude~$\bar{p}=\optom'$ on~$\bar{I}$.
Combining all the previous observations now finishes the proof.
\end{proof}
It is by now well-known that the extremality condition in~\eqref{eq:poshom} ensures the sparsity of~$\optu'$ if the dual variable~$\bar{p}$ only admits finitely many global extrema.
\begin{coroll} \label{coroll:suppcond}
Let~$\optu \in \BV$ be a minimizer of~\eqref{def:bvprob} and let~$\bar{p}$ be defined as in Theorem~\ref{thm:firstorderoptimality}. Assume that
\begin{align} \label{eq:suppcond}
 \{\bar{t}_i\}^N_{i=1}= \left \{\,t \in I\;|\;|\bar{p}(t)|_{\R^d}= \beta\,\right\}
\end{align}
for some~$N\in\N$ and~$\{\bar{t}_i\}^N_{i=1}\subset I$. Then~$\optu' \in\Moc$ is of the form
\begin{align*}
\optu'=\sum^N_{i=1} \bar{\mu}_i \bar{v}_i~\text{where}~\bar{v}_i=(\bar{p}(\bar{t}_i)/\beta) \delta_{\bar{t}_i}
\end{align*}
i.e.~$\bar{u}$ is piecewise constant on~$I$.
\end{coroll}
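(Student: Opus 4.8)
The plan is to read off the support of~$\optu'$ from the extremality condition~\eqref{eq:poshom} via a polar decomposition of the measure~$\optu'$. If~$\optu'=0$ there is nothing to prove (take all~$\bar\mu_i=0$), so assume~$\optu'\neq 0$; then Theorem~\ref{thm:firstorderoptimality} gives~$\cnorm{\bar p}=\beta$, and in particular~$N\geq 1$. Write~$\optu'=\sigma\,|\optu'|$ for the polar decomposition, where~$\sigma\in L^1(I,|\optu'|;\R^d)$ satisfies~$|\sigma(t)|_{\R^d}=1$ for~$|\optu'|$-a.e.~$t$. Using the definition of the duality pairing, the Cauchy--Schwarz inequality and~$|\bar p(t)|_{\R^d}\leq\cnorm{\bar p}=\beta$ we obtain
\begin{align*}
\beta\mnorm{\optu'}=\langle \bar p,\optu'\rangle=\int_I(\bar p(t),\sigma(t))_{\R^d}\,\de|\optu'|(t)\leq\int_I|\bar p(t)|_{\R^d}\,\de|\optu'|(t)\leq\beta\,|\optu'|(I)=\beta\mnorm{\optu'},
\end{align*}
so that both inequalities are in fact equalities.

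Equality in the second inequality forces~$|\bar p(t)|_{\R^d}=\beta$ for~$|\optu'|$-a.e.~$t$, i.e.~$|\optu'|$ is concentrated on~$\{t\in I:|\bar p(t)|_{\R^d}=\beta\}$, which by~\eqref{eq:suppcond} is the finite set~$\{\bar t_i\}^N_{i=1}$. A nonnegative finite measure carried by finitely many points is a finite sum of point masses, hence~$\optu'=\sum^N_{i=1}\mathbf q_i\,\delta_{\bar t_i}$ for some~$\mathbf q_i\in\R^d$; consequently~$|\optu'|=\sum^N_{i=1}|\mathbf q_i|_{\R^d}\,\delta_{\bar t_i}$ and~$\sigma(\bar t_i)=\mathbf q_i/|\mathbf q_i|_{\R^d}$ whenever~$\mathbf q_i\neq 0$. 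Equality in the first inequality means~$(\bar p(t),\sigma(t))_{\R^d}=|\bar p(t)|_{\R^d}$ for~$|\optu'|$-a.e.~$t$; evaluated at an atom~$\bar t_i$ with~$\mathbf q_i\neq 0$ this is the equality case of Cauchy--Schwarz, so~$\mathbf q_i$ is a nonnegative multiple of~$\bar p(\bar t_i)$ and, since~$|\bar p(\bar t_i)|_{\R^d}=\beta$, in fact~$\mathbf q_i=|\mathbf q_i|_{\R^d}\,\bar p(\bar t_i)/\beta$. Putting~$\bar\mu_i=|\mathbf q_i|_{\R^d}$ (and~$\bar\mu_i=0$ when~$\mathbf q_i=0$) and~$\bar v_i=(\bar p(\bar t_i)/\beta)\,\delta_{\bar t_i}$ yields the asserted form~$\optu'=\sum^N_{i=1}\bar\mu_i\bar v_i$.

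It remains to note that~$\optu$ is piecewise constant. Since~$\optu'$ is a finite linear combination of Dirac measures, the identity~$\optu=B(\optu',a_{\optu})$ from Proposition~\ref{prop:equivalence} (equivalently, the relations~$\chi'_{\bar t_i}=\delta_{\bar t_i}$) shows that~$\optu$ has vanishing distributional derivative on each connected component of~$I\setminus\{\bar t_i\}^N_{i=1}$ and is therefore constant on each such subinterval.

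The step I expect to be the crux is passing from the single scalar identity~\eqref{eq:poshom} to the two pointwise~$|\optu'|$-a.e. statements. This relies on the polar (Radon--Nikodym) decomposition of~$\R^d$-valued measures together with the elementary fact that~$\int f\,\de\mu=\int g\,\de\mu$ with~$f\leq g$~$\mu$-a.e. forces~$f=g$~$\mu$-a.e.; once~$\supp|\optu'|$ has been confined to a finite set, the discrete structure of~$\optu'$ and the identification of the atom directions with~$\bar p(\bar t_i)/\beta$ are immediate.
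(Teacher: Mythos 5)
Your argument is correct: the polar decomposition $\optu'=\sigma|\optu'|$ combined with the extremality identity $\langle\bar p,\optu'\rangle=\beta\mnorm{\optu'}$ and $|\bar p(t)|_{\R^d}\leq\cnorm{\bar p}=\beta$ forces $|\optu'|$ to concentrate on the finite set $\{\bar t_i\}_{i=1}^N$ and the equality case of Cauchy--Schwarz pins the atom directions to $\bar p(\bar t_i)/\beta$, which is precisely the standard argument the paper delegates to its citation of \cite[Corollary~6.25]{walter}. So you have, in effect, written out in full the same proof the paper invokes by reference; no gaps, including the trivial case $\optu'=0$ and the final passage from a purely atomic derivative to piecewise constancy.
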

\begin{proof}
This can be proven analogously to~\cite[Corollary 6.25]{walter}.
\end{proof}
Finally we point out that the optimal observation~$\bar{y}\in Y$ in~\eqref{def:bvprob} and thus also the dual variable~$\bar{p} \in \mathcal{C}_0(I;\R^d)$, see Theorem~\ref{thm:firstorderoptimality}, are unique.
\begin{coroll}
Let~$\optu_1, \optu_2 \in \BV $ denote two minimizers to~\eqref{def:bvprob}. Moreover denote by~$\bar{y}_1 =K \optu_1,~\bar{y}_2 =K \optu_2$ and~$\bar{p}_1,\bar{p}_2 \in \mathcal{C}_0(I;\R^d)$ the associated observations and dual variables, see Theorem~\ref{thm:firstorderoptimality}, respectively. Then~$\bar{y}_1=\bar{y}_2$ and~$\bar{p}_1=\bar{p}_2$.
\end{coroll}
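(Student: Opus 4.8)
The plan is to leverage the strict convexity of~$F$ together with the convexity of the solution set of~\eqref{def:bvprob}. First I would record that the set of minimizers of~\eqref{def:bvprob} is convex: since~$\BV$ is a linear space and~$j$ is convex (as~$F$ is convex, $K$ is linear, and~$u \mapsto \mnorm{u'}$ is a seminorm on~$\BV$), for any~$\theta \in [0,1]$ the convex combination~$\optu_\theta := \theta \optu_1 + (1-\theta) \optu_2$ again lies in~$\BV$ and satisfies~$j(\optu_\theta) = \min_{u \in \BV} j(u) =: j^*$.

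Next I would argue by contradiction that~$\bar{y}_1 = \bar{y}_2$. Assume~$\bar{y}_1 = K\optu_1 \neq K\optu_2 = \bar{y}_2$ and fix~$\theta \in (0,1)$. By linearity of~$K$ we have~$K\optu_\theta = \theta \bar{y}_1 + (1-\theta) \bar{y}_2$, and strict convexity of~$F$ gives the strict inequality~$F(K\optu_\theta) < \theta F(\bar{y}_1) + (1-\theta) F(\bar{y}_2)$. Combining this with the triangle inequality~$\mnorm{\optu_\theta'} \leq \theta \mnorm{\optu_1'} + (1-\theta) \mnorm{\optu_2'}$ yields
\begin{align*}
j(\optu_\theta) < \theta\, j(\optu_1) + (1-\theta)\, j(\optu_2) = j^*,
\end{align*}
which contradicts the fact that~$j^*$ is the minimal value. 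Hence~$\bar{y}_1 = \bar{y}_2$.

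Finally, since~$F$ is continuously Fr\'echet differentiable, $\bar{y}_1 = \bar{y}_2$ immediately implies~$\nabla F(\bar{y}_1) = \nabla F(\bar{y}_2)$, so that~$K^* \nabla F(K\optu_1) = K^* \nabla F(K\optu_2)$ in~$L^2(I;\R^d)$. Plugging this into the definition of the dual variable from Theorem~\ref{thm:firstorderoptimality}, i.e.~$\bar{p}_i(t) = \int_0^t K^* \nabla F(K\optu_i)(s)~\de s$, gives~$\bar{p}_1 = \bar{p}_2$ on~$\bar{I}$. I do not expect a genuine obstacle here; the only point meriting a line of justification is the convexity of the minimizer set used in the first step, which is entirely routine.
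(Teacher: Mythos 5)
Your proposal is correct and follows essentially the same route as the paper, which simply invokes the strict convexity of~$F$; you have merely spelled out the standard convex-combination argument that the paper leaves implicit. The final step, that~$\bar{y}_1=\bar{y}_2$ forces~$\bar{p}_1=\bar{p}_2$ via the formula~$\bar{p}_i(t)=\int_0^t K^*\nabla F(\bar{y}_i)(s)\,\de s$, matches the paper's reasoning as well.
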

\begin{proof}
The uniqueness of the optimal observation, and thus also that of the dual variable, directly follows from the strict convexity of~$F$.
\end{proof}
\section{Algorithmic solution} \label{sec:algsol}
This section is devoted to the description of an efficient solution algorithm for~\eqref{def:bvprob}. The method we propose relies on the iterative update of a finite active set~$\mathcal{A}_k=\{\mu^k_i, v^k_i\}^{N_k}_{i=1}$ comprised of "jumps" $v^k_i \in \Moc$ and the associated "magnitudes"~$\mu^k_i > 0$. Each jump is of the form~$v^k_i= \mathbf{v}^k_i \delta_{t^k_i}$ for a position~$t^k_i \in I$  and a direction~$\mathbf{v}^k_i \in \R^d$,~$|\mathbf{v}^k_i|_{\R^d}=1$. Given an offset~$c^k\in \R^d$ the~$k-th$ iterate is defined as
\begin{align} \label{def:formofuk}
u_k=B \left( \sum^{N_k}_{i=1} \mu^k_i v^k_i, c^k \right).
\end{align}
If~$\mathcal{A}_k= \emptyset$, i.e.~$u^k=c^k \chi_0$, we adopt the convention~$N_k=0$.
We shortly describe the individual steps of the algorithm in the following. A summary can be found in Algorithm~\ref{alg:accgcg}. Given the current active set~$\mathcal{A}_k$ and iterate~$u_k$ we first compute the current dual variable~$p_k(\cdot)=\int^\cdot_0 K^*\nabla F(Ku_k)~\de s$ as well one of its global extrema~$\hat{t}_k \in I$. Next, assuming that~$\cnorm{p_k} >0$, see Proposition~\ref{prop:optimalitysub}, we define the new candidate jump~$\widehat{v}^k \coloneqq (p_k(\hat{t}_k)/\cnorm{p_k})\delta_{\hat{t}_k}$ and find improved jump heights~$\mu^{k+1/2} \in \R^{N_k +1}_+$ and a new offset~$c^{k+1} \in \R^d$ from solving
\begin{align} \label{def:subprobjumps}
\min_{(\mu,c) \in \R^{N_k +1}_+ \times \R^d} \left \lbrack F\left( \mathcal{K} \left( \mu_{N_k +1} \widehat{v}^k+ \sum^{N_k}_{i=1} \mu_i v^k_i, c \right) \right)+ \beta \sum^{N_k +1}_{i=1} \mu_i \right \rbrack. \tag{$\mathcal{P}_{\mathcal{A}_k}$}
\end{align}
This represents a finite-dimensional convex minimization problem with box constraints which can be tackled by a variety of efficient solution algorithms. Now the new jump is added to the active set and the jump heights are updated setting
\begin{align*}
\mathcal{A}_{k+1/2} \coloneqq (\mu^{k+1/2}_{N_k +1}, \widehat{v}^k) \cup \left\{(\mu^{k+1/2}_i,v^k_i)\right\}^{N_k}_{i=1}.
\end{align*}
Finally we prune the active set by removing all jumps whose associated jump magnitude was set to zero i.e.
\begin{align*}
\mathcal{A}_{k+1} \coloneqq \left\{\mu^{k+1}_i, v^{k+1}_i\right\}^{N_{k+1}}_{i=1}=\left\{\,(\mu,v)\in \mathcal{A}_{k+1/2}\;|\;\mu>0\,\right\}
\end{align*}
and increment the iteration counter~$k$ by one.

\begin{algorithm}
\begin{flushleft}
\hspace*{\algorithmicindent} \textbf{Input:} Active set~$\mathcal{A}_0=\{(\mu^0_i,v^0_i)\}^{N_0}_{i=1}$, iterate~$u_0=B \left( \sum^{N_0}_{i=1} \mu^0_i v^0_i, c^0 \right)$ \\
\hspace*{\algorithmicindent} \textbf{Output:} Minimizer~$\optu$ to~\eqref{def:bvprob}.
\end{flushleft}
\begin{algorithmic}
\STATE
\STATE 1. Find~$(\mu^{1/2},c^1)\in\R^{N_0}_+ \times \R^d$ by solving
\begin{align*}
\min_{(\mu,c) \in \R^{N_0}_+ \times \R^d} \left \lbrack F\left( \mathcal{K} \left( \sum^{N_0}_{i=1} \mu_i v^0_i, c \right) \right)+ \beta \sum^{N_0 }_{i=1} \mu_i \right \rbrack.
\end{align*}
\STATE 2. Prune active set and update iterate:
\begin{align*}
\mathcal{A}_1 =\left\{\,(\mu^{1}_i,v^1_i)\right\}^{N_1}_{i=1}=\left\{\,(\mu^{1/2}_i,v^0_i )\;|\;\mu^{1/2}_i>0\,\right\},~u_1=B \left( \sum^{N_0}_{i=1} \mu^1_i v^1_i, c^1 \right).
\end{align*}
\FOR {$k=1,2,\dots$}
\STATE
\STATE 3. Compute~$p_k \in \mathcal{C}_0(I;\R^d)$ and~$\hat{t}_k\in I$ with
\begin{align*}
p_k=\int^\cdot_0 K^* \nabla F(Ku_k)(s)~\de s, \quad |p_k(\hat{t}_k)|_{\R^d}=\cnorm{p_k}= \max_{t \in I} |p_k(t)|_{\R^d}.
\end{align*}
\IF{$\cnorm{p_k}\leq \beta$}
\STATE
\STATE 4. Terminate with~$ \bar{u}=u_k$ a minimizer to~\eqref{def:bvprob}.
 \STATE
\ENDIF
\STATE
\STATE 5. Find~$ (\mu^{k+1/2},c^{k+1})\in\R^{N_k +1}_+ \times \R^d$ from~\eqref{def:subprobjumps} for $\widehat{v}^k= (p_k(\hat{t}_k)/\cnorm{p_k}) \delta_{\hat{t}_k}$.
\STATE
\STATE 6. Update active set:
\begin{align*}
\mathcal{A}_{k+1/2} \coloneqq (\mu^{k+1/2}_{N_k +1}, \widehat{v}^k) \cup \left\{(\mu^{k+1/2}_i,v^k_i)\right\}^{N_k}_{i=1}.
\end{align*}
\STATE 7. Prune active set and update iterate
\begin{align*}
\mathcal{A}_{k+1} &\coloneqq \left\{\mu^{k+1}_i, v^{k+1}_i\right\}^{N_{k+1}}_{i=1}= \left\{\,(\mu,v)\in \mathcal{A}_{k+1/2}\;|\;\mu>0\,\right\},\\ u_{k+1}&=B \left( \sum^{N_{k+1}}_{i=1} \mu^{k+1}_i v^{k+1}_i, c^{k+1} \right) .
\end{align*}
and set~$k=k+1$.
\ENDFOR
\end{algorithmic}
\caption{Primal-dual-active-jump method (PDAJ) for~\eqref{def:bvprob}}
\label{alg:accgcg}
\end{algorithm}
We point out that the termination criterion of Algorithm~\ref{alg:accgcg} relies on the norm of~$p_k$. This is justified by the following proposition.
\begin{prop} \label{prop:optimalitysub}
Denote by
\begin{align*}
\mathcal{A}_k = \left\{(\mu^k_i,v^k_i)\right\}^{N_k}_{i=1}= \left\{(\mu^k_i,\mathbf{v}^k_i \delta_{t^k_i})\right\}^{N_k}_{i=1},~ u_k= B \left( \sum^{N_k}_{i=1} \mu^k_i v^k_i, c^k \right)
\end{align*}
the sequences of active sets and iterates generated by Algorithm~\ref{alg:accgcg}. Moreover set~$p_k(\cdot)= \int^\cdot_0 K^* \nabla F(Ku_k)~\mathrm{d}s $. Then there holds~$p_k \in \mathcal{C}_0(I;\R^d)$
as well as
\begin{align*}
\langle p_k, v^k_i \rangle=(p_k(t^k_i),\mathbf{v}^k_i)_{\R^d}= \beta.
\end{align*}
In particular,~$\langle p_k, u'_k \rangle= \beta \sum^{N_k}_{i=1} \mu^k_i$ and~$\cnorm{p_k}\geq \beta$ if~$\mathcal{A}_k \neq \emptyset$. Moreover, if~$\cnorm{p_k}\leq \beta$ then~$u_k$ is a minimizer to~\eqref{def:bvprob}. In particular this holds if~$(\widehat{\mu},\widehat{v}^k) \in \mathcal{A}_k$ for some~$\widehat{\mu}>0$.
\end{prop}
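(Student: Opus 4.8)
The plan is to use that every iterate produced by Algorithm~\ref{alg:accgcg} arises as~$u_k=B\left(\sum_{i=1}^{N_k}\mu^k_i v^k_i,c^k\right)$, where the vector of jump heights together with the offset~$c^k$ is a minimizer of a finite-dimensional, box-constrained \emph{convex} subproblem --- the problem in Step~1 for~$k=1$, respectively~\eqref{def:subprobjumps} for~$k\geq 2$. Two structural facts are essential: after the pruning in Step~2/Step~7 all surviving heights~$\mu^k_i$ are \emph{strictly} positive, so the corresponding first-order conditions are equalities, not inequalities; and the discarded jumps carry zero height and thus contribute nothing to~$\sum_i\mu^k_i v^k_i$, so that~$Ku_k$ coincides with the point at which~$\nabla F$ is evaluated in the optimality system of the subproblem. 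From that system I would extract two ingredients: stationarity with respect to the (unconstrained) offset~$c$, and stationarity with respect to each strictly positive height.

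Continuity of~$p_k$ on~$\bar I$ with~$p_k(0)=0$ is immediate since~$K^*\nabla F(Ku_k)\in\Lzwo\hookrightarrow L^1(I;\R^d)$, so~$p_k$ is absolutely continuous. For~$p_k(T)=0$ I would use offset stationarity: by~\eqref{def:Boperator} the function~$B(0,\delta c)$ is the constant~$\delta c$, hence the derivative of the subproblem objective in the direction~$\delta c\in\R^d$ equals~$(K^*\nabla F(Ku_k),\delta c)_{\Lzwo}=\left(p_k(T),\delta c\right)_{\R^d}$, and vanishing for all~$\delta c$ forces~$p_k(T)=0$. Thus~$p_k\in\mathcal{C}_0(I;\R^d)$.

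For the identity~$\langle p_k,v^k_i\rangle=\beta$ at a surviving jump, height stationarity reads~$(K^*\nabla F(Ku_k),B(v^k_i,0))_{\Lzwo}+\beta=0$. I would rewrite the first term through the adjoint: by Lemma~\ref{lem:charaofadjB} it equals~$\langle\om',v^k_i\rangle$, where~$\om$ solves~\eqref{def:auxode} with right-hand side~$\phi=K^*\nabla F(Ku_k)-a$ and~$a$ the mean value of~$K^*\nabla F(Ku_k)$. Since we have \emph{already} shown~$p_k(T)=0$ and~$a=p_k(T)/T$, the mean~$a$ vanishes, and the same fundamental-theorem-of-calculus computation as in the proof of Theorem~\ref{thm:firstorderoptimality}, together with~$\om'(0)=0$, identifies~$\om'=-p_k$. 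Hence height stationarity becomes~$-\langle p_k,v^k_i\rangle+\beta=0$, i.e.~$\langle p_k,v^k_i\rangle=\left(p_k(t^k_i),\mathbf{v}^k_i\right)_{\R^d}=\beta$. Because~$u'_k=\sum_i\mu^k_i v^k_i$ (as~$B(q,c)'=q$), summation gives~$\langle p_k,u'_k\rangle=\beta\sum_i\mu^k_i$; and if~$\mathcal{A}_k\neq\emptyset$, evaluating at any active node and using~$|\mathbf{v}^k_i|_{\R^d}=1$ with the Cauchy--Schwarz inequality yields~$\cnorm{p_k}\geq|p_k(t^k_i)|_{\R^d}\geq\left(p_k(t^k_i),\mathbf{v}^k_i\right)_{\R^d}=\beta$.

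It remains to verify optimality. If~$\cnorm{p_k}\leq\beta$, then~$p_k(T)=0$ holds; if~$u'_k\neq 0$ then~$\mathcal{A}_k\neq\emptyset$ and the previous step forces~$\cnorm{p_k}=\beta$, while if~$u'_k=0$ then~$\cnorm{p_k}\in[0,\beta]$ trivially; and the chain
\[
\beta\sum_i\mu^k_i=\langle p_k,u'_k\rangle\leq\cnorm{p_k}\,\mnorm{u'_k}\leq\beta\,\mnorm{u'_k}\leq\beta\sum_i\mu^k_i,
\]
whose last inequality is the triangle inequality~$\mnorm{\sum_i\mu^k_i\mathbf{v}^k_i\delta_{t^k_i}}\leq\sum_i\mu^k_i$, collapses to equalities, so in particular~$\langle p_k,u'_k\rangle=\beta\mnorm{u'_k}$. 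All hypotheses of Theorem~\ref{thm:firstorderoptimality} are then met, hence~$u_k$ minimizes~\eqref{def:bvprob}. Finally, if~$(\widehat\mu,\widehat v^k)\in\mathcal{A}_k$ for some~$\widehat\mu>0$, the identity above gives~$\langle p_k,\widehat v^k\rangle=\beta$, whereas a direct computation gives~$\langle p_k,\widehat v^k\rangle=|p_k(\hat t_k)|_{\R^d}^2/\cnorm{p_k}=\cnorm{p_k}$ because~$\hat t_k$ maximizes~$|p_k(\cdot)|_{\R^d}$; thus~$\cnorm{p_k}=\beta$ and the previous case applies. The main obstacle I expect is the bookkeeping around the ordering --- one must establish~$p_k(T)=0$ \emph{before} identifying the first component of~$B^*K^*\nabla F(Ku_k)$ with~$-p_k$, since otherwise a linear drift term~$t\,a$ survives in~$\om'$ --- together with the observation that pruning changes neither~$Ku_k$ nor the strict positivity of the surviving heights; the remainder is Cauchy--Schwarz and the triangle inequality for vector measures.
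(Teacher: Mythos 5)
Your proposal is correct and follows essentially the same route as the paper: extract the first-order conditions of the pruned subproblem (offset stationarity giving $p_k(T)=0$, height stationarity at the strictly positive $\mu^k_i$ giving $\langle p_k,v^k_i\rangle=\beta$, which the paper leaves as ``readily verified'' but you justify via Lemma~\ref{lem:charaofadjB}), and then verify the sufficient conditions of Theorem~\ref{thm:firstorderoptimality}. The only (harmless) deviation is cosmetic: in the case $\cnorm{p_k}=\beta$, $u'_k\neq 0$ you obtain $\langle p_k,u'_k\rangle=\beta\mnorm{u'_k}$ by collapsing the chain $\beta\sum_i\mu^k_i=\langle p_k,u'_k\rangle\leq\cnorm{p_k}\mnorm{u'_k}\leq\beta\sum_i\mu^k_i$ into equalities, whereas the paper explicitly identifies $\mathbf{v}^k_i=p_k(t^k_i)/\beta$ and concludes $\mnorm{u'_k}=\sum_i\mu^k_i$; both arguments are valid and rest on the same ingredients.
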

\begin{proof}
By step 2. and 7., respectively, of Algorithm~\ref{alg:accgcg} we have~$\mu^k_i >0$. Moreover~$(\mu^k,c^k)$ is a minimizer to
\begin{align*}
\min_{(\mu,c)\in \R^{N_k}_+\times \R^d} \left \lbrack F\left( \mathcal{K} \left( \sum^{N_k}_{i=1} \mu_i v^k_i, c \right) \right)+ \beta \sum^{N_k}_{i=1} \mu_i \right \rbrack.
\end{align*}
It is readily verified that the first order necessary and sufficient optimality conditions for this problem imply
\begin{align} \label{eq:firstordersubhelp}
p_k(T)=0,~\langle p_k, v^k_i \rangle=(p_k(t^k_i),\mathbf{v}^k_i)_{\R^d}= \beta, \quad i=1,\dots,N_k
\end{align}
Consequently we get
\begin{align*}
\langle p_k, u'_k \rangle= \sum^{N_k}_{i=1} \mu^k_i \langle p_k, v^k_i \rangle=\beta \sum^{N_k}_{i=1} \mu^k_i
\end{align*}
as well as
\begin{align*}
\beta= \langle p_k, v^k_i \rangle \leq \cnorm{p_k}
\end{align*}
for every~$(\mu^k_i,v^k_i)\in \mathcal{A}_k$. Finally assume that~$\cnorm{p_k} \leq \beta$. If~$\cnorm{p_k}<\beta$ we note that~$\mathcal{A}_k=\emptyset$, i.e.~$u'_k=0$, and~$u_k$ satisfies the first order optimality conditions for~\eqref{def:bvprob}, see Theorem~\ref{thm:firstorderoptimality}. Hence, in this case,~$u_k$ is a minimizer to~\eqref{def:bvprob}. The same holds true if~$\cnorm{p_k}=\beta$ and~$u'_k=0$. Last let~$\cnorm{p_k}=\beta$ and~$u'_k \neq 0$ hold. Then~$\mathcal{A}_k \neq \emptyset$. Let~$(\mu^k_i,\mathbf{v}^k_i \delta_{t^k_i})\in \mathcal{A}_k$ be arbitrary. Summarizing the previous observations there holds
\begin{align*}
\beta=(p_k(t^k_i), \mathbf{v}^k_i)_{\R^d}=|p_k(t^k_i)|_{\R^d}= |p_k(t^k_i)|_{\R^d} |\mathbf{v}^k_i|_{\R^d}= \cnorm{p_k}
\end{align*}
and thus~$\mathbf{v}^k_i=p_k(t^k_i)/\beta$. Thus we conclude that~$\mathcal{A}_k$ is of the form
\begin{align*}
\mathcal{A}_k= \{\mu^k_i, p_k(t^k_i)/\beta\}^{N_k}_{i=1}
\end{align*}
with pairwise disjoint positions~$t^k_i$. Consequently,~$u_k=B\left( \sum^{N_k}_{i=1} \mu^k_i v^k_i, c^k \right)$ satisfies
\begin{align*}
\mnorm{u'_k}= \sum^{N_k}_{i=1} \mu^k_i |\mathbf{v}^k_i|_{\R^d}=\sum^{N_k}_{i=1} \mu^k_i.
\end{align*}
Together with~$\langle p_k,u'_k \rangle=\sum^{N_k}_{i=1} \mu^k_i$ we finish noting that~$u_k$ fulfils the sufficient first order optimality conditions from Theorem~\ref{thm:firstorderoptimality}. Finally, if~$(\widehat{\mu},\widehat{v}^k) \in \mathcal{A}_k$ for some~$\widehat{\mu}>0$ then we have
\begin{align*}
\beta= \langle p_k, \widehat{v}^k \rangle=\cnorm{p_k}
\end{align*}
and thus~$u_k$ is again a minimizer of~\eqref{def:bvprob} following the previous observations.
\end{proof}
\section{Convergence analysis} \label{sec:convergence}
This section addresses the convergence of Algorithm~\ref{alg:accgcg}. The presentation is split into two parts. In Section~\ref{subsec:worstcase} we provide the subsequential strict convergence of~$u_k$ towards minimizers of~\eqref{def:bvprob} as well as a first convergence result for the residuals
\begin{align*}
r_j(u_k) \coloneqq j(u_k)- \min_{u\in \BV} j(u).
\end{align*}
In the second part, Section~\ref{subsec:improved}, we prove that under additional structural assumptions on the optimal dual variable~$\bar{p}=\int^\cdot_0 K^* \nabla F(\bar{y})(s)~\mathrm{d}s$,~\eqref{def:bvprob} admits a unique minimizer~$\bar{u}$ and the iterates~$u_k$ generated by Algorithm~\ref{alg:accgcg} satisfy
\begin{align}
r_j(u_k)+\|u_k-\optu\|_{L^1}+ |\mnorm{u'_k}-\mnorm{\optu'}| \leq c \zeta^k
\end{align}
for some~$\zeta \in (0,1)$ and all~$k \in \N$ large enough.
\subsection{Global sublinear convergence} \label{subsec:worstcase}
In the following let
\begin{align*}
\mathcal{A}_k = \left\{\left( \mu^k_i, v^k_i \right)\right\}^{N_k}_{i=1},~u_k=B \left( \sum^{N_k}_{i=1} \mu^k_i v^k_i, c^k \right),~y_k=Ku_k ,~p_k(\cdot)= \int^\cdot_0 K^*(Ky_k)(s)~\mathrm{d}s
\end{align*}
denote the active set, iterate, observation and dual variable in iteration~$k$ of Algorithm~\ref{alg:accgcg}, respectively.
Since~$j$ is radially unbounded, see Assumption~\ref{ass:problem}, the norm of all elements in the sublevel set
\begin{align*}
E_{u_k}=\left \{\,u \in \BV\;|\;j(u)\leq F(Ku_k)+ \beta \sum^{N_k}_{i=1} \mu^k_i\,\right\}.
\end{align*}
is bounded by a constant~$M_k>0$. By construction there holds
\begin{align*}
F(Ku_{k+1})+ \beta \sum^{N_{k+1}}_{i=1} \mu^{k+1}_i \leq F(Ku_{k})+ \beta \sum^{N_{k}}_{i=1} \mu^{k}_i.
\end{align*}
Hence, w.l.o.g, we can assume that~$M_k$ is monotonically decreasing. For example, if~$F\geq 0$ on~$Y$, we can choose
\begin{align*}
M_k \coloneqq \left( F(Ku_k)+ \beta \sum^{N_k}_{i=1} \mu^k_i \right)/\beta.
\end{align*}
We require additional regularity assumptions on the loss functional~$F$.
\begin{assumption} \label{ass:assumponF}
The following two conditions hold:
\begin{itemize}
\item[\textbf{A1}] The gradient~$\nabla F$ is Lipschitz i.e. there is~$L>0$ such that
\begin{align*}
\ynorm{\nabla F(y_1)-\nabla F(y_2)} \leq L \, \ynorm{y_1-y_2} \quad \forall y_1,y_2 \in Y.
\end{align*}
\item[\textbf{A2}]
The functional~$F \colon Y \to \R $ is strongly convex around the optimal observation i.e. there exist a neighbourhood~$\mathcal{N}(\bar{y})$ of~$\bar{y}$ in $Y$ and~$\gamma_0>0$ with
\begin{align*}
F(y) \geq F(\bar{y})+ (\nabla F(\bar{y}),y-\bar{y})_Y+ \gamma_0 \|y-\bar{y}\|^2_Y \quad \forall y \in \mathcal{N}(\bar{y}).
\end{align*}
\end{itemize}
\end{assumption}
This is, e.g., fulfilled for the quadratic loss function~$F(\cdot)= (1/2) \ynorm{\cdot-y_d}^2$ with a target observation~$y_d \in Y$. Now define the auxiliary residual
\begin{align*}
\widehat{r}_j(u_k) \coloneqq F(Ku_k)+\beta \sum^{N_k}_{i=1} \mu^k_i- \min_{u\in \BV} j(u).
\end{align*}
Note that~$r_j(u_k) \leq \widehat{r}_j(u_k)$ holds due to
\begin{align*}
\mnorm{u'_k}= \left\|\sum^{N_k}_{i=1} \mu^k_i v^k_i \right\|_{\mathcal{M}} \leq \sum^{N_k}_{i=1} \mu^k_i
\end{align*}
using that~$\mnorm{v^k_i}=1$.
The following version of the classical descent lemma holds.
\begin{lemma} \label{lem:descent}
Let~$u_k\in \BV,~p_k \in \mathcal{C}_0(I;\R^d)$ and~$\widehat{v}^k \in \mathcal{M}(I;\R^d)$ be generated by Algorithm~\ref{alg:accgcg}. Then we have
\begin{align} \label{eq:descendlem}
\widehat{r}_j(u_{k+1})-\widehat{r}_j(u_{k}) \leq \min_{s\in[0,1]} \left \lbrack -s M_k \left( \cnorm{p_k}-\beta \right) + \frac{L s^2}{2} \ynorm{\mathcal{K}(u'_k-M_k\widehat{v}^k,0)}^2\right \rbrack
\end{align}
for all~$k\geq 1$.
\end{lemma}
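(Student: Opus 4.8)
The plan is to run a conditional-gradient (Frank--Wolfe) descent estimate adapted to the present non-coercive setting, the substitute for the usual bounded feasible set being the single jump~$\widehat{v}^k$ rescaled to total mass~$M_k$. First I would use that, by step~5 and the fact that the pruning in step~7 does not change the objective value of~\eqref{def:subprobjumps} (it only discards zero magnitudes), the iterate satisfies
\begin{align*}
F(Ku_{k+1})+\beta\sum^{N_{k+1}}_{i=1}\mu^{k+1}_i=\min_{(\mu,c)\in\R^{N_k+1}_+\times\R^d}\left[F\left(\mathcal{K}\left(\mu_{N_k+1}\widehat{v}^k+\sum^{N_k}_{i=1}\mu_iv^k_i,c\right)\right)+\beta\sum^{N_k+1}_{i=1}\mu_i\right].
\end{align*}
For~$s\in[0,1]$ the choice~$\mu_i=(1-s)\mu^k_i$ for~$i\leq N_k$,~$\mu_{N_k+1}=sM_k$,~$c=c^k$ is feasible, with associated function~$w_s=B((1-s)u'_k+sM_k\widehat{v}^k,c^k)$. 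Since~$B$ is affine one checks~$w_s-u_k=sB(M_k\widehat{v}^k-u'_k,0)$, hence~$Kw_s=y_k-s\mathcal{K}(u'_k-M_k\widehat{v}^k,0)$. Comparing the displayed minimum with the value attained at~$w_s$ and subtracting~$\min_{u\in\BV}j(u)$ yields
\begin{align*}
\widehat{r}_j(u_{k+1})\leq F(Kw_s)+\beta(1-s)\sum^{N_k}_{i=1}\mu^k_i+\beta sM_k-\min_{u\in\BV}j(u).
\end{align*}

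Next I would bound~$F(Kw_s)$ from above by the classical descent lemma, available thanks to~\textbf{A1}:
\begin{align*}
F(Kw_s)\leq F(y_k)-s\,(\nabla F(y_k),\mathcal{K}(u'_k-M_k\widehat{v}^k,0))_Y+\frac{Ls^2}{2}\ynorm{\mathcal{K}(u'_k-M_k\widehat{v}^k,0)}^2.
\end{align*}
The linear term has to be rewritten in terms of~$p_k$. Using~$\mathcal{K}^*=B^*K^*$, the characterization of~$B^*$ from Lemma~\ref{lem:charaofadjB}, and the fact that~$p_k(T)=0$ (so that the mean-value slot in the auxiliary ODE~\eqref{def:auxode} contributes nothing) — which is part of Proposition~\ref{prop:optimalitysub} — one obtains~$(\nabla F(y_k),\mathcal{K}(q,0))_Y=-\langle p_k,q\rangle$ for all~$q\in\Moc$. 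Applying this with~$q=u'_k$ and~$q=\widehat{v}^k$, together with~$\langle p_k,u'_k\rangle=\beta\sum^{N_k}_{i=1}\mu^k_i$ from Proposition~\ref{prop:optimalitysub} and the elementary identity~$\langle p_k,\widehat{v}^k\rangle=|p_k(\hat{t}_k)|_{\R^d}^2/\cnorm{p_k}=\cnorm{p_k}$, the linear term reduces to~$s\big(\beta\sum^{N_k}_{i=1}\mu^k_i-M_k\cnorm{p_k}\big)$.

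Substituting this into the bound for~$F(Kw_s)$ and that, in turn, into the estimate for~$\widehat{r}_j(u_{k+1})$, the terms~$\beta\sum^{N_k}_{i=1}\mu^k_i$ cancel pairwise, the block~$F(y_k)+\beta\sum^{N_k}_{i=1}\mu^k_i-\min_{u\in\BV}j(u)$ equals~$\widehat{r}_j(u_k)$, and the remaining cross terms collapse to~$-sM_k(\cnorm{p_k}-\beta)$. This gives
\begin{align*}
\widehat{r}_j(u_{k+1})-\widehat{r}_j(u_k)\leq-sM_k(\cnorm{p_k}-\beta)+\frac{Ls^2}{2}\ynorm{\mathcal{K}(u'_k-M_k\widehat{v}^k,0)}^2
\end{align*}
for every~$s\in[0,1]$, and minimizing over~$s$ yields~\eqref{eq:descendlem}. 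The degenerate case~$\mathcal{A}_k=\emptyset$ (hence~$N_k=0$,~$u'_k=0$) is covered, with the sums over~$i$ empty; moreover~$\widehat{v}^k$ is well defined because the algorithm forms it only when it has not terminated, i.e.\ when~$\cnorm{p_k}>\beta>0$.

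I expect the only genuinely delicate point to be the identification~$(\nabla F(y_k),\mathcal{K}(\cdot,0))_Y=-\langle p_k,\cdot\rangle$: one must track the adjoint through the auxiliary ODE~\eqref{def:auxode} and verify that no spurious mean-value term survives, which is exactly where~$p_k(T)=0$ (equivalently, the vanishing mean of~$K^*\nabla F(Ku_k)$, guaranteed by the subproblem optimality conditions) enters. By contrast, the strong-convexity assumption~\textbf{A2} plays no role in this lemma; it will only be used in the subsequent rate estimates. Everything else is routine bookkeeping.
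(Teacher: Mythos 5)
Your proposal is correct and follows essentially the same route as the paper: compare $u_{k+1}$ against the feasible competitor $B((1-s)u'_k+sM_k\widehat{v}^k,c^k)$ for the subproblem, apply the Lipschitz-gradient descent estimate from \textbf{A1}, and reduce the linear term via $\langle p_k,u'_k\rangle=\beta\sum_i\mu^k_i$ and $\langle p_k,\widehat{v}^k\rangle=\cnorm{p_k}$ before minimizing over $s\in[0,1]$. Your explicit verification of the adjoint identity $(\nabla F(y_k),\mathcal{K}(q,0))_Y=-\langle p_k,q\rangle$ through $B^*$ and $p_k(T)=0$ is a detail the paper leaves implicit, but it does not change the argument.
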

\begin{proof}
For every~$s\in(0,1)$ define the auxiliary iterate~$u_{k,s}=B(u'_{k,s},c^k)$ where
\begin{align*}
u'_{k,s}= \mu^{k}_{s,N_k+1} \widehat{v}^k+ \sum^{N_k}_{i=1} \mu^{k}_{s,i}  v^k_i, \quad \text{where} \quad \mu^{k}_s \coloneqq \left((1-s)\mu^k, sM_0 \right) \in \R^{N_k+1}.
\end{align*}
Since~$u_{k+1}$ is constructed using a minimizing pair to~\eqref{def:subprobjumps} we have
\begin{align*}
\widehat{r}_j(u_{k+1})-\widehat{r}_j(u_{k}) \leq F(Ku_{k,s})-F(Ku_{k})+ \beta \left( \sum^{N_k +1}_{i=1} \mu^k_{s,i}- \sum^{N_k}_{i=1} \mu^k_i \right).
\end{align*}
By construction, the second term on the righthandside is equal to
\begin{align*}
\beta \left( \sum^{N_k +1}_{i=1} \mu^k_{s,i}- \sum^{N_k}_{i=1} \mu^k_i \right)= s \beta \left( M_k- \sum^{N_k}_{i=1} \mu^k_i\right).
\end{align*}
Using a Taylor's expansion of the first term~$F(Ku_{k,s})-F(Ku_{k})$ and utilizing the Lipschitz continuity of~$\nabla F$  yields
\begin{align*}
F(Ku_{k,s})-F(Ku_{k})\leq s\langle p_k,u'_k-M_k \widehat{v}^k \rangle+ \frac{L s^2}{2} \ynorm{\mathcal{K}(u'_k-\widehat{v}^k,0)}^2.
\end{align*}
Finally note that due to~$\widehat{v}^k=(p_k(\hat{t}_k)/\cnorm{p_k}) \delta_{\hat{t}_k}$ with~$|p_k(\hat{t}_k)|_{\R^d}=\cnorm{p_k}$ and Proposition~\ref{prop:optimalitysub} we have
\begin{align*}
s\langle p_k,u'_k-M_k \widehat{v}^k \rangle=s \left( \beta\sum^{N_k}_{i=1} \mu^k_i - M_k \cnorm{p_k}  \right).
\end{align*}
Summarizing all previous observations and minimizing w.r.t~$s\in [0,1]$ we arrive at the claimed inequality.
\end{proof}
Using Lemma~\ref{lem:descent} we prove the subsequential convergence of~$u_k$ towards minimizers of~\eqref{def:bvprob} as well as the sublinear convergence of ~$r_j(u_k)$.
\begin{theorem} \label{thm:slowconvalg}
Let~$u_k \in \BV$ and~$p_k \in \mathcal{C}_0(I;\R^d)$ be generated by Algorithm~\ref{alg:accgcg}. Then we have
\begin{align} \label{upperforres}
r_j(u_k) \leq \widehat{r}_j(u_k) \leq M_k(\cnorm{p_k}-\beta)
\end{align}
Moreover Algorithm~\ref{alg:accgcg} either terminates after finitely many steps with~$u_k$ a solution to~\eqref{def:bvprob} or we have
\begin{align} \label{rate}
r_j(u_k)\leq \widehat{r}_j(u_k)\leq \frac{\widehat{r}_j(u_1)}{1+qk} \quad \text{where} \quad q=\frac{1}{2} \min \left\{1, \frac{\widehat{r}_j(u_k)}{4 \|K\|^2_{ \text{BV},Y} M^2_0}\right\}
\end{align}
for all~$k \geq 1$.
In this case,~$u_k$ admits at least one strict accumulation point and each such point is a solution to~\eqref{def:bvprob}. Moreover we have~$K u_k \rightarrow \bar{y}$ in~$Y$ as well as~$p_k \rightarrow \bar{p}$ in~$\mathcal{C}_0(I;\R^d)$. If the minimizer~$\bar{u}$ to~\eqref{def:bvprob} is unique then~$u_k \rightharpoonup^s \bar{u}$ on the whole sequence.
\end{theorem}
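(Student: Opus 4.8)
The plan is to follow the standard pattern for generalized conditional gradient / exchange methods, exploiting the descent Lemma~\ref{lem:descent} as the engine. First I would establish the a~priori bound~\eqref{upperforres}: by Proposition~\ref{prop:optimalitysub} we have~$\langle p_k, u_k'\rangle = \beta\sum_i \mu_i^k$, and for any competitor~$u\in\BV$ with~$j(u)\le F(Ku_k)+\beta\sum_i\mu_i^k$ (hence~$u\in E_{u_k}$, so $\mnorm{u'}\le M_k$) one writes the convexity inequality $F(Ku)\ge F(Ku_k)+\langle p_k,u'-u_k'\rangle$ and combines it with $\langle p_k,u'\rangle \le \cnorm{p_k}\mnorm{u'}\le M_k\cnorm{p_k}$ to get $\widehat r_j(u_k)\le M_k(\cnorm{p_k}-\beta)$; the inequality $r_j(u_k)\le\widehat r_j(u_k)$ was already noted. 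If the algorithm terminates, Proposition~\ref{prop:optimalitysub} gives optimality of~$u_k$, so assume it runs forever. Then $\cnorm{p_k}>\beta$ for all~$k$, and plugging the bound~$\cnorm{p_k}-\beta\ge \widehat r_j(u_k)/M_k$ into the right-hand side of~\eqref{eq:descendlem}, together with $\ynorm{\mathcal K(u_k'-M_k\widehat v^k,0)}\le \|K\|_{\mathrm{BV},Y}(\mnorm{u_k'}+M_k)\le 2\|K\|_{\mathrm{BV},Y}M_0$ (using $\mnorm{u_k'}\le M_0$ and $\mnorm{\widehat v^k}=1$), yields
\begin{align*}
\widehat r_j(u_{k+1})-\widehat r_j(u_k)\le \min_{s\in[0,1]}\Bigl[-s\,\widehat r_j(u_k)+2Ls^2\|K\|_{\mathrm{BV},Y}^2M_0^2\Bigr].
\end{align*}
Carrying out the elementary minimization in~$s$ and invoking the classical recursion lemma for sequences satisfying $a_{k+1}\le a_k - q\,a_k^2/a_k$-type bounds (monotonicity of $\widehat r_j$ is built in by construction) gives the rate~\eqref{rate}.

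Next I would turn to the convergence of the iterates. From~\eqref{rate} we get $\widehat r_j(u_k)\to 0$, hence $r_j(u_k)\to 0$; in particular $F(Ku_k)\to \min j$. Boundedness of $u_k$ in~$\BV$ follows since $u_k\in E_{u_0}$ (the functional values decrease), so by Banach--Alaoglu there is a weak* convergent subsequence $u_{k_\ell}\rightharpoonup^* u^*$; weak* lower semicontinuity of $\mnorm{\cdot'}$ and continuity of $F\circ K$ under $L^1$-convergence (here $Ku_{k_\ell}\to Ku^*$ because $K$ is continuous $L^2\to Y$ and $u_{k_\ell}\to u^*$ in $L^1\hookrightarrow L^2$, using the compact embedding) show $j(u^*)\le\liminf j(u_{k_\ell})=\min j$, so $u^*$ is a minimizer. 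To upgrade to strict convergence I would argue that $\mnorm{u_{k_\ell}'}\to\mnorm{u^{*}{}'}$: the $\liminf$ is $\ge$ by lsc, while $\limsup\beta\mnorm{u_{k_\ell}'}\le\limsup[F(Ku_{k_\ell})+\beta\mnorm{u_{k_\ell}'}]-\liminf F(Ku_{k_\ell})=\min j - F(Ku^*) = \beta\mnorm{u^{*}{}'}$ since $j(u^*)=\min j$. For the observation: strict convexity of $F$ forces $\bar y$ unique, and $F(Ku_k)\to\min j = F(\bar y)+\beta\mnorm{\bar u'}$; combined with $\beta\mnorm{u_k'}\ge r$ for suitable lower bounds one extracts $F(Ku_k)\to F(\bar y)$, and then Assumption~\ref{ass:assumponF}~\textbf{A2} (strong convexity near $\bar y$, noting $Ku_k$ eventually enters the neighbourhood $\mathcal N(\bar y)$ along convergent subsequences and, by a standard contradiction argument using uniqueness of $\bar y$, on the full sequence) gives $\gamma_0\|Ku_k-\bar y\|_Y^2\le F(Ku_k)-F(\bar y)-(\nabla F(\bar y),Ku_k-\bar y)_Y\to 0$, whence $Ku_k\to\bar y$ in~$Y$. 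Finally $p_k=\int_0^\cdot K^*\nabla F(Ku_k)\,\de s\to\int_0^\cdot K^*\nabla F(\bar y)\,\de s=\bar p$ in $\mathcal C_0(I;\R^d)$ by \textbf{A1} and continuity of $\varphi\mapsto\int_0^\cdot\varphi$ from $L^2$ to $\mathcal C_0$.

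The last clause — if $\bar u$ is unique then $u_k\rightharpoonup^s\bar u$ on the full sequence — follows from the usual subsequence principle: every subsequence of $u_k$ is bounded in $\BV$, hence has a further subsequence converging strictly to some minimizer, which must be $\bar u$; since the strict topology is metrizable (via the metric $d$ recalled in Section~\ref{sec:notation}), convergence of every subsubsequence to the common limit $\bar u$ forces $d(u_k,\bar u)\to 0$.

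The main obstacle I anticipate is the passage from $F(Ku_k)\to\min j$ to $F(Ku_k)\to F(\bar y)$, i.e.\ separating the loss term from the regularizer in the limit. This requires ruling out that $\beta\mnorm{u_k'}$ stays bounded away from $\beta\mnorm{\bar u'}$ in a way that is compensated by $F(Ku_k)$ dipping below $F(\bar y)$; one resolves it by combining the lower semicontinuity argument above along an arbitrary subsequence with the uniqueness of $\bar y$ (so the limit of $Ku_k$ is forced regardless of which subsequence is taken), and only then does \textbf{A2} become applicable to deduce the quantitative rate of $Ku_k\to\bar y$. Everything else is routine once this split is in hand.
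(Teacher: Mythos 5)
Your proposal is correct and follows essentially the same route as the paper: the dual bound~\eqref{upperforres} via convexity of~$F$, Proposition~\ref{prop:optimalitysub} and~$\bar{u}\in E_{u_k}$; the rate~\eqref{rate} by inserting~\eqref{upperforres} into Lemma~\ref{lem:descent}, bounding the curvature term by~$2\|K\|_{\text{BV},Y}M_0$ and applying the standard Dunn-type recursion lemma; and the convergence statements via BV-boundedness, weak* compactness, the compact embedding into~$L^2$, weak* lower semicontinuity of~$j$, uniqueness of~$\bar{y}$ and~$\bar{p}$, and the subsequence principle in the metrizable strict topology. Two minor remarks: the intermediate claim ``$F(Ku_k)\to\min j$'' should read~$j(u_k)\to\min j$ (your subsequent limsup/liminf computation is the correct one), and the appeal to Assumption~\ref{ass:assumponF}~\textbf{A2} for~$Ku_k\to\bar{y}$ is superfluous here --- and on its own would be circular concerning when~$Ku_k$ enters~$\mathcal{N}(\bar{y})$ --- since the subsequence/uniqueness-of-$\bar{y}$ argument you also sketch already gives the claim, which is exactly the paper's route (A2 is only used later, in Proposition~\ref{prop:estforstates}, for the quantitative estimate).
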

\begin{proof}
Let~$\bar{u}$ denote an arbitrary minimizer of~\eqref{def:bvprob}. Since~$F$ is convex we estimate
\begin{align*}
\widehat{r}_j(u_k) \leq (-K^* \nabla F(Ku_k),\bar{u}-u_k)_Y+ \beta \left(\sum^{N_k}_{i=1} \mu^k_i-\mnorm{\bar{u}} \right)= \langle p_k, \bar{u}' \rangle- \beta \mnorm{\bar{u}}.
\end{align*}
Finally note that~$\bar{u} \in E_{u_k}$ and thus
\begin{align*}
\langle p_k, \bar{u}' \rangle- \beta \mnorm{\bar{u}} \leq \mnorm{\bar{u}}(\cnorm{p_k} -\beta) \leq M_k (\cnorm{p_k} -\beta)
\end{align*}
yielding~\eqref{upperforres}.

Now assume that Algorithm~\ref{alg:accgcg} does not converge after finitely many steps. Then~$\cnorm{p_k} \geq \beta$, see Proposition~\ref{prop:optimalitysub}, and ~$r_j(u_k)>0$ for all~$k$. Explicitly calculating the minimum in~\eqref{eq:descendlem}, using~\eqref{upperforres} and dividing by~$\widehat{r}_j(u_1)$ we obtain
\begin{align*}
\frac{\widehat{r}_j(u_{k+1})}{\widehat{r}_j(u_{1})} &\leq \frac{\widehat{r}_j(u_{k})}{\widehat{r}_j(u_{1})}- \frac{\widehat{r}_j(u_{1})}{2} \min \left\{\frac{1}{\widehat{r}_j(u_{k})}, \frac{1}{L \ynorm{\mathcal{K}(u'_k-M_k\widehat{v}^k,0)}^2}\right\} \frac{\widehat{r}_j(u_{k})}{\widehat{r}_j(u_{1})}
 \\ &\leq \frac{\widehat{r}_j(u_{k})}{\widehat{r}_j(u_{1})}- \frac{1}{2} \min \left\{1, \frac{\widehat{r}_j(u_k)}{4 \|K\|^2_{ \text{BV},Y} M^2_k}\right\} \frac{\widehat{r}_j(u_{k})}{\widehat{r}_j(u_{1})}
 \\ &\leq \frac{\widehat{r}_j(u_{k})}{\widehat{r}_j(u_{1})}- \frac{1}{2} \min \left\{1, \frac{\widehat{r}_j(u_k)}{4 \|K\|^2_{ \text{BV},Y} M^2_0}\right\} \frac{\widehat{r}_j(u_{k})}{\widehat{r}_j(u_{1})}.
\end{align*}
Invoking~\cite[Lemma 3.1]{Dunn}  yields~\eqref{rate}. Since~$j$ is radially unbounded, see Assumption~\ref{ass:assumponF}, and~$r_j(u_k) \rightarrow 0$, we conclude that~$u_k$ is bounded in~$\BV$. Thus it admits at least one weak* convergent subsequence, denoted by the same index, with limit~$\bar{u}\in \BV$ i.e.~$u_k \rightarrow \bar{u}$ in~$L^1(I;\R^d)$ and~$u'_k \rightharpoonup^* \bar{u}'$ in~$\Moc$. Since~$\BV \hookrightarrow^c L^2(I;\R^d)$ we also conclude~$y_k \rightarrow K\bar{u}$ in~$Y$ as well as
\begin{align*}
p_k \rightarrow \int^\cdot_0 K^* \nabla F(K\bar{u})(s)~\mathrm{d}s~\text{in}~\mathcal{C}_0(I;\R^d).
\end{align*}
Finally we note that~$j$ is weak* lower semicontinuous on~$\BV$. Consequently~$r_j(\bar{u})=0$ and~$\bar{u}$ is a minimizer to~\eqref{def:bvprob}. Finally, since~$F(Ku_k)\rightarrow F(K\bar{u})$, we also get~$\mnorm{u'_k} \rightarrow \mnorm{\bar{u}'}$ yielding the strict convergence of~$u_k$ towards~$\bar{u}$. Thus we have shown that any weak* accumulation point of $u_k$ is indeed a strict accumulation point and a minimizer of~\eqref{def:bvprob}. Recalling that the optimal observation~$\bar{y}$ as well as the optimal dual variable~$\bar{p}$ are unique we conclude~$y_k \rightarrow \bar{y}$ in~$Y$ and~$p_k \rightarrow \bar{p}$ in~$\mathcal{C}_0(I;\R^d)$ for the whole sequence. If~$\bar{u}$ is the unique minimizer of~\eqref{def:bvprob} then it is also the unique strict accumulation point of~$u_k$ and thus~$u^k \rightharpoonup^s \bar{u}$ on the whole sequence.
\end{proof}

If~$F$ is strongly convex around~$\bar{y}$, see Assumption~\ref{ass:assumponF}~$\mathbf{A2}$, then the convergence guarantee for the residual from Theorem~\ref{thm:slowconvalg} also carries over to the observations and dual variables.
\begin{prop}
\label{prop:estforstates}
Let~Assumption~\ref{ass:assumponF} hold. Then we have
\begin{align*}
\ynorm{y_k-\bar{y}}+ \cnorm{p_k-\bar{p}}+ |\cnorm{p_k}-\cnorm{\bar{p}}| \leq cr_j(u_k)^{1/2}
\end{align*}
for all~$k \in \N$ large enough.
\end{prop}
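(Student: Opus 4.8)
The plan is to establish a quadratic growth estimate $\gamma_0\ynorm{y_k-\bar y}^2 \le r_j(u_k)$ for all sufficiently large $k$ and then to transport it to the dual variables using the Lipschitz continuity of $\nabla F$ from Assumption~\ref{ass:assumponF}~$\mathbf{A1}$.

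First I would invoke Theorem~\ref{thm:slowconvalg} (in the non-terminating case) to get $y_k = Ku_k \to \bar y$ in $Y$; hence there is $k_0 \in \N$ with $y_k \in \mathcal{N}(\bar y)$ for every $k \ge k_0$, where $\mathcal{N}(\bar y)$ is the neighbourhood from $\mathbf{A2}$. Fix $k \ge k_0$, let $\bar u$ be any minimizer of~\eqref{def:bvprob}, and write $r_j(u_k) = (F(y_k)-F(\bar y)) + \beta(\mnorm{u_k'}-\mnorm{\bar u'}) \ge 0$. Applying the strong convexity inequality $\mathbf{A2}$ to the first difference and rewriting the resulting linear term by integration by parts --- since $\bar p \in H^1_0(I;\R^d)$ (note $\bar p(0)=\bar p(T)=0$) with $\bar p' = K^*\nabla F(\bar y)$ one has $(\nabla F(\bar y), y_k-\bar y)_Y = (\bar p', u_k-\bar u)_{L^2} = -\langle \bar p, u_k'-\bar u'\rangle$ --- and then using the extremality relation $\langle \bar p, \bar u'\rangle = \beta\mnorm{\bar u'}$ together with $\cnorm{\bar p}\le\beta$ from Theorem~\ref{thm:firstorderoptimality}, I arrive at
\begin{align*}
r_j(u_k) \ge \gamma_0\ynorm{y_k-\bar y}^2 + \big(\beta\mnorm{u_k'} - \langle \bar p, u_k'\rangle\big) \ge \gamma_0\ynorm{y_k-\bar y}^2,
\end{align*}
where the last step uses $\langle \bar p, u_k'\rangle \le \cnorm{\bar p}\mnorm{u_k'} \le \beta\mnorm{u_k'}$. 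This yields $\ynorm{y_k-\bar y} \le \gamma_0^{-1/2}\,r_j(u_k)^{1/2}$.

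For the dual part I would start from $p_k(t) - \bar p(t) = \int_0^t K^*\big(\nabla F(y_k)-\nabla F(\bar y)\big)(s)~\de s$ for $t \in I$. Bounding $\cnorm{p_k-\bar p}$ by the full integral over $I$, applying Cauchy--Schwarz, and using $\|K^*\| = \|K\|$ together with $\mathbf{A1}$ gives
\begin{align*}
\cnorm{p_k-\bar p} \le \sqrt{T}\,\|K\|\,\ynorm{\nabla F(y_k)-\nabla F(\bar y)} \le \sqrt{T}\,\|K\|\,L\,\ynorm{y_k-\bar y}.
\end{align*}
Combined with the first bound this controls $\cnorm{p_k-\bar p}$ by $c\,r_j(u_k)^{1/2}$, and the reverse triangle inequality $|\cnorm{p_k}-\cnorm{\bar p}| \le \cnorm{p_k-\bar p}$ takes care of the third term; summing the three estimates gives the claim.

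I expect the main obstacle to be the integration-by-parts identity $(\bar p', u_k-\bar u)_{L^2} = -\langle \bar p, u_k'-\bar u'\rangle$: it has to be justified for the merely $H^1_0$ certificate $\bar p$ paired against $\BV$ increments (by density of $\mathcal{C}^\infty_c(I;\R^d)$ in $H^1_0(I;\R^d)$, using $u_k-\bar u \in L^2(I;\R^d) \cap L^\infty(I;\R^d)$ and $(u_k-\bar u)' \in \Moc$), and one must check that no endpoint contributions appear and, in particular, that the (generally different) offsets of $u_k$ and $\bar u$ are invisible to this pairing. Equivalently, one can argue via $\mathcal{K}^* = B^*K^*$ and the identity $B^*(\bar p') = (-\bar p, 0)$, which uses the optimality relation $\bar p(T)=0$. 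The remaining ingredients --- the reduction to $k \ge k_0$ so that $\mathbf{A2}$ applies, and the elementary norm estimates --- are routine.
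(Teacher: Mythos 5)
Your proposal is correct and follows essentially the same route as the paper: first the strong convexity of $F$ around $\bar y$ (Assumption~\ref{ass:assumponF}~$\mathbf{A2}$), combined with the identity $(\nabla F(\bar y),y_k-\bar y)_Y=\langle\bar p,\bar u'-u_k'\rangle$ and the optimality relations $\langle\bar p,\bar u'\rangle=\beta\mnorm{\bar u'}$, $\cnorm{\bar p}\leq\beta$, gives $\ynorm{y_k-\bar y}\leq\gamma_0^{-1/2}r_j(u_k)^{1/2}$ once $y_k\in\mathcal N(\bar y)$, and then the Lipschitz continuity of $\nabla F$ (Assumption~$\mathbf{A1}$) together with the boundedness of $K^*$ and the reverse triangle inequality handles $\cnorm{p_k-\bar p}$ and $|\cnorm{p_k}-\cnorm{\bar p}|$. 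The integration-by-parts step you flag is exactly the paper's use of the adjoint characterization $B^*$ (Lemma~\ref{lem:charaofadjB}/Theorem~\ref{thm:firstorderoptimality}), with $\bar p(0)=\bar p(T)=0$ removing boundary and offset contributions, so no gap remains.
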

\begin{proof}
Let~$\mathcal{N}(\bar{y})$ denote the neighbourhood from Assumption~\ref{ass:assumponF}~$\mathbf{A2}$. Since~$y_k \rightarrow \bar{y}$ in~$Y$,see Theorem~\ref{thm:slowconvalg}, there holds~$y_k \in \mathcal{N}(\bar{y})$ for all~$k\in\N$ large enough. Consequently Assumption~\ref{ass:assumponF}~$\mathbf{A2}$ yields
\begin{align*}
r_j(u_k) \geq (\nabla F(\bar{y}),y_k-\bar{y})_Y + \beta(\mnorm{u'_k}-\mnorm{\bar{u}'})+ \gamma_0 \ynorm{y_k-\bar{y}}^2.
\end{align*}
Finally noting that
\begin{align*}
(\nabla F(K\bar{u}),y_k-\bar{y})_Y + \beta(\mnorm{u'_k}-\mnorm{\bar{u}'})= \langle \bar{p}, \bar{u}'-u'_k \rangle + \beta(\mnorm{u'_k}-\mnorm{\bar{u}'}) \geq 0,
\end{align*}
see Theorem~\ref{thm:firstorderoptimality}, we get
\begin{align*}
\ynorm{y_k-\bar{y}} \leq (1/\gamma_0)^{1/2}\, r_j(u_k)^{1/2}.
\end{align*}
The remaining estimates follow from
\begin{align*}
|\cnorm{p_k}-\cnorm{\bar{p}}| &\leq \cnorm{p_k-\bar{p}} \leq c \|K^*(\nabla F(y_k)-\nabla F(\bar{y}))\|_{L^2} \\ & \leq
 c \|K^*\|_{Y,L^2} \ynorm{\nabla F(y_k)-\nabla F(\bar{y})} \\& \leq
 c L\|K^*\|_{Y,L^2} \ynorm{y_k-\bar{y}}.
\end{align*}
\end{proof}
\subsection{Local linear convergence}
 \label{subsec:improved}
Next we prove that Algorithm~\ref{alg:accgcg} converges linearly provided that additional structural requirements on the optimal dual variable~$\bar{p}$ hold.  First we assume that~$\bar{p}$ only admits a finite number~$N$ of global extrema~$\{\bar{t}_i\}^N_{i=1}$. Together with a linear independence assumption on~$\{\bar{p}(\bar{t}_i)\}^N_{i=1}$ this ensures the existence of a unique, piecewise constant minimizer to~\eqref{def:bvprob}.
\begin{assumption} \label{ass:nondegeneratesource1}
Recall the definition of the optimal dual variable~$\bar{p}= \int^\cdot_0 K^*\nabla F(\bar{y})~\mathrm{d}s$. Assume that there is~$N\in \N$ and~$\{\bar{t}_i\}^N_{i=1}\subset I$ with
\begin{align} \label{ass:isolpoints}
 \{\bar{t}_i\}^N_{i=1}=\left\{\,t \in I\;|\;|\bar{p}(t)|_{\R^d}=\cnorm{\bar{p}}=\beta\, \right\}.
\end{align}
Moreover let~$\{e_i\}^N_{i=1} \subset \R^d$ denote the canonical basis of~$\R^d$. The set
\begin{align} \label{eq:linindass}
\{K\bar{p}(\bar{t}_i) \chi_{\bar{t}_i}\}^N_{i=1} \cup \{Ke_i\chi_I\}^d_{i=1} \subset Y
\end{align}
is linearly independent.
\end{assumption}
\begin{coroll} \label{coroll:uniqueness}
Let Assumption~\ref{ass:nondegeneratesource1} hold. Then the minimizer~$\bar{u}=B(\bar{u}',a_{\bar{u}})$ to~\eqref{def:bvprob} is unique and~$\bar{u}'$ is given by
\begin{align*}
\bar{u}'= \sum^N_{i=1} \bar{\mu}_i \bar{v}_i= \sum^N_{i=1} \bar{\mu}_i \bar{\mathbf{v}}_i \delta_{\bar{t}^i_0} \quad \text{where} \quad \bar{\mu}_i \geq 0,~\bar{\mathbf{v}}_i= \frac{\bar{p}(\bar{t}_i)}{\beta}
\end{align*}
for all~$i=1,\dots,N$.
\end{coroll}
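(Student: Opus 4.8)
The plan is to combine the support characterization already obtained in Corollary~\ref{coroll:suppcond} with the linear independence requirement~\eqref{eq:linindass} and the uniqueness of the optimal observation~$\bar{y}$ (and hence of~$\bar{p}$).

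First I would observe that condition~\eqref{ass:isolpoints} is nothing but hypothesis~\eqref{eq:suppcond}. Hence, invoking Proposition~\ref{prop:existence} for existence, Corollary~\ref{coroll:suppcond} guarantees that a minimizer exists and that \emph{every} minimizer~$\bar{u}=B(\bar{u}',a_{\bar{u}})$ of~\eqref{def:bvprob} is of the claimed form: $\bar{u}'=\sum_{i=1}^N\bar{\mu}_i\bar{v}_i$ with $\bar{v}_i=\bar{\mathbf{v}}_i\delta_{\bar{t}_i}$, $\bar{\mathbf{v}}_i=\bar{p}(\bar{t}_i)/\beta$ and $\bar{\mu}_i\ge 0$. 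Note that the positions $\bar{t}_i$ and the directions $\bar{\mathbf{v}}_i$ do not depend on the particular minimizer, since $\bar{p}$ is uniquely determined by the unique optimal observation (cf.\ Theorem~\ref{thm:firstorderoptimality} and the strict convexity of~$F$). It therefore only remains to show that the coefficients $\bar{\mu}_i$ and the offset $a_{\bar{u}}$ are common to all minimizers.

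To prove this I would make the representation $\bar{u}=B(\bar{u}',a_{\bar{u}})$ explicit via the definition~\eqref{def:Boperator} of $B$, using $B(\mathbf{v}\delta_t,0)=\mathbf{v}\chi_t-\tfrac{T-t}{T}\mathbf{v}$ and $B(0,c)=c$. Expanding the constant $\R^d$-vectors in the canonical basis $\{e_j\}_{j=1}^d$ and applying $K$ then yields
\begin{align*}
\bar{y}=K\bar{u}=\frac{1}{\beta}\sum_{i=1}^N\bar{\mu}_i\,K\bar{p}(\bar{t}_i)\chi_{\bar{t}_i}+\sum_{j=1}^d\widetilde{c}_j\,Ke_j\chi_I,
\end{align*}
where the scalars $\widetilde{c}_j\in\R$ collect the offset $a_{\bar{u}}$ together with the constant shifts produced by the $B(\bar{v}_i,0)$. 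Since the family $\{K\bar{p}(\bar{t}_i)\chi_{\bar{t}_i}\}_{i=1}^N\cup\{Ke_j\chi_I\}_{j=1}^d$ is linearly independent in $Y$ by~\eqref{eq:linindass}, and $\bar{y}$ is the unique optimal observation, the scalars $\bar{\mu}_i/\beta$ and $\widetilde{c}_j$ are uniquely determined by $\bar{y}$. Hence the $\bar{\mu}_i\ge 0$ are unique, and resolving the (linear, triangular) relation between $\widetilde{c}_j$, $a_{\bar{u}}$ and the $\bar{\mu}_i$ shows that $a_{\bar{u}}$ is unique as well; consequently $(\bar{u}',a_{\bar{u}})$, and therefore $\bar{u}=B(\bar{u}',a_{\bar{u}})$, is unique.

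The one point requiring care is the bookkeeping of the mean-value contributions: $B(\bar{v}_i,0)$ is not the characteristic function $\bar{\mathbf{v}}_i\chi_{\bar{t}_i}$ itself but its mean-value-free version, differing by a constant vector in $\R^d$, and these constants must be absorbed into the offset. This is precisely why Assumption~\ref{ass:nondegeneratesource1} is phrased with the shapes $\chi_{\bar{t}_i}$ and $\chi_I$ rather than the mean-value-free ones: the two families span the same subspace of $L^2(I;\R^d)$, so linear independence of the displayed set is exactly what makes the map from $(\bar{\mu}_i,a_{\bar{u}})$ to the observation injective. Everything else is routine.
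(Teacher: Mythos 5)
Your argument is correct and matches the paper's proof in all essentials: both use Corollary~\ref{coroll:suppcond} to write every minimizer as $\bar{C}\chi_0+\sum_{i=1}^N\bar{\mu}_i\bar{\mathbf{v}}_i\chi_{\bar{t}_i}$ with the positions and directions fixed by the (unique) dual variable, and then exploit the linear independence~\eqref{eq:linindass} together with the strict convexity of~$F$ to pin down the coefficients and the offset. The only cosmetic difference is that the paper packages the final step as uniqueness of the minimizer of the reduced finite-dimensional problem $\min_{\mu\in\R^N_+,\,C\in\R^d}\bigl\lbrack F(\widehat{\mathbf{K}}(\mu,C))+\beta\sum_i\mu_i\bigr\rbrack$ (strict convexity of $F\circ\widehat{\mathbf{K}}$ via injectivity of $\widehat{\mathbf{K}}$), whereas you invoke uniqueness of the optimal observation $\bar{y}$ and the injectivity of the coefficient-to-observation map directly --- an equivalent rephrasing of the same reasoning.
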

\begin{proof}
Introduce the linear and continuous operator~$\widehat{\mathbf{K}} \colon \R^N \times \R^d \to Y$ by
\begin{align*}
\widehat{\mathbf{K}}(\mu,C)= K \left(C \chi_0  \right)+\sum^N_{i=1} K \left((\bar{p}(\bar{t}_i)/\beta) \chi_{\bar{t}_i}\right) \quad \forall \mu \in \R^N,~C \in \R^d.
\end{align*}
Then~$\widehat{\mathbf{K}}$ is injective according to~\eqref{eq:linindass}.
According to Corollary~\ref{coroll:suppcond} and~\eqref{ass:isolpoints} every minimizer~$\optu$ of~\eqref{def:bvprob} is of the form
\begin{align*}
\bar{u}= B \left( \sum_{i=1} (\bar{p}(\bar{t}_i)/\beta) \delta_{\bar{t}_i}, a_{\bar{u}} \right)= \bar{C} \chi_0 + \sum^N_{i=1} \bar{\mu}_i  \chi_{\bar{t}_i}
\end{align*}
where~$\bar{\mu} \in \R^N_+$ and~$\bar{C}$ is implicitly given by
\begin{align*}
\bar{C}=a_{\bar{u}}-\frac{1}{T} \sum^N_{i=1}\bar{\mu}_i \, (\bar{p}(\bar{t}_i)/\beta) )\,(T-\bar{t}_i),
\end{align*}
see the definition of the operator~$B$,~\eqref{def:Boperator}, and its inverse~$B^{-1}$, respectively. Due to the optimality of~$\bar{u}$ for~\eqref{def:bvprob} we readily verify that~$(\bar{\mu},\bar{C})$ is a minimizing pair for
\begin{align} \label{def:uniquenessprob}
\min_{\mu \in \R^N_+, C \in \R^d} \left \lbrack F(\widehat{\mathbf{K}}(\mu,C))+\beta \sum^N_{i=1} \mu_i \right \rbrack.
\end{align}
The proof is finished noting that~\eqref{def:uniquenessprob} admits a unique minimizer since~$F \circ \widehat{\mathbf{K}}$ is strictly convex.
\end{proof}
According to Assumption~\ref{ass:nondegeneratesource1} and the continuity of~$|\bar{p}|_{\R^d}$ there is~$\sigma>0$ as well as a radius~$R>0$ such that the intervals~$(\bar{t}_i-R,\bar{t}_i+R) \subset I$,~$i=1,\dots,N$, are pairwise disjoint and
\begin{align} \label{eq:isolofvals}
|\bar{p}(t)|_{\R^d} \leq \beta -\sigma \quad \forall t \in  \bar{I} \setminus \bigcup^N_{i=1} (\bar{t}_i-R,\bar{t}_i+R).
\end{align}
Now we impose a final set of assumptions which requires the positivity of~$\bar{\mu}_i$ as well as the quadratic growth of~$|\bar{p}|_{\R^d}$ around its global maximizers. From the perspective of optimization, this first condition corresponds to a~\textit{strict complementarity condition} and the second one is equivalent to a~\textit{second-order-sufficient-condition (SSC)} for~$\bar{t}_i$.
\begin{assumption} \label{ass:nondegeneratesource2}
For all~$i=1,\dots,N$, there holds~$\bar{\mu}_i >0$ as well as
\begin{align*}
\theta_0|t-\bar{t}_i|^2 \leq \beta -|\bar{p}(t)|_{\R^d} \quad \forall t\in (\bar{t}_i-R,\bar{t}_i+R)
\end{align*}
where~$R>0$ denotes the radius from~\eqref{eq:isolofvals}. Moreover $K^* \in \mathcal{L}(Y; L^{\infty}(I;\R^d))$.
\end{assumption}
\begin{remark} \label{rem:quadgrowth}
Define the scalar-valued function~$\bar{P}(t)=|\bar{p}(t)|_{\R^d}$ and assume that~$\bar{p}\in \mathcal{C}^2(I;\R^d)$. Then it is readily verified that~$\bar{P}$ is also at least two times continuously differentiable on~$(\bar{t}_i-R,\bar{t}+R)$ if~$R>0$ is chosen small enough. In particular this implies~$\bar{P}(\bar{t}_i)=\beta,~\bar{P}'(\bar{t}_i)=0$ and~$\bar{P}''(\bar{t}_i)\leq 0$. Thus, by potentially choosing~$R>0$ even smaller as well as Taylor approximation of~$\bar{P}$ we arrive at
\begin{align*}
\bar{P}(t)= \leq \beta - \frac{|\bar{P}''(\bar{t})|}{4} |t-\bar{t}_i|^2
\end{align*}
for all~$t \in (\bar{t}_i-R,\bar{t}_i+R)$. Hence the quadratic growth condition of Assumption~\ref{ass:nondegeneratesource2} is fulfilled if~$\bar{P}''(\bar{t}_i)\neq 0$,~$i=1,\dots,N$.
\begin{align*}
\end{align*}
\end{remark}
The following quadratic growth behaviour of the linear functional induced by~$\bar{p}$ is a direct consequence.
\begin{lemma} \label{lem:quadgrowthlinearfunc}
Let Assumption~\ref{ass:nondegeneratesource2} hold. Then there is~$\gamma_1 >0$ such that
\begin{align*}
\gamma_1 \left( |t-\bar{t}_i|^2+|\mathbf{v}-\bar{\mathbf{v}}_i|^2_{\R^d} \right) \leq \beta -\langle \bar{p}, \mathbf{v} \delta_t \rangle \quad \forall t\in (\bar{t}_i-R,\bar{t}_i+R),~|\mathbf{v}|_{\R^d}=1
\end{align*}
and all~$i=1,\dots,N$.
\end{lemma}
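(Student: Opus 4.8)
The plan is to use the identity $\langle\bar{p},\mathbf{v}\delta_t\rangle=(\bar{p}(t),\mathbf{v})_{\R^d}$ and, for $t\in(\bar{t}_i-R,\bar{t}_i+R)$ with $|\mathbf{v}|_{\R^d}=1$, to split
\[
\beta-(\bar{p}(t),\mathbf{v})_{\R^d}=\big(\beta-|\bar{p}(t)|_{\R^d}\big)+\big(|\bar{p}(t)|_{\R^d}-(\bar{p}(t),\mathbf{v})_{\R^d}\big)
\]
into two nonnegative contributions. The first is bounded below by $\theta_0|t-\bar{t}_i|^2$ directly by Assumption~\ref{ass:nondegeneratesource2}, so it controls the spatial deviation $|t-\bar{t}_i|^2$. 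For the second I would pass to the normalized dual variable $w(t):=\bar{p}(t)/|\bar{p}(t)|_{\R^d}$ near $\bar{t}_i$: since $w(t)$ and $\mathbf{v}$ are both unit vectors, $1-(w(t),\mathbf{v})_{\R^d}=\tfrac12|w(t)-\mathbf{v}|_{\R^d}^2$, so the second contribution equals $\tfrac12|\bar{p}(t)|_{\R^d}\,|w(t)-\mathbf{v}|_{\R^d}^2$ and controls the deviation of $\mathbf{v}$ from the direction of $\bar{p}(t)$.

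To convert this into the claimed estimate I would first record two facts. By Assumption~\ref{ass:nondegeneratesource1} and Corollary~\ref{coroll:uniqueness}, $\bar{\mathbf{v}}_i=\bar{p}(\bar{t}_i)/\beta=w(\bar{t}_i)$ is itself a unit vector, because $|\bar{p}(\bar{t}_i)|_{\R^d}=\cnorm{\bar{p}}=\beta$. Moreover, $\bar{p}'=K^{*}\nabla F(\bar{y})\in L^{\infty}(I;\R^d)$ by the last hypothesis of Assumption~\ref{ass:nondegeneratesource2}, hence $\bar{p}$ is Lipschitz on $\bar{I}$ with some constant $L_{\bar{p}}>0$. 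After possibly shrinking $R$ I may additionally assume $|\bar{p}(t)|_{\R^d}\ge\beta/2$ on every interval $(\bar{t}_i-R,\bar{t}_i+R)$ (by continuity of $|\bar{p}|_{\R^d}$ and $|\bar{p}(\bar{t}_i)|_{\R^d}=\beta$); then $w$ is Lipschitz on each such interval with some constant $L_w>0$, since the normalization map $x\mapsto x/|x|_{\R^d}$ is Lipschitz on $\{x:|x|_{\R^d}\ge\beta/2\}$. Shrinking $R$ is harmless: on the leftover annulus the quantity $\beta-|\bar{p}(t)|_{\R^d}\ge\theta_0|t-\bar{t}_i|^2$ is bounded below while $|t-\bar{t}_i|^2+|\mathbf{v}-\bar{\mathbf{v}}_i|_{\R^d}^2$ is bounded above, so the asserted inequality holds there with a possibly smaller constant, which I would simply merge with the one obtained below.

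It then remains to combine the pieces. From $\bar{\mathbf{v}}_i=w(\bar{t}_i)$ and the Lipschitz bound, $|\mathbf{v}-\bar{\mathbf{v}}_i|_{\R^d}\le|\mathbf{v}-w(t)|_{\R^d}+L_w|t-\bar{t}_i|$, hence $|\mathbf{v}-\bar{\mathbf{v}}_i|_{\R^d}^2\le 2|\mathbf{v}-w(t)|_{\R^d}^2+2L_w^2|t-\bar{t}_i|^2$. Together with the two lower bounds from the first paragraph this yields
\[
\beta-(\bar{p}(t),\mathbf{v})_{\R^d}\ \ge\ \theta_0|t-\bar{t}_i|^2+\tfrac{\beta}{4}|w(t)-\mathbf{v}|_{\R^d}^2\ \ge\ \gamma_1\big(|t-\bar{t}_i|^2+|\mathbf{v}-\bar{\mathbf{v}}_i|_{\R^d}^2\big)
\]
with $\gamma_1:=\min\{\theta_0/(1+2L_w^2),\ \beta/8\}$, which is exactly the assertion on the shrunk intervals; intersecting this $\gamma_1$ with the constant coming from the annulus completes the proof.

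The one genuinely delicate point I anticipate is the passage from ``the direction $w(t)$ of $\bar{p}(t)$ is close to $\mathbf{v}$'' to ``$\bar{\mathbf{v}}_i$ is close to $\mathbf{v}$''. This is precisely where the regularity hypothesis $K^{*}\in\mathcal{L}(Y;L^{\infty}(I;\R^d))$ of Assumption~\ref{ass:nondegeneratesource2} is needed --- to make $\bar{p}$, and hence $w$, Lipschitz near $\bar{t}_i$ --- together with the lower bound $|\bar{p}(t)|_{\R^d}\ge\beta/2$. Everything else reduces to elementary inequalities for unit vectors in $\R^d$ and a routine bookkeeping of the two quadratic terms.
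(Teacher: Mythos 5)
Your argument is correct: the decomposition $\beta-(\bar{p}(t),\mathbf{v})_{\R^d}=(\beta-|\bar{p}(t)|_{\R^d})+(|\bar{p}(t)|_{\R^d}-(\bar{p}(t),\mathbf{v})_{\R^d})$ into two nonnegative pieces, the quadratic growth bound for the first, the unit-vector identity for the second, the Lipschitz continuity of $\bar p$ coming from $K^*\in\mathcal{L}(Y;L^\infty(I;\R^d))$, and the patching over the annulus after shrinking $R$ all go through, and the bookkeeping leading to $\gamma_1=\min\{\theta_0/(1+2L_w^2),\beta/8\}$ is sound. The overall strategy (position term via quadratic growth, direction term via an elementary inner-product inequality, then Lipschitz continuity of $\bar p$ to pass from the direction at $t$ to $\bar{\mathbf{v}}_i$) is the same as in the paper, but you handle the direction term differently: the paper compares $\mathbf{v}$ with the \emph{unnormalized} vector $\bar p(t)/\beta$, using that $1-(a,\mathbf{v})_{\R^d}\geq \tfrac12|a-\mathbf{v}|^2_{\R^d}$ already holds whenever $|a|_{\R^d}\leq 1$ and $|\mathbf{v}|_{\R^d}=1$, and then writes $|\mathbf{v}-\bar{\mathbf{v}}_i|^2_{\R^d}\leq 2|\mathbf{v}-\bar p(t)/\beta|^2_{\R^d}+2|(\bar p(t)-\bar p(\bar t_i))/\beta|^2_{\R^d}$ with the Lipschitz bound applied to $\bar p$ itself. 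This avoids normalizing $\bar p(t)$, and hence avoids the lower bound $|\bar p(t)|_{\R^d}\geq\beta/2$, the Lipschitz estimate for the normalization map, the shrinking of $R$, and the separate annulus argument; your route costs these extra (harmless but avoidable) steps and produces a constant that additionally depends on $L_w$, while the paper's constant depends only on $\theta_0$, $\beta$ and the $L^\infty$-bound on $K^*\nabla F(K\bar u)$.
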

\begin{proof}
Fix~$i=1,\dots,N$ and~$ t\in (\bar{t}_i-R,\bar{t}_i+R)$ as well as~$\mathbf{v}\in\R^d$ with~$|\mathbf{v}|_{\R^d}=1$.
From Assumption~\ref{ass:nondegeneratesource2} and~$|\mathbf{v}|_{\R^d}=1$ we immediately get
\begin{align*}
\beta -\langle \bar{p}, \mathbf{v} \delta_t \rangle \geq \beta-|\bar{p}(t)|_{\R^d} \geq \theta_0 |t-\bar{t}_i|.
\end{align*}
Second we estimate
\begin{align*}
\beta\left(1-\langle \bar{p}/\beta, \mathbf{v} \delta_t  \rangle\right)= \beta\left(1-( \bar{p}(t)/\beta, \mathbf{v} )_{\R^d}\right) \geq \frac{\beta}{2} |\bar{p}(t)/\beta-\mathbf{v}|^2_{\R^d}.
\end{align*}
using~$|\bar{p}(t)/\beta|_{\R^d}\leq 1$.
Finally we have
\begin{align} \label{eq:lpischitzbarp}
|\bar{p}(t)-\bar{p}(\bar{t}_i)|_{\R^d} \leq |t-\bar{t}_i| \|K^*\nabla F(K\bar{u})\|_{L^\infty(I;\R^n)}.
\end{align}
The claimed statement now follows from noting that
\begin{align*}
 |t-\bar{t}_i|^2+|\mathbf{v}-\bar{\mathbf{v}}_i|^2_{\R^d}  \leq  |t-\bar{t}_i|^2+ 2\left( |\mathbf{v}-\bar{p}(t)/\beta|^2_{\R^d}+ |(\bar{p}(t)-\bar{p}(\bar{t}_i))/\beta|^2_{\R^d} \right)
\end{align*}
where~$\bar{\mathbf{v}}_i= \bar{p}(\bar{t}_i)/\beta$ is used in the first inequality.
\end{proof}
Moreover we deduce the following Lipschitz property of~$\mathcal{K}$.
\begin{lemma} \label{lem:lipofcalK}
There holds
\begin{align*}
\ynorm{\mathcal{K}(\mathbf{v}_1 \delta_{t_1}-\mathbf{v}_2 \delta_{t_2},0)} \leq c \left( |t_1-t_2|+|\mathbf{v}_1-\mathbf{v}_2|_{\R^d} \right)
\end{align*}
for all~$t_1,t_2 \in I$,~$\mathbf{v}_1,\mathbf{v}_2 \in \R^d$,~$|\mathbf{v}_1|_{\R^d}=|\mathbf{v}_2|_{\R^d}=1$.
\end{lemma}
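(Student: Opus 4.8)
The plan is to bypass the $L^2$-continuity of $K$ — which would only yield a Hölder-type dependence $|t_1-t_2|^{1/2}$, as the example $d=1$, $K=\Id$ shows — and instead exploit that, thanks to~$K^*\in\mathcal{L}(Y;L^\infty(I;\R^d))$ (Assumption~\ref{ass:nondegeneratesource2}), the operator $K$ extends to a bounded operator from $L^1(I;\R^d)$ to $Y$. Indeed, for every $\phi\in L^2(I;\R^d)$, Hölder's inequality gives
\begin{align*}
\ynorm{K\phi}=\sup_{\ynorm{y}\le 1}(K\phi,y)_Y=\sup_{\ynorm{y}\le 1}\lzwo{\phi}{K^*y}\le\Big(\sup_{\ynorm{y}\le 1}\|K^*y\|_{L^\infty(I;\R^d)}\Big)\lonenorm{\phi}=\|K^*\|_{\mathcal{L}(Y,L^\infty)}\lonenorm{\phi}.
\end{align*}
Consequently $\ynorm{\mathcal{K}(q,0)}=\ynorm{KB(q,0)}\le\|K^*\|_{\mathcal{L}(Y,L^\infty)}\lonenorm{B(q,0)}$ for all $q\in\Moc$, and it remains to bound $\lonenorm{B(\mathbf{v}_1\delta_{t_1}-\mathbf{v}_2\delta_{t_2},0)}$ linearly in $|t_1-t_2|$ and $|\mathbf{v}_1-\mathbf{v}_2|_{\R^d}$.

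For this I would first make $B$ explicit on atomic measures. Setting $h\coloneqq \mathbf{v}_1\chi_{t_1}-\mathbf{v}_2\chi_{t_2}$ and recalling $\chi_t'=\delta_t$ for $t\in(0,T)$, one has $h\in\BV$ with $h'=\mathbf{v}_1\delta_{t_1}-\mathbf{v}_2\delta_{t_2}$; hence, by the characterisation of $B^{-1}$ in Proposition~\ref{prop:equivalence}, $B(\mathbf{v}_1\delta_{t_1}-\mathbf{v}_2\delta_{t_2},0)=h-a_h$. Since $\lonenorm{a_h}=T|a_h|_{\R^d}=|\int_I h|_{\R^d}\le\lonenorm{h}$, this yields $\lonenorm{h-a_h}\le 2\lonenorm{h}$.

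It then only remains to bound $\lonenorm{h}$. Assuming without loss of generality $t_1\le t_2$, decompose $h=(\mathbf{v}_1-\mathbf{v}_2)\chi_{t_1}+\mathbf{v}_2(\chi_{t_1}-\chi_{t_2})$ and note that $\chi_{t_1}-\chi_{t_2}$ is the characteristic function of $(t_1,t_2)$, so $\lonenorm{\chi_{t_1}}=T-t_1\le T$ and $\lonenorm{\chi_{t_1}-\chi_{t_2}}=t_2-t_1$. With $|\mathbf{v}_2|_{\R^d}=1$ the triangle inequality in $L^1$ gives $\lonenorm{h}\le T|\mathbf{v}_1-\mathbf{v}_2|_{\R^d}+|t_1-t_2|$. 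Chaining the three estimates proves the claim with $c=2\max\{1,T\}\,\|K^*\|_{\mathcal{L}(Y,L^\infty)}$.

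The only genuinely delicate point is the first one: recognising that the naive $L^2$-bound is too weak, and that the linear (rather than Hölder-$1/2$) dependence must be extracted from the extra smoothing $K^*\in\mathcal{L}(Y;L^\infty)$; once the $L^1\!\to\!Y$ boundedness of $K$ is in hand, the rest is elementary bookkeeping with characteristic functions.
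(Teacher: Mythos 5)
Your proposal is correct and follows essentially the same route as the paper: both use the extra regularity $K^*\in\mathcal{L}(Y;L^\infty(I;\R^d))$ together with duality to reduce the claim to an $L^1$ bound on $B(\mathbf{v}_1\delta_{t_1}-\mathbf{v}_2\delta_{t_2},0)$, and then estimate this via the explicit form of $B$ on atomic measures and the elementary bound $\|\mathbf{v}_1\chi_{t_1}-\mathbf{v}_2\chi_{t_2}\|_{L^1}\leq |t_1-t_2|+T|\mathbf{v}_1-\mathbf{v}_2|_{\R^d}$. The only cosmetic difference is that you write $B(q,0)=h-a_h$ and use $\|a_h\|_{L^1}\leq\|h\|_{L^1}$, whereas the paper splits off the constant part $\tfrac{1}{T}\mathbf{v}_i(T-t_i)$ and bounds it directly.
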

\begin{proof}
Using the additional regularity of~$K^*$ from Assumption~\ref{ass:nondegeneratesource2} we get
\begin{align*}
\ynorm{\mathcal{K}(\mathbf{v}_1 \delta_{t_1}-\mathbf{v}_2 \delta_{t_2},0)}&= \sup_{\ynorm{y}=1} (\mathcal{K}(\mathbf{v}_1 \delta_{t_1}-\mathbf{v}_2 \delta_{t_2},0),y)_Y\\&= \sup_{\ynorm{y}=1} (B(\mathbf{v}_1 \delta_{t_1}-\mathbf{v}_2 \delta_{t_2},0),K^*y)_{L^2}
\\& \leq \|K^*y\|_{L^\infty} \|B(\mathbf{v}_1 \delta_{t_1}-\mathbf{v}_2 \delta_{t_2},0)\|_{L^1} \\& \leq
\|K^*\|_{Y,L^\infty} \|B(\mathbf{v}_1 \delta_{t_1}-\mathbf{v}_2 \delta_{t_2},0)\|_{L^1}.
\end{align*}
Now recall that
\begin{align*}
B(\mathbf{v}_i \delta_{t_i},0)= \mathbf{v}_i \chi_{t_i}- \frac{1}{T} \mathbf{v}_i (T-t_i),
\end{align*}
$i=1,2$, and thus
\begin{align*}
\|B(\mathbf{v}_1 \delta_{t_1}-\mathbf{v}_2 \delta_{t_2},0)\|_{L^1(I)} \leq \|\mathbf{v}_1 \chi_{t_1}-\mathbf{v}_2 \chi_{t_2}\|_{L^1} +  |\mathbf{v}_1 (T-t_1)-\mathbf{v}_2 (T-t_2)|_{\R^d}.
\end{align*}
The proof is finished noting that
\begin{align} \label{eq:estforindic}
\|\mathbf{v}_1 \chi_{t_1}-\mathbf{v}_2 \chi_{t_2}\|_{L^1} &\leq T |\mathbf{v}_1- \mathbf{v}_2|_{\R^d}+ |\mathbf{v}_1|_{\R^d}\| \chi_{t_1}- \chi_{t_2}\|_{L^1}\\ & \leq |t_1-t_2|+ T |\mathbf{v}_1-\mathbf{v}_2|_{\R^d}
\end{align}
as well as
\begin{align*}
|\mathbf{v}_1 (T-t_1)-\mathbf{v}_2 (T-t_2)|_{\R^d} & \leq |\mathbf{v}_1|_{\R^d}|t_1-t_2|+ |T-t_2||\mathbf{v}_1-\mathbf{v}_2|_{\R^d} \\ &\leq |t_1-t_2|+ T |\mathbf{v}_1-\mathbf{v}_2|_{\R^d}.
\end{align*}
\end{proof}
\subsubsection*{Sketch of the proof}
The following theorem summarizes the main results of the following sections.
\begin{theorem}
Let~$u_k$ be generated by Algorithm~\ref{alg:accgcg} and let Assumption~\ref{ass:problem}-\ref{ass:nondegeneratesource2} hold. Then Algorithm~\ref{alg:accgcg} either terminates after finitely many steps with~$u_k= \bar{u}$ or there is~$\zeta\in (0,1)$ such that
\begin{align*}
r_j(u_k)+\|u_k-\optu\|_{L^1}+ |\mnorm{u'_k}-\mnorm{\optu'}| \leq c \zeta^k
\end{align*}
for all~$k\in\N$ large enough.
\end{theorem}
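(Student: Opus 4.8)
First note that if Algorithm~\ref{alg:accgcg} terminates after finitely many steps the claim is trivial, so we may assume it runs forever. By Theorem~\ref{thm:slowconvalg} we then have $r_j(u_k)\to 0$, $y_k\to\bar y$ in $Y$, $p_k\to\bar p$ in $\mathcal{C}_0(I;\R^d)$; by Corollary~\ref{coroll:uniqueness} the minimizer $\bar u=B(\bar u',a_{\bar u})$ is unique with $\bar u'=\sum_{i=1}^N\bar\mu_i\bar{\mathbf{v}}_i\delta_{\bar t_i}$, all $\bar\mu_i>0$, and $u_k\rightharpoonup^s\bar u$; and Proposition~\ref{prop:estforstates} gives $\cnorm{p_k-\bar p}+|\cnorm{p_k}-\beta|\le c\,r_j(u_k)^{1/2}$ for $k$ large. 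Writing $\varepsilon_k:=r_j(u_k)$ and noting that $\widehat r_j(u_k)=\varepsilon_k$ once the active atom positions are pairwise distinct, the plan is to prove a geometric bound $\varepsilon_k\le c\zeta^k$ and to extract the bounds on $\lonenorm{u_k-\bar u}$ and $|\mnorm{u'_k}-\mnorm{\bar u'}|$ from it in the final step.

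\emph{Localization and coverage of the active set.} For $k$ large I would first show that every atom $v^k_i=\mathbf{v}^k_i\delta_{t^k_i}\in\mathcal A_k$ and the candidate $\widehat{v}^k=\widehat{\mathbf{v}}^k\delta_{\widehat t_k}$ sits near the optimal support. The first-order conditions for the magnitude subproblem give $p_k(T)=0$ and $\langle p_k,v^k_i\rangle=\beta$ (see~\eqref{eq:firstordersubhelp}), while $\langle p_k,\widehat{v}^k\rangle=\cnorm{p_k}\ge\beta$; hence $|\bar p(t^k_i)|\ge\beta-\cnorm{p_k-\bar p}>\beta-\sigma$, and likewise for $\widehat t_k$, so by~\eqref{eq:isolofvals} these positions lie in $(\bar t_\ell-R,\bar t_\ell+R)$ for some index~$\ell$. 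Lemma~\ref{lem:quadgrowthlinearfunc} then yields $|t^k_i-\bar t_\ell|^2+|\mathbf{v}^k_i-\bar{\mathbf{v}}_\ell|_{\R^d}^2\le\gamma_1^{-1}(\beta-\langle\bar p,v^k_i\rangle)\le\gamma_1^{-1}\cnorm{p_k-\bar p}\le c\,\varepsilon_k^{1/2}$, and the analogous bound for $(\widehat t_k,\widehat{\mathbf{v}}^k)$. Denote by $d_k$ the largest of these atom errors, so $d_k\le c\,\varepsilon_k^{1/4}$. Moreover, since $u'_k\rightharpoonup^*\bar u'$, $\mnorm{u'_k}\to\mnorm{\bar u'}=\sum_i\bar\mu_i$ and all atoms cluster around $\{\bar t_\ell\}$, testing against functions localizing at a single $\bar t_\ell$ shows that for $k$ large the mass of $u'_k$ on $(\bar t_\ell-R,\bar t_\ell+R)$ exceeds $\bar\mu_\ell/2>0$; in particular $\mathcal A_k$, hence $\mathcal A_{k+1/2}$, contains at least one active atom near each $\bar t_\ell$.

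\emph{One-step descent.} As $u_{k+1}$ minimizes~\eqref{def:subprobjumps} over the magnitudes of $\{v^k_i\}\cup\{\widehat{v}^k\}$ and the offset, I would test against the feasible points $w_s=(1-s)u_k+s\,u^\sharp$, $s\in[0,1]$, where $u^\sharp$ carries mass $\bar\mu_\ell$ on one active atom near $\bar t_\ell$ — the one indexed by the $\ell_0$ with $\bar t_{\ell_0}$ closest to $\widehat t_k$ being taken to be $\widehat v^k$ — with offset $a_{\bar u}$. Using $\mathcal K^*\nabla F(Ku_k)=(-p_k,0)$ (a consequence of $p_k(T)=0$ and Lemma~\ref{lem:charaofadjB}), $\langle p_k,u'_k\rangle=\beta\mnorm{u'_k}$ from Proposition~\ref{prop:optimalitysub}, the relations $\langle p_k,v^k_i\rangle=\beta$, $\langle p_k,\widehat{v}^k\rangle=\cnorm{p_k}$ and $\mnorm{u^{\sharp\prime}}=\sum_\ell\bar\mu_\ell=\mnorm{\bar u'}$, the triangle inequality $\mnorm{(1-s)u'_k+s\,u^{\sharp\prime}}\le(1-s)\mnorm{u'_k}+s\mnorm{\bar u'}$, and a second-order Taylor expansion of $F$ (condition~\textbf{A1}), the first-order coefficient in $s$ simplifies to $-\bar\mu_{\ell_0}(\cnorm{p_k}-\beta)\le0$ and one arrives at
\[
\widehat r_j(u_{k+1})\le\varepsilon_k-s\,\bar\mu_{\ell_0}\bigl(\cnorm{p_k}-\beta\bigr)+\tfrac{L}{2}\,s^2\ynorm{y^\sharp-y_k}^2,\qquad \ynorm{y^\sharp-y_k}^2\le c\,(d_k^2+\varepsilon_k),
\]
where $\ynorm{y^\sharp-\bar y}\le c\,d_k$ comes from Lemma~\ref{lem:lipofcalK} and $\ynorm{y_k-\bar y}^2\le c\,\varepsilon_k$ from Proposition~\ref{prop:estforstates}. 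Minimizing in $s$ and invoking $\varepsilon_k\le M_k(\cnorm{p_k}-\beta)$ from~\eqref{upperforres} produces a recursion of the form $\varepsilon_{k+1}\le\varepsilon_k-c\min\{\varepsilon_k,\ \varepsilon_k^2/(d_k^2+\varepsilon_k)\}$; alternatively, taking $s=1$ with $u^\sharp$ the pure best approximation of $\bar u$ in the current atom span gives $\varepsilon_{k+1}\le\widehat r_j(u_{k+1})\le c\,d_k^2$.

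\emph{Closing the recursion; passage to the iterates.} If $d_k^2\le c'\varepsilon_k$ the first recursion reads $\varepsilon_{k+1}\le(1-c)\varepsilon_k$, which is the asserted geometric decay. The \textbf{main obstacle} is precisely that the localization above only yields $d_k\le c\,\varepsilon_k^{1/4}$, which is too weak to reach this regime directly (it would merely give $\varepsilon_{k+1}\lesssim\varepsilon_k^{1/2}$). Overcoming this requires, along the lines of~\cite{WalterPDAP}, exploiting that the active set keeps accumulating atoms clustering at each $\bar t_\ell$ and that the repeated insertion of $\widehat{v}^k$ at the global maximum of $|p_k|$ progressively refines the best active atom near every $\bar t_\ell$; quantitatively one tracks a Lyapunov functional coupling $\varepsilon_k$ with $\sum_\ell\dist(\bar t_\ell,\supp u'_k)^2$ and shows it contracts — this is the technically delicate point. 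Once $\varepsilon_k\le c\zeta^k$ is available, the localization gives $d_k\le c\,\zeta^{k/4}$, so the active atoms converge geometrically to $\{(\bar t_\ell,\bar{\mathbf{v}}_\ell)\}$; since the reduced map $(\mu,c)\mapsto F(\widehat{\mathbf{K}}(\mu,c))$ of Corollary~\ref{coroll:uniqueness} is strongly convex by condition~\textbf{A2} and this property is stable under $O(d_k)$-perturbations of its atoms, the optimal magnitudes and offset of~\eqref{def:subprobjumps} converge geometrically to $(\bar\mu,\bar C)$. Combined with the Lipschitz dependence of the atoms $B(\mathbf{v}\delta_t,0)$ on $(\mathbf{v},t)$ in $L^1(I;\R^d)$, cf.~\eqref{eq:estforindic}, this yields $\lonenorm{u_k-\bar u}+|\mnorm{u'_k}-\mnorm{\bar u'}|\le c\,\zeta^{k/4}$, i.e. the claim after relabelling~$\zeta$.
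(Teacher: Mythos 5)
Your overall strategy --- localize the active atoms and the candidate jump near $\{\bar t_i\}^N_{i=1}$, transfer mass locally onto the new candidate, derive a one-step descent recursion, and then pass from the residual to the iterates --- matches the paper's, but you do not close the central step, and the place where you stop is exactly where the paper's actual work lies. Your localization bounds each atom individually via $\beta-\langle\bar p, v^k_j\rangle\le\cnorm{p_k-\bar p}\le c\,\varepsilon_k^{1/2}$, which only gives $d_k\le c\,\varepsilon_k^{1/4}$, and you then concede that this is too weak to produce a contraction and defer to an unproven multi-step/Lyapunov-functional argument coupling $\varepsilon_k$ with $\sum_\ell\dist(\bar t_\ell,\supp u'_k)^2$. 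That deferred step is the theorem; as written, the proof has a genuine gap, and everything in your final paragraph (geometric convergence of the atoms, of the magnitudes, and of $\|u_k-\bar u\|_{L^1}$, $|\mnorm{u'_k}-\mnorm{\bar u'}|$) is conditional on it.

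The paper avoids the $\varepsilon_k^{1/4}$ loss with two sharper estimates, after which a single-step contraction suffices and no Lyapunov construction is needed. First, in Lemma~\ref{lem:quantit} one does not bound $\beta-\langle\bar p,v^k_j\rangle$ atomwise by $\cnorm{p_k-\bar p}$ but sums against the masses: $\sum_j\mu^k_j\bigl(\beta-\langle\bar p,v^k_j\rangle\bigr)=\beta\sum_j\mu^k_j-\langle\bar p,u'_k\rangle\le\widehat r_j(u_k)$ by convexity of $F$ and the optimality conditions for $\bar u$; combined with Lemma~\ref{lem:quadgrowthlinearfunc} and Jensen's inequality this yields the mass-weighted bound $\sum_i\sum_{j\in A^i_k}\mu^k_j\bigl(|t^k_j-\bar t_i|+|\mathbf{v}^k_j-\bar{\mathbf{v}}_i|_{\R^d}\bigr)\le c\sqrt{\widehat r_j(u_k)}$, i.e.\ order $\varepsilon_k^{1/2}$ where you only get $\varepsilon_k^{1/4}$. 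Second, for the candidate jump Proposition~\ref{prop:localmaximizers} bootstraps: with $D_k=|\hat t_k-\bar t_{\widehat{\imath}}|+|p_k(\hat t_k)/\cnorm{p_k}-\bar{\mathbf{v}}_{\widehat{\imath}}|_{\R^d}$ one has $\tfrac{\gamma_1}{2}D_k^2\le\beta-\langle\bar p,\widehat v^k\rangle\le(\nabla F(\bar y)-\nabla F(y_k),\mathcal K(\bar v_{\widehat{\imath}}-\widehat v^k,0))_Y\le c\sqrt{r_j(u_k)}\,D_k$, hence $D_k\le c\sqrt{r_j(u_k)}$. With these, the local surrogate $\widehat u_{k,s}$ (mass moved only within the cluster $A^{\widehat{\imath}}_k$, total lumped mass conserved so the penalty term cancels, Lemma~\ref{lem:propsofhatu}) gives $\ynorm{K(u_k-\widehat u_{k,s})}\le c\,s\sqrt{\widehat r_j(u_k)}$, so the quadratic term in the descent estimate is $O(s^2\,\widehat r_j(u_k))$ and matches the linear gain $-s(\min_i\bar\mu_i/2M_0)\,\widehat r_j(u_k)$ obtained from~\eqref{upperforres} and Lemma~\ref{lem:convlocalized}; minimizing over $s\in[0,1]$ yields the contraction of Lemma~\ref{lem:fundamentallemma} and Theorem~\ref{thm:linconvergence}. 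The passage to the iterates is then also done differently than you sketch: the paper estimates $|\mnorm{u'_k}-\mnorm{\bar u'}|$ and $\|u_k-\bar u\|_{L^1}$ directly by $c\sqrt{\widehat r_j(u_k)}$ plus the lumped-coefficient and offset errors (Lemmas~\ref{lem:estofnorms}, \ref{lem:estofl1}), and controls the latter through the injectivity of the operator $\widehat{\mathbf{K}}$ from Corollary~\ref{coroll:uniqueness} (Proposition~\ref{prop:convofjumpheights}), rather than through a perturbation-stability argument for the strong convexity of the reduced problem.
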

Since the proof of this improved convergence behaviour is rather technical we give a short outline before going into detail. Utilizing the strict convergence of~$u_k$ towards~$\bar{u}$ as well as the isolation of the global extrema of~$\bar{p}$ we conclude that the iterate~$u_k$ only jumps in the vicinity of~$\{\bar{t}_i\}^N_{i=1}$. More in detail, for sufficiently large~$k$, these observations yield a partition of~$\{1,\dots,N_k\}$ into nonempty, pairwise disjoint sets~$A^i_k$,~$i=1,\dots,N$, such that
\begin{align*}
 (\mu^k_i, \mathbf{v}^k_j \delta_{t^k_j})\in \mathcal{A}_k,~j \in A^i_k \Rightarrow~t^k_j \in (\bar{t}_i-R, \bar{t}_i +R).
\end{align*}
Moreover the "closedness" of the jumps~$v^k_j$,~$j\in A^i_k$, and the optimal one~$\bar{v}_i$,  i.e. the distance between the positions~$t^k_j$ and~$\bar{t}_i$ as well as the misfit between the associated directions~$\mathbf{v}^k_i-\bar{\mathbf{v}}_i$, can be quantified in terms of the auxiliary residual~$\widehat{r}_j(u_k)$, see Lemma~\ref{lem:quantit}. Similarly, in Proposition~\ref{prop:localmaximizers}, we show that the new candidate jump~$\widehat{v}^k$, see step 5. in Algorithm~\ref{alg:accgcg}, lies in the vicinity of some~$\bar{t}_{\widehat{\imath}} \in \{\bar{t}_i\}^N_{i=1}$. Finally, as in the proof of Lemma~\ref{lem:descent}, we then rely on an auxiliary iterate~$\widehat{u}_{k,s}=B(\widehat{u}'_{k,s},c^k)$,~$s\in(0,1)$, where
\begin{align*}
\widehat{u}'_{k,s}&= (1-s)\sum_{j \in A^{\widehat{\imath}}_k} \mu^k_j v^k_j+ s\left( \sum_{j \in A^{\widehat{\imath}}_k} \mu^k_j \right)\frac{p_k(\hat{t}_k)}{\cnorm{p_k}} \delta_{\hat{t}_k}+ \sum^N_{\substack{i=1, \\i \neq \widehat{\imath}}} \sum_{j \in A^i_k} \mu^k_j v^k_j
\end{align*}
The descent properties of this auxiliary iterate are then exploited in Lemma~\ref{lem:fundamentallemma} to prove an improved version of the descent lemma, Lemma~\ref{lem:descent}, which finally yields the linear convergence of~$r_j(u_k)$. The linear convergence of~$u_k$ w.r.t to the strict topology is then concluded as a by-product, see Lemmas~\ref{lem:estofnorms} and~\ref{lem:estofl1}.
\begin{remark}
To finish this section let us briefly compare~$\widehat{u}_{k,s}$ with the auxiliary iterate~$u_{k,s}=B(u'_{k,s},c^k)$, where
\begin{align*}
u'_{k,s}= sM_0 \widehat{v}_k+ (1-s) \sum^{N_k}_{i=1} \sum_{j \in A^i_k} \mu^k_iv^k_i=(1-s)u'_k+ s M_0  \widehat{v}_k
\end{align*}
which is used in the proof of Lemma~\ref{lem:descent}. Loosely speaking, to obtain~$u'_{k,s}$ we take "mass" from~\textit{all} Dirac Delta functionals in~$u'_k$, i.e. the height of all jumps in the iterate is decreased, and move it to the new candidate jump~$\widehat{v}^k$. In contrast, the construction of~$\widehat{u}_{k,s}$ can be viewed as a~\textit{local} update of~$u_k$ since mass is only taken away from those jumps~$u^k_j$ supported in~$(\bar{t}_{\widehat{\imath}}-R, \bar{t}_{\widehat{\imath}}+R)$. On the complement,~$I \setminus(\bar{t}_{\widehat{\imath}}-R, \bar{t}_{\widehat{\imath}}+R)$, we have~$\widehat{u}_{k,s}=u_k$. This allows for a refined analysis of the descent achieved by Algorithm~\ref{alg:accgcg} in each iteration.
\end{remark}
\subsubsection*{Linear convergence of the residual}
For the sake of readability we tacitly assume that Algorithm~\ref{alg:accgcg} does not converge after finitely many steps. The following proposition summarizes some immediate consequences of this assumption.
\begin{prop}
Assume that Algorithm~\ref{alg:accgcg} does not terminate after finitely many steps. Then there holds~$u_k \rightharpoonup^s  \bar{u}$,~$\cnorm{p_k} \geq \beta$ and~$\mathcal{A}_k \neq \emptyset$ for all~$k\in\N$ large enough.
\end{prop}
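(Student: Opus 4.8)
The plan is to read off all three assertions from the sublinear convergence theory combined with the two non-degeneracy hypotheses; no serious difficulty is expected, so the argument will be short. First I would invoke Corollary~\ref{coroll:uniqueness}: under Assumption~\ref{ass:nondegeneratesource1} the minimizer~$\bar u$ of~\eqref{def:bvprob} is unique, with~$\bar u' = \sum_{i=1}^N \bar\mu_i \bar{\mathbf v}_i \delta_{\bar t_i}$ and~$\bar{\mathbf v}_i = \bar p(\bar t_i)/\beta$. Since, by hypothesis, Algorithm~\ref{alg:accgcg} does not stop after finitely many steps, the dichotomy in Theorem~\ref{thm:slowconvalg} leaves only its second alternative, so~$r_j(u_k)\to 0$ and, the minimizer being unique,~$u_k \rightharpoonup^s \bar u$ holds along the whole sequence. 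This is the first claim.

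For~$\cnorm{p_k}\geq\beta$ I would argue directly from the mechanics of the algorithm: reaching Step~5 at iteration~$k$ means the stopping test~$\cnorm{p_k}\leq\beta$ in Steps~3--4 failed, hence in fact~$\cnorm{p_k}>\beta$ for every~$k\geq 1$; alternatively, Proposition~\ref{prop:optimalitysub} shows that~$\cnorm{p_k}\le\beta$ would make~$u_k$ a minimizer and force termination.

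The only claim needing a genuine (but short) argument is~$\mathcal{A}_k\neq\emptyset$. Here I would use the strict-complementarity part of Assumption~\ref{ass:nondegeneratesource2}, which gives~$\bar\mu_i>0$ for~$i=1,\dots,N$, with~$N\geq 1$ since the value~$\cnorm{\bar p}=\beta>0$ is attained in~$I$ (recall~$\bar p\in\mathcal{C}(\bar I;\R^d)$ with~$\bar p(0)=\bar p(T)=0$). Because~$|\bar{\mathbf v}_i|_{\R^d}=|\bar p(\bar t_i)|_{\R^d}/\beta=1$ and the~$\bar t_i$ are pairwise distinct, this yields~$\mnorm{\bar u'}=\sum_{i=1}^N\bar\mu_i>0$. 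The strict convergence from the first step contains~$\mnorm{u'_k}\to\mnorm{\bar u'}>0$, so there is~$k_0\in\N$ with~$\mnorm{u'_k}>0$ for all~$k\geq k_0$; by the convention attached to~\eqref{def:formofuk} — namely that~$\mathcal{A}_k=\emptyset$ forces~$u_k=c^k\chi_0$ and hence~$u'_k=0$ — we conclude~$\mathcal{A}_k\neq\emptyset$ for all~$k\geq k_0$.

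I do not expect any real obstacle: the proposition is essentially bookkeeping built on Theorem~\ref{thm:slowconvalg}, Corollary~\ref{coroll:uniqueness} and Proposition~\ref{prop:optimalitysub}. The only point demanding a little care is confirming~$\mnorm{\bar u'}>0$, i.e. that the unique minimizer is genuinely non-constant, which is precisely what the positivity~$\bar\mu_i>0$ in Assumption~\ref{ass:nondegeneratesource2} delivers.
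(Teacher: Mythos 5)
Your proof is correct and takes essentially the same route as the paper: uniqueness from Corollary~\ref{coroll:uniqueness} combined with Theorem~\ref{thm:slowconvalg} gives $u_k \rightharpoonup^s \bar{u}$, and the non-vanishing of $\bar{u}'$ then forces $u'_k\neq 0$, hence $\mathcal{A}_k\neq\emptyset$ and $\cnorm{p_k}\geq\beta$, for all large $k$. The only (harmless) deviations are that you read off $\cnorm{p_k}>\beta$ directly from the failed stopping test instead of going through Proposition~\ref{prop:optimalitysub}, and that you make explicit, via $\bar\mu_i>0$ and $N\geq 1$, why $\mnorm{\bar{u}'}>0$ — a point the paper's proof uses but leaves implicit.
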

\begin{proof}
Since the minimizer to~\eqref{def:bvprob} is unique, see Corollary~\ref{coroll:uniqueness}, we get~$u_k \rightharpoonup^s \bar{u}$ from Theorem~\ref{thm:slowconvalg}. In particular, this implies~$u'_k \rightharpoonup^* \bar{u}'$ in~$\Moc$ and thus~$u'_k \neq 0$ for~$k$ large enough. This also yields~$\mathcal{A}_k \neq \emptyset$ and~$\cnorm{p_k}\geq \beta$, see Proposition~\ref{prop:optimalitysub}.
\end{proof}
Now we use the isolation of the global extrema of~$\bar{p}$, see~\eqref{eq:isolofvals}, as well as the uniform convergence of~$p_k$ from Proposition~\ref{prop:estforstates} to conclude that~$p_k$ is small outside of the intervals~$ (\bar{t}_i-R,\bar{t}_i+R)$.
\begin{coroll} \label{coroll:isolvalit}
Let~$\sigma>0$ and~$R>0$ as in~\eqref{eq:isolofvals} be given. Moreover let~$p_k$ be generated by Algorithm~\ref{alg:accgcg}. For all~$k\in\N$ large enough we have
\begin{align*}
|p_k(t)|_{\R^d} \leq \beta-\frac{\sigma}{2} \quad \forall t\in \bar{I} \setminus \bigcup^N_{i=1} (\bar{t}_i-R,\bar{t}_i+R).
\end{align*}
\end{coroll}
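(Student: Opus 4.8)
The plan is to obtain the estimate directly from the uniform convergence $p_k \to \bar p$ in $\mathcal{C}_0(I;\R^d)$, which is already available from Theorem~\ref{thm:slowconvalg} (or, with an explicit rate in $r_j(u_k)$, from Proposition~\ref{prop:estforstates}), combined with the strict separation of values~\eqref{eq:isolofvals}. The point is that~\eqref{eq:isolofvals} gives a fixed gap of size $\sigma$ between $|\bar p|_{\R^d}$ and $\beta$ away from the extrema, and uniform convergence lets us absorb at most half of this gap into $p_k - \bar p$.

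Concretely, I would fix $\sigma>0$ and $R>0$ as in~\eqref{eq:isolofvals} and set $\Sigma \coloneqq \bar I \setminus \bigcup_{i=1}^N (\bar t_i - R, \bar t_i + R)$, so that~\eqref{eq:isolofvals} reads $|\bar p(t)|_{\R^d} \le \beta - \sigma$ for all $t \in \Sigma$. Since $\cnorm{p_k-\bar p} \to 0$, there is $k_0 \in \N$ (depending on $\sigma$, hence on $R$, but not on $t$) with $\cnorm{p_k-\bar p} \le \sigma/2$ for all $k \ge k_0$. Then for every such $k$ and every $t \in \Sigma$ the triangle inequality in $\R^d$ yields
\begin{align*}
|p_k(t)|_{\R^d} \le |\bar p(t)|_{\R^d} + |p_k(t)-\bar p(t)|_{\R^d} \le (\beta-\sigma) + \cnorm{p_k-\bar p} \le \beta - \frac{\sigma}{2},
\end{align*}
which is the asserted bound.

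There is no real obstacle here: the only ingredients are the previously established uniform convergence of the dual variables and the elementary observation~\eqref{eq:isolofvals}, and the argument is essentially a one-line triangle inequality. The only point worth stating explicitly is the uniformity in $t \in \Sigma$, i.e.\ that the threshold $k_0$ can be chosen independently of $t$, which is immediate because the control on $p_k - \bar p$ is in the supremum norm on $\mathcal{C}_0(I;\R^d)$.
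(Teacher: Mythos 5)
Your argument is correct and coincides with the paper's own proof: both rest on the separation estimate~\eqref{eq:isolofvals} together with the uniform convergence $\cnorm{p_k-\bar p}\to 0$ (Proposition~\ref{prop:estforstates}), followed by the same triangle inequality to absorb half the gap $\sigma$. Your explicit remark on the uniformity of $k_0$ in $t$ is a fair clarification but does not change the substance.
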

\begin{proof}
Choose an arbitrary but fixed~$t\in \bar{I} \setminus \bigcup^N_{i=1} (\bar{t}_i-R,\bar{t}_i+R)$. We estimate
\begin{align*}
|p_k(t)|_{\R^d} \leq |\bar{p}(t)|_{\R^d}+ ||p_k(t)|_{\R^d}-|\bar{p}(t)|_{\R^d}| \leq \beta-\sigma+ \cnorm{p_k-\bar{p}} \leq \beta- \frac{\sigma}{2}
\end{align*}
for all~$k\in\N$ large enough. Here we use~\eqref{eq:isolofvals} in the second inequality and the uniform convergence of~$p_k$, see Proposition~\ref{prop:estforstates}, in the last one.
\end{proof}
Using this estimate we prove that the iterate~$u_k$ solely jumps in the vicinity of the optimal jump positions~$\bar{t}_i$.
\begin{prop} \label{prop:localofoit}
Denote by
\begin{align*}
\mathcal{A}_k= \left\{ (\mu^k_i, v^k_i)\right\}^{N_k}_{i=1}=\left\{ (\mu^k_i, \mathbf{v}^k_i\delta_{t^k_i})\right\}^{N_k}_{i=1}
\end{align*}
the sequence of active sets generated by Algorithm~\ref{alg:accgcg}.
For all~$k\in\N$ large enough there exist pairwise disjoint index sets~$A^i_k$ with~$\bigcup^N_{i=1} A^i_k=\{1,\dots,N_k\}$ and~$t^k_j\in (\bar{t}_i-R, \bar{t}_i+R),~j \in A^i_k$.
\end{prop}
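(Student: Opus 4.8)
The plan is to combine the first-order optimality characterization of the jumps in the active set from Proposition~\ref{prop:optimalitysub} with the smallness of $p_k$ away from the optimal extrema established in Corollary~\ref{coroll:isolvalit}.

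First I would recall that, by Proposition~\ref{prop:optimalitysub}, every jump $(\mu^k_j,\mathbf{v}^k_j\delta_{t^k_j})$ belonging to $\mathcal{A}_k$ satisfies $(p_k(t^k_j),\mathbf{v}^k_j)_{\R^d}=\beta$. Since $|\mathbf{v}^k_j|_{\R^d}=1$, the Cauchy--Schwarz inequality forces $|p_k(t^k_j)|_{\R^d}\geq\beta$ for every $j\in\{1,\dots,N_k\}$. On the other hand, Corollary~\ref{coroll:isolvalit} guarantees that for all $k$ large enough $|p_k(t)|_{\R^d}\leq\beta-\sigma/2<\beta$ for every $t\in\bar{I}\setminus\bigcup_{i=1}^N(\bar{t}_i-R,\bar{t}_i+R)$. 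Comparing the two bounds shows that, for all such $k$, each position $t^k_j$ must lie in $\bigcup_{i=1}^N(\bar{t}_i-R,\bar{t}_i+R)$.

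Since the intervals $(\bar{t}_i-R,\bar{t}_i+R)$, $i=1,\dots,N$, were chosen pairwise disjoint right after~\eqref{eq:isolofvals}, each $t^k_j$ belongs to exactly one of them. Defining
\begin{align*}
A^i_k:=\left\{\,j\in\{1,\dots,N_k\}\;|\;t^k_j\in(\bar{t}_i-R,\bar{t}_i+R)\,\right\},\qquad i=1,\dots,N,
\end{align*}
therefore yields pairwise disjoint index sets with $\bigcup_{i=1}^N A^i_k=\{1,\dots,N_k\}$ and the asserted localization $t^k_j\in(\bar{t}_i-R,\bar{t}_i+R)$ for $j\in A^i_k$. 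The argument is elementary; the only point requiring care is the threshold for ``$k$ large enough'', which must be taken larger than the indices needed for $u_k\rightharpoonup^s\bar{u}$ (so that Proposition~\ref{prop:estforstates}, and hence Corollary~\ref{coroll:isolvalit}, applies) and for $\mathcal{A}_k\neq\emptyset$. No genuine obstacle is expected; the nonemptiness of the $A^i_k$, should it be needed in the sequel, would instead follow from $u'_k\rightharpoonup^*\bar{u}'$ together with the strict complementarity $\bar{\mu}_i>0$ from Assumption~\ref{ass:nondegeneratesource2}.
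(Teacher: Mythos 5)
Your argument is correct and coincides with the paper's own proof: both use the optimality condition $(p_k(t^k_j),\mathbf{v}^k_j)_{\R^d}=\beta$ from Proposition~\ref{prop:optimalitysub} to deduce $|p_k(t^k_j)|_{\R^d}\geq\beta$, then exclude positions outside $\bigcup_i(\bar{t}_i-R,\bar{t}_i+R)$ via Corollary~\ref{coroll:isolvalit} and sort the indices into the pairwise disjoint intervals. Your added remarks on the threshold for $k$ and on the (deferred) nonemptiness of the $A^i_k$ are consistent with how the paper handles these points later.
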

\begin{proof}
Let~$(\mu^k_j, v^k_j)=(\mu^k_j, \mathbf{v}^k_j\delta_{t^k_j})\in \mathcal{A}_k$ be arbitrary.
Utilizing the first order optimality condition for the subproblem~\eqref{def:subprobjumps}, see Proposition~\ref{prop:optimalitysub}, we have
\begin{align*}
\beta= (p_k(t^k_j),\mathbf{v}^k_j)_{\R^d} \leq |p_k(t^k_j)|_{\R^d}.
\end{align*}
Thus, together with Corollary~\ref{coroll:isolvalit}, we conclude~$t^k_j \in (\bar{t}_i-R,\bar{t}_i+R)$ for exactly one~$i \in\{1,\dots,N\}$. The existence of the index sets~$A^i_k$ is now imminent.
\end{proof}
Next we prove that the sets~$A^i_k$ are nonempty for large~$k\in\N$. This means that each optimal jump~$\bar{v}_i$ is approximated by at least one jump in the iterate~$u_k$. Moreover the "lumped" height~$\sum_{j \in A^i_k} \mu^k_j$ of all jumps~$v^k_j,~j \in A^i_k$, converges to the optimal jump height~$\bar{\mu}_i$. For this purpose define the restricted measures
\begin{align} \label{def:restricmeasures}
U'_{k,i} \coloneqq \sum_{j \in A^i_k} \mu^k_j v^k_j.
\end{align}
\begin{lemma} \label{lem:convlocalized}
Let~$U'_{k,i}$ be defined as in~\eqref{def:restricmeasures}. Then there holds
\begin{align*}
U'_{k,i} \rightharpoonup^* \bar{\mu}_i \bar{v}_i \delta_{\bar{t}_i},~\sum_{j \in A^i_k} \mu^k_j \rightarrow \bar{\mu}_i.
\end{align*}
In particular this implies~$A^i_k \neq \emptyset$ for all~$k\in\N$ and~$\sum^{N_k}_{i=1} \mu^k_i \rightarrow \mnorm{\bar{u}'}$.
\end{lemma}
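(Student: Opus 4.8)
The plan is to combine the strict convergence $u_k \rightharpoonup^s \bar u$ of the whole sequence, which is available here by Corollary~\ref{coroll:uniqueness} and Theorem~\ref{thm:slowconvalg}, with the localization of the jumps from Proposition~\ref{prop:localofoit}, and to pin down the weak* limit of each $U'_{k,i}$ by a compactness argument combined with a mass-balance identity --- rather than by tracking positions and directions, which are not yet known to converge at this stage. Throughout, $k$ is taken large enough for Proposition~\ref{prop:localofoit} to apply, so that $u'_k = \sum_{i=1}^N U'_{k,i}$ with $\supp U'_{k,i} \subset (\bar t_i - R, \bar t_i + R)$ and these intervals pairwise disjoint. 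As a first step I would record that, since the algorithm does not terminate after finitely many steps, Theorem~\ref{thm:slowconvalg} yields $\widehat r_j(u_k) \to 0$ and $K u_k \to \bar y$ in $Y$, hence $F(Ku_k) \to F(\bar y)$; together with $\min_{u \in \BV} j(u) = F(\bar y) + \beta \mnorm{\bar u'}$ and the representation $\bar u' = \sum_{i=1}^N \bar\mu_i \bar{\mathbf{v}}_i \delta_{\bar t_i}$ with $|\bar{\mathbf{v}}_i|_{\R^d} = 1$ and pairwise distinct $\bar t_i$ (Corollary~\ref{coroll:uniqueness}), the definition $\widehat r_j(u_k) = F(Ku_k) + \beta \sum_{i=1}^{N_k} \mu^k_i - \min_{u\in\BV} j(u)$ gives $\sum_{i=1}^{N_k} \mu^k_i \to \mnorm{\bar u'} = \sum_{i=1}^N \bar\mu_i$. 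This already settles the last assertion of the lemma and shows that the $U'_{k,i}$ are uniformly bounded in $\Moc$, since $\mnorm{U'_{k,i}} \leq \sum_{j \in A^i_k} \mu^k_j \leq \sum_{l=1}^{N_k} \mu^k_l$.

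Next I would argue subsequentially: given any subsequence, the sequential Banach--Alaoglu theorem provides a further subsequence along which $U'_{k,i} \rightharpoonup^* \nu_i$ in $\Moc$ for some $\nu_i$, and testing against functions supported in $\bar I \setminus [\bar t_i - R, \bar t_i + R]$ shows $\supp \nu_i \subset [\bar t_i - R, \bar t_i + R]$. For $\varphi \in \mathcal{C}_0(I;\R^d)$ compactly supported in the open interval $(\bar t_i - R, \bar t_i + R)$, disjointness of the supports of $U'_{k,l}$, $l \neq i$, gives $\langle \varphi, u'_k \rangle = \langle \varphi, U'_{k,i} \rangle$; letting $k \to \infty$ and using $u'_k \rightharpoonup^* \bar u'$ together with $\bar t_l \notin (\bar t_i - R, \bar t_i + R)$ for $l \neq i$ yields $\langle \varphi, \nu_i \rangle = \bar\mu_i (\varphi(\bar t_i), \bar{\mathbf{v}}_i)_{\R^d}$. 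Hence $\nu_i$ coincides with $\bar\mu_i \bar v_i = \bar\mu_i \bar{\mathbf{v}}_i \delta_{\bar t_i}$ on $(\bar t_i - R, \bar t_i + R)$, so in particular $\mnorm{\nu_i} \geq \bar\mu_i$.

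It then remains to exclude additional mass of $\nu_i$ near the endpoints $\bar t_i \pm R$ and to identify the lumped heights. Setting $a_{k,i} \coloneqq \sum_{j \in A^i_k} \mu^k_j \geq \mnorm{U'_{k,i}}$ and using weak* lower semicontinuity of $\mnorm{\cdot}$, one has $\liminf_k a_{k,i} \geq \liminf_k \mnorm{U'_{k,i}} \geq \mnorm{\nu_i} \geq \bar\mu_i$, while $\sum_{i=1}^N a_{k,i} = \sum_{i=1}^{N_k} \mu^k_i \to \sum_{i=1}^N \bar\mu_i$; an elementary argument for nonnegative real sequences then forces $a_{k,i} \to \bar\mu_i$ for every $i$, and consequently $\mnorm{U'_{k,i}} \to \bar\mu_i$ as well (it is squeezed between $\liminf_k \mnorm{U'_{k,i}} \geq \bar\mu_i$ and $\mnorm{U'_{k,i}} \leq a_{k,i} \to \bar\mu_i$). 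In turn $\mnorm{\nu_i} \leq \bar\mu_i$, so $\mnorm{\nu_i} = \bar\mu_i$ equals the variation of the restriction of $\nu_i$ to $(\bar t_i - R, \bar t_i + R)$, which leaves no room for mass at the endpoints; hence $\nu_i = \bar\mu_i \bar v_i$. Since this limit does not depend on the chosen subsequence, the full sequences converge, i.e. $U'_{k,i} \rightharpoonup^* \bar\mu_i \bar v_i$ in $\Moc$ and $\sum_{j \in A^i_k} \mu^k_j \to \bar\mu_i$, and $A^i_k \neq \emptyset$ for all sufficiently large $k$ follows from $\bar\mu_i > 0$ (Assumption~\ref{ass:nondegeneratesource2}).

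The main obstacle is precisely this last identification. At this stage neither $t^k_j \to \bar t_i$ nor $\mathbf v^k_j \to \bar{\mathbf{v}}_i$ for $j \in A^i_k$ is available --- that is the content of the later Lemma~\ref{lem:quantit} --- so the weak* limit of $U'_{k,i}$ cannot be computed by evaluating it on its atoms and must be pinned down abstractly, and one must genuinely rule out the escape of mass toward the endpoints $\bar t_i \pm R$. This is where the \emph{strictness} of the convergence enters in an essential way: it is the norm part $\mnorm{u'_k} \to \mnorm{\bar u'}$, not merely $u'_k \rightharpoonup^* \bar u'$, that powers the mass-balance argument and closes the gap.
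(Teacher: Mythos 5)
Your proof is correct, but it reaches the second assertion (convergence of the lumped heights) by a genuinely different mechanism than the paper. The paper fixes a cutoff $\chi\in\mathcal{C}_0(I)$ with $\chi\equiv 1$ on $(\bar t_i-R,\bar t_i+R)$ and $\chi\equiv 0$ on the other intervals and tests $u'_k$ against $\chi\varphi$ and against $\chi p_k$: the first pairing gives the weak* limit of $U'_{k,i}$ in one line (no subsequence extraction, no discussion of mass at the endpoints, since $\chi\varphi$ is an admissible test function for the full measure $u'_k$), while the second pairing, combined with the subproblem optimality condition $\langle p_k, v^k_j\rangle=\beta$ from Proposition~\ref{prop:optimalitysub}, the analogous condition $\langle\bar p,\bar v_i\rangle=\beta$ for $\bar u$, and the uniform convergence $p_k\to\bar p$, yields $\beta\sum_{j\in A^i_k}\mu^k_j=\langle\chi p_k,u'_k\rangle\to\langle\chi\bar p,\bar u'\rangle=\beta\bar\mu_i$ directly. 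You instead never reuse the dual variable: you extract $\sum_{i=1}^{N_k}\mu^k_i\to\mnorm{\bar u'}$ from $\widehat r_j(u_k)\to 0$ and $F(Ku_k)\to F(\bar y)$, identify each subsequential weak* limit only on the open interval, and then close the argument by weak* lower semicontinuity of the total variation plus a pigeonhole/mass-balance over the $N$ clusters, which simultaneously forces $\sum_{j\in A^i_k}\mu^k_j\to\bar\mu_i$ and rules out mass escaping to $\bar t_i\pm R$; all steps check out, including the elementary $\limsup$--$\liminf$ squeeze and the subsequence principle at the end. What each route buys: the paper's duality argument is shorter and avoids the endpoint issue altogether, but leans on Propositions~\ref{prop:optimalitysub} and~\ref{prop:estforstates}; yours is more self-contained (only Theorem~\ref{thm:slowconvalg} and the localization of Proposition~\ref{prop:localofoit} enter) and makes explicit that it is the norm-type convergence encoded in $\widehat r_j(u_k)\to 0$, not mere weak* convergence, that prevents loss of mass --- at the price of the extra compactness bookkeeping. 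Note that, exactly as in the paper, your conclusion $A^i_k\neq\emptyset$ uses $\bar\mu_i>0$ from Assumption~\ref{ass:nondegeneratesource2} and holds for all sufficiently large $k$ (the "for all $k\in\N$" in the statement is to be read this way).
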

\begin{proof}
Let~$i=1,\dots,N$ be arbitrary but fixed
and let~$\chi \in \mathcal{C}_0(I)$ be such that~$\chi(t)=1$,~$t \in (\bar{t}_i-R,\bar{t}_i+R)$, as well as $\chi(t)=0$,~$t \in (\bar{t}_j-R,\bar{t}_j+R)$,~$j\neq i$. Moreover denote by~$\varphi \in \mathcal{C}_0(I;\R^d)$ an arbitrary test function. Then we have~$\chi\varphi \in \mathcal{C}_0(I;\R^d)$ and thus
\begin{align*}
\langle \varphi, U'_{k,i} \rangle=\langle \chi \varphi, u'_k\rangle \rightarrow \langle \chi \varphi, \bar{u} \rangle=\langle \varphi, \bar{\mu}_i \bar{u}_i\rangle
\end{align*}
due to~$u'_k \rightharpoonup^* \bar{u}$, see Theorem~\ref{thm:slowconvalg}.
Consequently~$U'_{k,i} \rightharpoonup^* \bar{\mu}_i \bar{u}_i$. Similarly we conclude
\begin{align*}
\beta \sum_{j \in A^i_k} \mu^k_j =\langle \chi p_k, u'_k \rangle=\langle \chi \bar{p}, \bar{u}' \rangle= \beta \bar{\mu}_i
\end{align*}
using the first order optimality conditions for~$u_k$ and~$\bar{u}$, see Proposition~\ref{prop:optimalitysub} and Theorem~\ref{thm:firstorderoptimality}, respectively, as well as~$p_k \rightarrow \bar{p}$ in~$\mathcal{C}_0(I;\R^d)$, see Proposition~\ref{prop:estforstates}. Thus~$A^i_k \neq \emptyset$ for all~$k\in\N$ large enough.
The last statement now follows due to
\begin{align*}
\sum^{N_k}_{i=1} \mu^k_i= \sum^N_{i=1} \sum_{j \in A^i_k} \mu^k_j.
\end{align*}
\end{proof}
Up to now we have only given qualitative statements on the approximation of~$\bar{v}_i$ by jumps~$v^k_j$ of the iterate~$u_k$. In order to improve on the convergence result of Theorem~\ref{thm:slowconvalg} we also need a quantitative estimate for this observation. For this purpose we recall that both,~$\bar{v}_i$ and~$v^k_j$, are vector-valued Dirac Delta functionals. Thus, a suitable way to compare these jumps is given in terms of the differences~$t^k_j-\bar{t}_i$ and~$\mathbf{v}^k_j-\bar{\mathbf{v}}_i$ of jump positions and directions, respectively. This can be quantified using the quadratic growth behaviour of~$\bar{p}$ from Lemma~\ref{lem:quadgrowthlinearfunc}.
\begin{lemma} \label{lem:quantit}
There holds
\begin{align} \label{eq:l2likeest}
\sum^{N}_{i=1} \sum_{j \in A^i_k} \mu^k_j \left( |t^k_j-\bar{t}_i|+|\mathbf{v}^k_j-\bar{\mathbf{v}}_i|_{\R^d} \right) \leq c \sqrt{\widehat{r}_j(u_k)}
\end{align}
for all~$k\in\N$ large enough.
\end{lemma}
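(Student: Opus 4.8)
The plan is to derive~\eqref{eq:l2likeest} by combining three ingredients: convexity of~$F$ together with the optimality conditions for~$u_k$ and~$\bar{u}$, the quadratic growth of the linear functional induced by~$\bar{p}$ from Lemma~\ref{lem:quadgrowthlinearfunc}, and a weighted Cauchy--Schwarz inequality. The decisive point is that the left-hand side must be controlled \emph{through the full weighted sum} rather than jump-by-jump: estimating each~$|t^k_j-\bar t_i|+|\mathbf{v}^k_j-\bar{\mathbf{v}}_i|_{\R^d}$ separately against~$\cnorm{p_k-\bar p}\lesssim r_j(u_k)^{1/2}$ (Proposition~\ref{prop:estforstates}) and then multiplying by~$\sum_i\mu^k_i$ would only produce a bound of order~$r_j(u_k)^{1/4}$, which is too weak. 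The square-root rate is recovered only after recognizing that~$\beta\sum_i\mu^k_i-\langle\bar p,u'_k\rangle$ is a \emph{global} dual gap dominated by~$\widehat{r}_j(u_k)$ itself and then applying Cauchy--Schwarz.

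First I would establish the auxiliary lower bound
\begin{align*}
\widehat{r}_j(u_k) \geq \beta\sum_{i=1}^{N_k}\mu^k_i - \langle \bar{p}, u'_k\rangle .
\end{align*}
To this end, recall that the computation in the proof of Theorem~\ref{thm:firstorderoptimality} identifies~$\bar p=\optom'$ with the solution of~\eqref{def:auxode} for~$\varphi=-K^*\nabla F(\bar y)$, which has vanishing mean since~$\bar p(T)=0$; hence the duality relation established in the proof of Lemma~\ref{lem:charaofadjB} reads~$\langle\bar p,q\rangle=-(\nabla F(\bar y),\mathcal{K}(q,c))_Y$ for all~$(q,c)\in\Moc\times\R^d$. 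Evaluating this at~$(u'_k,a_{u_k})$ and at~$(\bar u',a_{\bar u})$, and recalling~$\mathcal{K}=K\circ B$ with~$B(u'_k,a_{u_k})=u_k$, yields~$(\nabla F(\bar y),Ku_k)_Y=-\langle\bar p,u'_k\rangle$ and~$(\nabla F(\bar y),\bar y)_Y=-\langle\bar p,\bar u'\rangle=-\beta\mnorm{\bar u'}$, the last identity by~\eqref{eq:poshom}. Convexity of~$F$ gives~$F(Ku_k)\geq F(\bar y)+(\nabla F(\bar y),Ku_k-\bar y)_Y$, and subtracting~$j(\bar u)=F(\bar y)+\beta\mnorm{\bar u'}$ and inserting the two identities produces exactly the displayed bound.

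Next, for~$k$ large enough, Proposition~\ref{prop:localofoit} furnishes the partition with~$t^k_j\in(\bar t_i-R,\bar t_i+R)$ whenever~$j\in A^i_k$, while~$|\mathbf{v}^k_j|_{\R^d}=1$, so that Lemma~\ref{lem:quadgrowthlinearfunc} applies termwise. Writing~$u'_k=\sum_{i=1}^N\sum_{j\in A^i_k}\mu^k_j v^k_j$ one obtains
\begin{align*}
\widehat{r}_j(u_k) \;\geq\; \sum_{i=1}^N\sum_{j\in A^i_k}\mu^k_j\bigl(\beta-\langle\bar p,v^k_j\rangle\bigr) \;\geq\; \gamma_1\sum_{i=1}^N\sum_{j\in A^i_k}\mu^k_j\Bigl(|t^k_j-\bar t_i|^2+|\mathbf{v}^k_j-\bar{\mathbf{v}}_i|^2_{\R^d}\Bigr).
\end{align*}

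Finally, since~$|a|+|b|\le\sqrt2\,(a^2+b^2)^{1/2}$, a Cauchy--Schwarz inequality with weights~$\mu^k_j$ bounds the left-hand side of~\eqref{eq:l2likeest} by~$\sqrt2\bigl(\sum_{i=1}^{N_k}\mu^k_i\bigr)^{1/2}\bigl(\widehat{r}_j(u_k)/\gamma_1\bigr)^{1/2}$, and by Lemma~\ref{lem:convlocalized} the factor~$\sum_{i=1}^{N_k}\mu^k_i$ converges to~$\mnorm{\bar u'}$, hence is bounded by a constant for all~$k$ large enough; this yields the claim. The only genuinely delicate point is the passage described in the first paragraph, namely realizing that the aggregated dual gap — and not each individual term — is what is controlled at order~$\widehat{r}_j(u_k)$; verifying the adjoint identity~$\langle\bar p,u'_k\rangle=-(\nabla F(\bar y),Ku_k)_Y$ and the vanishing of the mean-value contributions is routine given Lemma~\ref{lem:charaofadjB}.
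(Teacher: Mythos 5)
Your proof is correct and follows essentially the same route as the paper: you bound the aggregated dual gap $\beta\sum_{i}\mu^k_i-\langle\bar p,u'_k\rangle$ by $\widehat{r}_j(u_k)$ via convexity of $F$ and the optimality conditions for $\bar u$, apply the quadratic growth of Lemma~\ref{lem:quadgrowthlinearfunc} termwise, and pass to the weighted sum by Cauchy--Schwarz, which is exactly the Jensen step in the paper's argument (with the uniform bound on $\sum_i\mu^k_i$ from Lemma~\ref{lem:convlocalized} playing the same role as the bound $2\mnorm{\bar u'}$ used there).
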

\begin{proof}
Let~$\gamma_1$ denote the constant from Lemma~\ref{lem:quadgrowthlinearfunc}. Applying Jensen's inequality yields
\begin{align*}
\frac{\gamma_1}{2 \sum^{N_k}_{i=1} \mu^k_i}&\left( \sum^{N}_{i=1} \sum_{j \in A^i_k} \mu^k_j \left( |t^k_j-\bar{t}_i|+|\mathbf{v}^k_j-\bar{\mathbf{v}}_i|_{\R^d} \right) \right)^2
\\& \leq \frac{\gamma_1}{2} \sum^{N}_{i=1} \sum_{j \in A^i_k} \mu^k_j \left( |t^k_j-\bar{t}_i|+|\mathbf{v}^k_j-\bar{\mathbf{v}}_i|_{\R^d} \right)^2 \\
& \leq {\gamma_1} \sum^{N}_{i=1} \sum_{j \in A^i_k} \mu^k_j \left( |t^k_j-\bar{t}_i|^2+|\mathbf{v}^k_j-\bar{\mathbf{v}}_i|^2_{\R^d} \right)
 \\& \leq \sum^{N}_{i=1} \sum_{j \in A^i_k} \mu^k_j \left( \beta- \langle \bar{p},v^k_j \rangle \right)=\beta \sum^{N_k}_{i=1} \mu^k_i-\langle\bar{p},u'_k \rangle.
\end{align*}
Moreover, due to the convexity of~$F$ we estimate
\begin{align*}
\widehat{r}_j(u_k)&= F(Ku_k)+ \beta \sum^{N_k}_{i=1} \mu^k_i- F(K\bar{u})-\beta \mnorm{\bar{u}}\\& \geq \beta \sum^{N_k}_{i=1} \mu^k_i- \beta \mnorm{\bar{u}}+ (\nabla F(K\bar{u}),K\bar{u_k}-K\bar{u})_Y.
\end{align*}
Now we rewrite
\begin{align*}
(\nabla F(K\bar{u}),K\bar{u_k}-K\bar{u})_Y- \beta \mnorm{\bar{u}}= \langle \bar{p},\bar{u}'-u'_k \rangle - \beta \mnorm{\bar{u}}=-\langle \bar{p},u'_k\rangle.
\end{align*}
using the first order optimality conditions for~$\bar{u}$, see Theorem~\ref{thm:firstorderoptimality}.
Summarizing all previous observations we arrive at
\begin{align*}
\left( \sum^{N}_{i=1} \sum_{j \in A^i_k} \mu^k_j \left( |t^k_j-\bar{t}_i|+|\mathbf{v}^k_j-\bar{u}_i|_{\R^d} \right) \right)^2 \leq \frac{2 \sum^{N_k}_{i=1} \mu^k_i}{\gamma_1}\, \widehat{r}_j(u_k) \leq \frac{4 \mnorm{\bar{u}}}{\gamma_1}\, \widehat{r}_j(u_k).
\end{align*}
Taking the square root on both sides of the inequality yields the claimed statement.
\end{proof}
A similar estimate holds for the new candidate jump~$\widehat{v}^k$ computed in step 3. of Algorithm~\ref{alg:accgcg}.
\begin{prop} \label{prop:localmaximizers}
Let~$\widehat{v}^k= (p_k(\hat{t}_k)/\cnorm{p_k})\delta_{\bar{t}_k}$ with~$|p_k(\hat{t}_k)|_{\R^d}=\cnorm{p_k}$ be given. For all~$k\in\N$ large enough there is a k-dependent index~$\widehat{\imath}\in \{1, \dots,N\}$ such that~$\hat{t}_k \in A^{\widehat{\imath}}_k$ and
\begin{align} \label{eq:estonpoints}
|\hat{t}_k-\bar{t}_{\widehat{\imath}}|+|p_k(\hat{t}_k)/\cnorm{p_k}-\bar{\mathbf{v}}_{\widehat{\imath}}|_{\R^d} \leq c \sqrt{r_j(u_k)}
\end{align}
\end{prop}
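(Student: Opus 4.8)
The plan is to first localize the new candidate jump~$\widehat{v}^k$ near one of the optimal positions~$\bar{t}_i$, and then to upgrade this qualitative statement to the quantitative bound~\eqref{eq:estonpoints} by playing the quadratic growth of~$|\bar{p}|_{\R^d}$ around its maximizers (Lemma~\ref{lem:quadgrowthlinearfunc}) against the uniform convergence~$p_k\to\bar{p}$ (Proposition~\ref{prop:estforstates}).

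For the localization I would argue as follows. By the preceding proposition we may assume~$\mathcal{A}_k\neq\emptyset$ for~$k$ large, so~$\cnorm{p_k}\geq\beta$ by Proposition~\ref{prop:optimalitysub}. Since~$\hat{t}_k$ attains the maximum of~$|p_k|_{\R^d}$ we have~$|p_k(\hat{t}_k)|_{\R^d}=\cnorm{p_k}\geq\beta$, which by Corollary~\ref{coroll:isolvalit} (valid for~$k$ large) is incompatible with~$\hat{t}_k\in\bar{I}\setminus\bigcup_{i=1}^N(\bar{t}_i-R,\bar{t}_i+R)$. Hence there is exactly one~$\widehat{\imath}\in\{1,\dots,N\}$ with~$\hat{t}_k\in(\bar{t}_{\widehat{\imath}}-R,\bar{t}_{\widehat{\imath}}+R)$, which in the bookkeeping of Proposition~\ref{prop:localofoit} is precisely the assertion~$\hat{t}_k\in A^{\widehat{\imath}}_k$. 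For~$k$ large~$\hat{t}_k$ is then an \emph{interior} point of this interval, which will be used below.

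For the quantitative estimate I would write~$\mathbf{v}_k\coloneqq p_k(\hat{t}_k)/\cnorm{p_k}$, so that~$|\mathbf{v}_k|_{\R^d}=1$, $\widehat{v}^k=\mathbf{v}_k\delta_{\hat{t}_k}$ and~$\mnorm{\widehat{v}^k}=1$. Applying Lemma~\ref{lem:quadgrowthlinearfunc} with~$i=\widehat{\imath}$, $t=\hat{t}_k$ and~$\mathbf{v}=\mathbf{v}_k$ yields
\begin{align*}
\gamma_1\left(|\hat{t}_k-\bar{t}_{\widehat{\imath}}|^2+|\mathbf{v}_k-\bar{\mathbf{v}}_{\widehat{\imath}}|_{\R^d}^2\right)\leq\beta-\langle\bar{p},\widehat{v}^k\rangle.
\end{align*}
On the right-hand side I would split~$\langle\bar{p},\widehat{v}^k\rangle=\langle p_k,\widehat{v}^k\rangle+\langle\bar{p}-p_k,\widehat{v}^k\rangle$; the first term equals~$(p_k(\hat{t}_k),\mathbf{v}_k)_{\R^d}=\cnorm{p_k}$ by the choice of~$\hat{t}_k$ and~$\mathbf{v}_k$, while the second is bounded in modulus by~$\mnorm{\widehat{v}^k}\cnorm{\bar{p}-p_k}=\cnorm{\bar{p}-p_k}$. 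Using~$\cnorm{p_k}\geq\beta$ this gives~$\beta-\langle\bar{p},\widehat{v}^k\rangle\leq\cnorm{p_k-\bar{p}}$, hence~$|\hat{t}_k-\bar{t}_{\widehat{\imath}}|^2+|\mathbf{v}_k-\bar{\mathbf{v}}_{\widehat{\imath}}|_{\R^d}^2\leq\gamma_1^{-1}\cnorm{p_k-\bar{p}}$, and Proposition~\ref{prop:estforstates} turns the right-hand side into a multiple of~$r_j(u_k)^{1/2}$.

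The main obstacle is the exponent. The route just sketched controls only the \emph{value} of~$|p_k|_{\R^d}$ at~$\hat{t}_k$ and therefore, after taking square roots, delivers a bound of order~$\cnorm{p_k-\bar{p}}^{1/2}\sim r_j(u_k)^{1/4}$, whereas~\eqref{eq:estonpoints} requires~$r_j(u_k)^{1/2}$. To recover the missing factor I would instead linearize the first-order optimality condition: for~$k$ large~$\hat{t}_k$ is an interior maximizer of~$t\mapsto|p_k(t)|_{\R^d}$ on a small ball around~$\bar{t}_{\widehat{\imath}}$ on which~$|p_k|_{\R^d}$ is bounded away from zero (hence~$\mathcal{C}^1$), so that~$\tfrac{d}{dt}|p_k(t)|_{\R^d}=0$ at~$t=\hat{t}_k$; comparing this with the corresponding stationarity of~$|\bar{p}|_{\R^d}$ at~$\bar{t}_{\widehat{\imath}}$ and using the~$\mathcal{C}^1$-smallness~$\|p_k'-\bar{p}'\|_{L^\infty}=\|K^*(\nabla F(Ku_k)-\nabla F(\bar{y}))\|_{L^\infty}\leq cL\|K^*\|_{Y,L^\infty}\ynorm{y_k-\bar{y}}\leq c\,r_j(u_k)^{1/2}$ — which is exactly where Assumption~\ref{ass:assumponF}~\textbf{A1} and~$K^*\in\mathcal{L}(Y;L^\infty(I;\R^d))$ from Assumption~\ref{ass:nondegeneratesource2} enter — one obtains~$|\hat{t}_k-\bar{t}_{\widehat{\imath}}|\leq c\,r_j(u_k)^{1/2}$. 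The direction estimate then follows from~$|\mathbf{v}_k-\bar{\mathbf{v}}_{\widehat{\imath}}|_{\R^d}\leq|\mathbf{v}_k-\bar{p}(\hat{t}_k)/\beta|_{\R^d}+\beta^{-1}|\bar{p}(\hat{t}_k)-\bar{p}(\bar{t}_{\widehat{\imath}})|_{\R^d}$, the first summand being~$O(\cnorm{p_k-\bar{p}}+|\cnorm{p_k}-\beta|)$ and the second~$O(\|\bar{p}'\|_{L^\infty}|\hat{t}_k-\bar{t}_{\widehat{\imath}}|)$, both of order~$r_j(u_k)^{1/2}$.
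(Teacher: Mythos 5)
Your localization step and your use of Lemma~\ref{lem:quadgrowthlinearfunc} coincide with the paper's opening moves, and you correctly diagnose that the crude bound~$\beta-\langle\bar p,\widehat v^k\rangle\leq\cnorm{p_k-\bar p}$ only yields~$r_j(u_k)^{1/4}$. However, the device you propose to recover the exponent~$1/2$ — comparing the stationarity condition~$\tfrac{d}{dt}|p_k(t)|_{\R^d}=0$ at~$\hat t_k$ with that of~$|\bar p|_{\R^d}$ at~$\bar t_{\widehat\imath}$ — is not available under the paper's assumptions, for two reasons. First,~$p_k'=K^*\nabla F(Ku_k)$ is only an~$L^\infty$ function (Assumption~\ref{ass:nondegeneratesource2} gives~$K^*\in\mathcal{L}(Y;L^\infty(I;\R^d))$, not continuity of~$K^*y$), so~$p_k$ is Lipschitz but need not be~$\mathcal{C}^1$, and the interior maximizer~$\hat t_k$ need not be a classical critical point. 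Second, even granting smoothness, turning "$|\bar P'(\hat t_k)|\leq\|\bar P'-P_k'\|_\infty$" into "$|\hat t_k-\bar t_{\widehat\imath}|\leq c\,\|\bar P'-P_k'\|_\infty$" requires a lower bound of the type~$|\bar P'(t)|\gtrsim|t-\bar t_{\widehat\imath}|$ near~$\bar t_{\widehat\imath}$; the quadratic growth condition~$\theta_0|t-\bar t_i|^2\leq\beta-|\bar p(t)|_{\R^d}$ of Assumption~\ref{ass:nondegeneratesource2} controls only the value, not the derivative, and does not imply such a bound (that would need, e.g.,~$\bar p\in\mathcal{C}^2$ with nondegenerate second derivative, which Remark~\ref{rem:quadgrowth} mentions only as a sufficient condition, not as an assumption). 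So the step that produces the rate~$\sqrt{r_j(u_k)}$ is a genuine gap.

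The paper closes this gap without any extra regularity by a duality argument that keeps the quadratic left-hand side of Lemma~\ref{lem:quadgrowthlinearfunc} but makes the right-hand side \emph{linear} in the unknown distance. Writing~$\bar v_{\widehat\imath}=\bar{\mathbf{v}}_{\widehat\imath}\delta_{\bar t_{\widehat\imath}}$ one has~$\beta-\langle\bar p,\widehat v^k\rangle=\langle\bar p,\bar v_{\widehat\imath}-\widehat v^k\rangle$, and since~$\langle p_k,\widehat v^k\rangle=\cnorm{p_k}\geq\langle p_k,\bar v_{\widehat\imath}\rangle$ (extremality of~$\widehat v^k$ for~$p_k$) one may subtract~$\langle p_k,\bar v_{\widehat\imath}-\widehat v^k\rangle\leq 0$ to get
\begin{align*}
\beta-\langle\bar p,\widehat v^k\rangle\;\leq\;\langle\bar p-p_k,\bar v_{\widehat\imath}-\widehat v^k\rangle
=\bigl(\nabla F(\bar y)-\nabla F(y_k),\,\mathcal{K}(\bar v_{\widehat\imath}-\widehat v^k,0)\bigr)_Y.
\end{align*}
By Proposition~\ref{prop:estforstates} and \textbf{A1} the first factor is~$\leq c\sqrt{r_j(u_k)}$, and by Lemma~\ref{lem:lipofcalK} the second is~$\leq c\bigl(|\hat t_k-\bar t_{\widehat\imath}|+|p_k(\hat t_k)/\cnorm{p_k}-\bar{\mathbf{v}}_{\widehat\imath}|_{\R^d}\bigr)$. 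Combining with the quadratic lower bound from Lemma~\ref{lem:quadgrowthlinearfunc} and dividing by the distance gives~\eqref{eq:estonpoints} directly, for both the position and the direction at once. This is the missing idea in your proposal: the extra factor of~$\sqrt{r_j(u_k)}$ comes from testing~$\bar p-p_k$ against the \emph{difference of jumps}~$\bar v_{\widehat\imath}-\widehat v^k$ in the~$Y$-inner product, not from any pointwise or derivative comparison of~$|p_k|$ and~$|\bar p|$.
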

\begin{proof}
According to Proposition~\ref{prop:estforstates} there holds~$\cnorm{p_k} \rightarrow \beta$. Thus we conclude~$\hat{t}_k \in (\bar{t}_{\widehat{\imath}}-R,\bar{t}_{\widehat{\imath}}+R) $ for some~$\widehat{\imath}\in \{1,\dots,N\}$ from Corollary~\ref{coroll:isolvalit}. Applying Lemma~\ref{lem:quadgrowthlinearfunc} we get
\begin{align*}
\frac{\gamma_1}{2}\left(|\hat{t}_k-\bar{t}_{\widehat{\imath}}|+|p_k(\hat{t}_k)/\cnorm{p_k}-\bar{\mathbf{v}}_{\widehat{\imath}}|_{\R^d} \right)^2 &\leq \gamma_1 \left(|\hat{t}_k-\bar{t}_{\widehat{\imath}}|^2+|p_k(\hat{t}_k)/\cnorm{p_k}-\bar{\mathbf{v}}_{\widehat{\imath}}|^2_{\R^d} \right) \\& \leq \beta- \langle \bar{p}, \widehat{v}^k \rangle.
\end{align*}
Next note that
\begin{align*}
\beta- \langle \bar{p}, \widehat{v}^k \rangle= \langle \bar{p}, \bar{u}'_{\widehat{\imath}}-\widehat{v}^k \rangle \leq \langle \bar{p}-p_k, \bar{u}'_{\widehat{\imath}}-\widehat{v}^k \rangle=(\nabla F(\bar{y})-\nabla F(y_k), \mathcal{K}(\bar{u}'_{\widehat{\imath}}-\widehat{v}^k,0))_Y
\end{align*}
since
\begin{align*}
\cnorm{p_k}= \langle p_k, \widehat{v}^k \rangle \geq \langle p_k, \bar{u}'_{\widehat{\imath}}\rangle.
\end{align*}
Utilizing Proposition~\ref{prop:estforstates} and Lemma~\ref{lem:lipofcalK} we finally arrive at
\begin{align*}
\frac{\gamma_1}{2}\left(|\hat{t}_k-\bar{t}_{\widehat{\imath}}|+|p_k(\hat{t}_k)/\cnorm{p_k}-\bar{\mathbf{v}}_{\widehat{\imath}}|_{\R^d} \right)^2  &\leq \ynorm{\nabla F(\bar{y})-\nabla F(y_k)} \ynorm{\mathcal{K}(\bar{u}_i-\widehat{v}^k,0)} \\
& \leq c \sqrt{r_j(u_k)} \left(|\hat{t}_k-\bar{t}_{\widehat{\imath}}|+|p_k(\hat{t}_k)/\cnorm{p_k}-\bar{\mathbf{v}}_{\widehat{\imath}}|_{\R^d} \right).
\end{align*}
\end{proof}
Now fix~$k\in \N$ large enough and let~$\widehat{\imath}\in \{1,\dots,N\}$ be the index from Proposition~\ref{prop:localmaximizers}. Further recall the index sets~$A^i_k$,~$i=1,\dots,N$, from Proposition~\ref{prop:localofoit}. For every~$s\in [0,1]$ define the locally lumped measure
\begin{align*}
\widehat{u}'_{k,s}&= (1-s)\sum_{j \in A^{\widehat{\imath}}_k} \mu^k_j v^k_j+ s\left( \sum_{j \in A^{\widehat{\imath}}_k} \mu^k_j \right)\frac{p_k(\hat{t}_k)}{\cnorm{p_k}} \delta_{\hat{t}_k}+ \sum^N_{\substack{i=1, \\i \neq \widehat{\imath}}} \sum_{j \in A^i_k} \mu^k_j v^k_j =
\widehat{\mu}^k_{s,N_k +1}\widehat{v}^k+\sum^{N_k }_{j=1} \widehat{\mu}^k_{s,j} v^k_j
\end{align*}
where~$\widehat{\mu}^k_s \in \R^{N_k +1}$ is defined as
\begin{align} \label{def:optcoeffs}
\widehat{\mu}^k_{s,j}= \mu^k_j,~\forall j \in A^i_k,~i \neq \widehat{\imath},~\widehat{\mu}^k_{s,j}=(1-s) \mu^k_j,~\forall j \in A^{\widehat{\imath}}_k,~\widehat{\mu}^k_{s,N_k +1}=s\left( \sum_{j \in A^{\widehat{\imath}}_k} \mu^k_j \right).
\end{align}
Set~$\widehat{u}_{k,s}=B(\widehat{u}'_{k,s},c_k)$. By construction, there holds
\begin{align*}
\widehat{r}_j(u_{k+1})-\widehat{r}_j(u_{k}) \leq F(K\widehat{u}_{k,s})-F(K\widehat{u}_{k})+ \beta\left(\sum^{N_k +1}_{j=1} \widehat{\mu}^k_{s,j}- \sum^{N_k}_{j=1} \mu^k_j \right).
\end{align*}
The following properties of~$\widehat{u}_{k,s}$ follow directly.
\begin{lemma} \label{lem:propsofhatu}
Let~$u_k$ and~$p_k$ be generated by Algorithm~\ref{alg:accgcg}. Moreover let~$\widehat{u}_{k,s}$ be defined as above. Then there holds
\begin{align*}
\langle p_k, u'_k- \widehat{u}'_{k,s}  \rangle= -s \left( \sum_{j \in A^{\widehat{\imath}}_k} \mu^k_j \right) (\cnorm{p_k}-\beta),~  \sum^{N_k +1}_{j=1} \widehat{\mu}^k_{s,j}= \sum^{N_k}_{j=1} \mu^k_j.
\end{align*}
\end{lemma}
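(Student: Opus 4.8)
The plan is to verify both identities by direct computation from the definition of the locally lumped measure $\widehat{u}'_{k,s}$ and the coefficient vector $\widehat{\mu}^k_s$ in~\eqref{def:optcoeffs}, the only external ingredients being that $\{A^i_k\}^N_{i=1}$ forms a partition of $\{1,\dots,N_k\}$ (Proposition~\ref{prop:localofoit}) and the first order optimality relation $\langle p_k,v^k_j\rangle=\beta$ for every active jump (Proposition~\ref{prop:optimalitysub}).

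I would begin with the second identity, which is purely combinatorial. Splitting $\sum^{N_k+1}_{j=1}\widehat{\mu}^k_{s,j}$ according to the three cases in~\eqref{def:optcoeffs}, the indices in $A^i_k$ with $i\neq\widehat{\imath}$ contribute $\sum_{i\neq\widehat{\imath}}\sum_{j\in A^i_k}\mu^k_j$, the indices in $A^{\widehat{\imath}}_k$ contribute $(1-s)\sum_{j\in A^{\widehat{\imath}}_k}\mu^k_j$, and the new index $N_k+1$ contributes $s\sum_{j\in A^{\widehat{\imath}}_k}\mu^k_j$. The two terms carrying the factor $s$ cancel, and since the $A^i_k$ partition $\{1,\dots,N_k\}$ the remainder collapses to $\sum^N_{i=1}\sum_{j\in A^i_k}\mu^k_j=\sum^{N_k}_{j=1}\mu^k_j$.

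For the first identity I would compute the difference explicitly. Writing $u'_k=\sum^N_{i=1}\sum_{j\in A^i_k}\mu^k_j v^k_j$ and subtracting $\widehat{u}'_{k,s}$, all blocks indexed by $i\neq\widehat{\imath}$ cancel exactly, so that
\[
u'_k-\widehat{u}'_{k,s}=s\left(\sum_{j\in A^{\widehat{\imath}}_k}\mu^k_j v^k_j-\Bigl(\sum_{j\in A^{\widehat{\imath}}_k}\mu^k_j\Bigr)\widehat{v}^k\right).
\]
Applying $\langle p_k,\cdot\rangle$ and using $\langle p_k,v^k_j\rangle=\beta$ for every $j\in A^{\widehat{\imath}}_k$ together with $\langle p_k,\widehat{v}^k\rangle=(p_k(\hat{t}_k),p_k(\hat{t}_k)/\cnorm{p_k})_{\R^d}=|p_k(\hat{t}_k)|^2_{\R^d}/\cnorm{p_k}=\cnorm{p_k}$, the bracket equals $\bigl(\sum_{j\in A^{\widehat{\imath}}_k}\mu^k_j\bigr)(\beta-\cnorm{p_k})$, which after multiplying by $s$ gives the claimed formula.

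I do not expect a genuine obstacle here: the statement is a bookkeeping identity whose only subtlety is keeping careful track of which blocks of the active set are modified in passing from $u_k$ to $\widehat{u}_{k,s}$ and invoking the correct relation $\langle p_k,v^k_j\rangle=\beta$ rather than the mere inequality $\langle p_k,v^k_j\rangle\le\cnorm{p_k}$. Isolating it as a separate lemma is worthwhile because both identities feed directly into the refined descent estimate used to establish the linear convergence of $\widehat{r}_j(u_k)$.
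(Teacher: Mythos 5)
Your proposal is correct and follows essentially the same route as the paper: the difference $u'_k-\widehat{u}'_{k,s}$ collapses to the block indexed by $A^{\widehat{\imath}}_k$, and the identities then follow from $\langle p_k,v^k_j\rangle=\beta$ (Proposition~\ref{prop:optimalitysub}) and $\langle p_k,\widehat{v}^k\rangle=\cnorm{p_k}$, with the coefficient sum identity a direct cancellation as in~\eqref{def:optcoeffs}. No gaps.
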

\begin{proof}
Note that
\begin{align*}
u'_k- \widehat{u}'_{k,s}= s\sum_{j \in A^{\widehat{\imath}}_k} \mu^k_j v^k_j- s\left( \sum_{j \in A^{\widehat{\imath}}_k} \mu^k_j \right)\widehat{v}^k
\end{align*}
and thus
\begin{align*}
\langle p_k, u'_k- \widehat{u}'_{k,s}  \rangle= s \left( \sum_{j \in A^{\widehat{\imath}}_k} \mu^k_j \langle p_k, v^k_j \rangle- \left( \sum_{j \in A^{\widehat{\imath}}_k} \mu^k_j \right) \langle p_k, \widehat{v}^k \rangle \right)=-s \left( \sum_{j \in A^{\widehat{\imath}}_k} \mu^k_j \right) (\cnorm{p_k}-\beta)
\end{align*}
using that~$\langle p_k, v^k_j \rangle=\beta$, see, and~$\langle p_k,\widehat{v}^k \rangle=\cnorm{p_k}$. The statement on~$\sum^{N_k +1}_{j=1} \widehat{\mu}^k_{s,j}$ is imminent.
\end{proof}
As a final step we now use~$\widehat{u}_{k,s}$ to prove a refined descent estimate for Algorithm~\ref{alg:accgcg}.
\begin{lemma} \label{lem:fundamentallemma}
For all~$k\in\N$ large enough there holds
\begin{align} \label{eq:fundemantallemma}
\widehat{r}_j(u_{k+1})-\widehat{r}_j(u_{k}) \leq \min_{s \in[0,1]} \left\lbrack \left( s^2c_1-s \left( \min_{i=1,\dots,N} \bar{\mu}_i/2M_0 \right) \right) \widehat{r}_j(u_{k}) \right\rbrack
\end{align}
for some~$c_1>0$ independent of~$k$ and~$s$.
\end{lemma}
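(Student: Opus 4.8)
The plan is to mimic the proof of the basic descent lemma, Lemma~\ref{lem:descent}, but with the \emph{locally lumped} auxiliary iterate~$\widehat{u}_{k,s}$ in place of the global one. I would start from the already-recorded inequality
\begin{align*}
\widehat{r}_j(u_{k+1})-\widehat{r}_j(u_{k}) \leq F(K\widehat{u}_{k,s})-F(K u_{k})+ \beta\left(\sum^{N_k +1}_{j=1} \widehat{\mu}^k_{s,j}- \sum^{N_k}_{j=1} \mu^k_j \right),
\end{align*}
and observe that by Lemma~\ref{lem:propsofhatu} the~$\beta$-term vanishes, since~$\sum^{N_k+1}_{j=1}\widehat{\mu}^k_{s,j}=\sum^{N_k}_{j=1}\mu^k_j$. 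So the whole estimate reduces to controlling the smooth part~$F(K\widehat{u}_{k,s})-F(Ku_k)$.

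For that I would Taylor-expand~$F$ around~$Ku_k$ and use the Lipschitz continuity of~$\nabla F$ (Assumption~\ref{ass:assumponF}~\textbf{A1}), exactly as in Lemma~\ref{lem:descent}, to get
\begin{align*}
F(K\widehat{u}_{k,s})-F(Ku_k) \leq \langle p_k, \widehat{u}'_{k,s}-u'_k \rangle + \frac{L}{2}\ynorm{\mathcal{K}(\widehat{u}'_{k,s}-u'_k,0)}^2.
\end{align*}
By Lemma~\ref{lem:propsofhatu} the linear term equals~$-s\big(\sum_{j\in A^{\widehat\imath}_k}\mu^k_j\big)(\cnorm{p_k}-\beta)$, which is~$\leq 0$; the point is to show it is at least a fixed multiple of~$-s\,\widehat{r}_j(u_k)$. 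Here I would invoke: (i) Theorem~\ref{thm:slowconvalg}/\eqref{upperforres}, which gives~$\widehat{r}_j(u_k)\leq M_k(\cnorm{p_k}-\beta)\leq M_0(\cnorm{p_k}-\beta)$, so~$\cnorm{p_k}-\beta \geq \widehat{r}_j(u_k)/M_0$; and (ii) Lemma~\ref{lem:convlocalized}, which gives~$\sum_{j\in A^{\widehat\imath}_k}\mu^k_j \to \bar\mu_{\widehat\imath} \geq \min_i \bar\mu_i > 0$, hence~$\sum_{j\in A^{\widehat\imath}_k}\mu^k_j \geq (\min_i\bar\mu_i)/2$ for~$k$ large. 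Multiplying these two bounds yields
\begin{align*}
\langle p_k, \widehat{u}'_{k,s}-u'_k\rangle \leq -s\,\frac{\min_{i}\bar\mu_i}{2M_0}\,\widehat{r}_j(u_k),
\end{align*}
which is precisely the linear term appearing in~\eqref{eq:fundemantallemma}.

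It remains to bound the quadratic term~$\tfrac{L}{2}\ynorm{\mathcal{K}(\widehat{u}'_{k,s}-u'_k,0)}^2$ by~$s^2 c_1\,\widehat{r}_j(u_k)$. From the definition of~$\widehat{u}'_{k,s}$ one has
\begin{align*}
\widehat{u}'_{k,s}-u'_k = -s\sum_{j\in A^{\widehat\imath}_k}\mu^k_j\big(v^k_j-\widehat{v}^k\big)
 = -s\sum_{j\in A^{\widehat\imath}_k}\mu^k_j\big(\mathbf{v}^k_j\delta_{t^k_j}-(p_k(\hat t_k)/\cnorm{p_k})\delta_{\hat t_k}\big).
\end{align*}
Pulling out~$s$, using the triangle inequality in~$Y$ and the Lipschitz estimate for~$\mathcal{K}$ on pairs of normalized Dirac deltas (Lemma~\ref{lem:lipofcalK}), the norm is bounded by a constant times
\begin{align*}
s\sum_{j\in A^{\widehat\imath}_k}\mu^k_j\Big(|t^k_j-\hat t_k| + |\mathbf{v}^k_j - p_k(\hat t_k)/\cnorm{p_k}|_{\R^d}\Big).
\end{align*}
Now I would insert~$\pm\bar t_{\widehat\imath}$ and~$\pm\bar{\mathbf{v}}_{\widehat\imath}$ and split each difference, estimating the~$v^k_j$-to-optimal distances by Lemma~\ref{lem:quantit} and the~$\widehat{v}^k$-to-optimal distances by Proposition~\ref{prop:localmaximizers} (note~$\sum_{j\in A^{\widehat\imath}_k}\mu^k_j$ is bounded by~$\mnorm{\bar u'}$, so the constant from Proposition~\ref{prop:localmaximizers} can be pulled through). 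Both lemmas deliver an upper bound of the form~$c\sqrt{\widehat{r}_j(u_k)}$ (recall~$r_j\leq\widehat r_j$). Squaring gives~$\ynorm{\mathcal{K}(\widehat{u}'_{k,s}-u'_k,0)}^2 \leq s^2\, c'\,\widehat{r}_j(u_k)$, and with~$c_1 := Lc'/2$ we obtain the quadratic term in~\eqref{eq:fundemantallemma}. Finally, summing the linear and quadratic contributions and minimizing over~$s\in[0,1]$ yields the claim.

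The main obstacle is the quadratic term: one must be careful that \emph{all} the jumps~$v^k_j$ with~$j\in A^{\widehat\imath}_k$ cluster near the \emph{same} optimal point~$\bar t_{\widehat\imath}$ as~$\widehat{v}^k$ — this is exactly why Proposition~\ref{prop:localmaximizers} is stated with~$\hat t_k\in A^{\widehat\imath}_k$ — so that the triangle-inequality splitting through~$\bar t_{\widehat\imath}$, $\bar{\mathbf{v}}_{\widehat\imath}$ stays within one cluster and Lemma~\ref{lem:quantit} can be applied cluster-wise without losing the~$\mu^k_j$-weights. The rest is bookkeeping of constants, all of which are uniform in~$k$ and~$s$ for~$k$ large by the convergence statements in Theorem~\ref{thm:slowconvalg}, Proposition~\ref{prop:estforstates}, and Lemma~\ref{lem:convlocalized}.
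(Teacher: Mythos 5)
Your proposal is correct and follows essentially the same route as the paper: kill the $\beta$-term via Lemma~\ref{lem:propsofhatu}, bound the linear term by $-s(\min_i\bar\mu_i/2M_0)\,\widehat r_j(u_k)$ using \eqref{upperforres} and Lemma~\ref{lem:convlocalized}, and bound the quadratic term by $s^2c\,\widehat r_j(u_k)$ via Lemma~\ref{lem:lipofcalK}, Lemma~\ref{lem:quantit} and Proposition~\ref{prop:localmaximizers} after splitting through $\bar t_{\widehat\imath}$, $\bar{\mathbf v}_{\widehat\imath}$. Only note the paper's sign convention: the Taylor linear term is $\langle p_k,u'_k-\widehat u'_{k,s}\rangle$ (not $\langle p_k,\widehat u'_{k,s}-u'_k\rangle$), which is exactly the quantity Lemma~\ref{lem:propsofhatu} evaluates to $-s\bigl(\sum_{j\in A^{\widehat\imath}_k}\mu^k_j\bigr)(\cnorm{p_k}-\beta)$.
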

\begin{proof}
Let~$s\in [0,1]$ be arbitrary but fixed. We estimate
\begin{align*}
\widehat{r}_j(u_{k+1})-\widehat{r}_j(u_{k}) \leq F(K\widehat{u}_{k,s})-F(K\widehat{u}_{k})+ \beta\left(\sum^{N_k +1}_{j=1} \widehat{\mu}^k_{s,j}- \sum^{N_k}_{j=1} \mu^k_j \right)= F(K\widehat{u}_{k,s})-F(K\widehat{u}_{k})
\end{align*}
where the last equality holds due to Lemma~\ref{lem:propsofhatu}.
As in the proof of Lemma~\ref{lem:descent} we now find
\begin{align}
F(K\widehat{u}_{k,s})-F(K\widehat{u}_{k}) \leq \langle p_k,u'_k- \widehat{u}'_{k,s} \rangle+ \frac{L}{2} \ynorm{K(u_k-\widehat{u}_{k,s})}^2
\end{align}
where~$L>0$ denotes the Lipschitz constant of~$\nabla F$. Summarizing the previous observations and again utilizing Lemma~\ref{lem:propsofhatu} we thus get
\begin{align*}
\widehat{r}_j(u_{k+1})-\widehat{r}_j(u_{k}) \leq -s \left( \sum_{j \in A^{\widehat{\imath}}_k} \mu^k_j \right) (\cnorm{p_k}-\beta)+ \frac{L}{2} \ynorm{K(u_k-\widehat{u}_{k,s})}^2.
\end{align*}
Next we use~\eqref{upperforres} as well as~$\sum_{j \in A^i_k} \mu^k_j \rightarrow \bar{\mu}_i$,~$i=1,\dots,N$, see Lemma~\ref{lem:convlocalized}, to establish the upper bound
\begin{align*}
-s \left( \sum_{j \in A^{\widehat{\imath}}_k} \mu^k_j \right) (\cnorm{p_k}-\beta) \leq -s \left(\min_{i=1,\dots,N} \bar{\mu}_i/2M_0 \right) \widehat{r}_j(u_k).
\end{align*}
Finally it remains to estimate the difference of the observations associated to~$\widehat{u}_{k,s}$ and~$u_k$, respectively. For this purpose we note
\begin{align*}
\ynorm{K(u_k-\widehat{u}_{k,s})} &\leq s \sum_{j \in A^{\widehat{\imath}}_k} \mu^k_j \ynorm{\mathcal{K}(v^k_j-\widehat{v}^k,0)} \\ & \leq s \sum_{j \in A^{\widehat{\imath}}_k} \mu^k_j \left( \ynorm{\mathcal{K}(v^k_j-\bar{u}_{\widehat{\imath}},0)} +\ynorm{\mathcal{K}(\bar{u}_{\widehat{\imath}}-\widehat{v}^k,0)} \right) \\ & \leq
s \sum_{j \in A^{\widehat{\imath}}_k} \mu^k_j \left( |t^k_j-\bar{t}_i|+|\mathbf{v}^k_j-\bar{\mathbf{v}}_i|_{\R^d} +|\hat{t}_k-\bar{t}_{\widehat{\imath}}|+|p_k(\hat{t}_k)/\cnorm{p_k}-\bar{\mathbf{v}}_{\widehat{\imath}}|_{\R^d} \right) \\ & \leq s c\left( 1+ \left( \sum_{j \in A^{\widehat{\imath}}_k} \mu^k_j \right) \right) \sqrt{\widehat{r}_j(u_k)}
\end{align*}
where Lemma~\ref{lem:lipofcalK} is used in the third inequality and Lemma~\ref{lem:quantit} as well as Lemma~\ref{prop:localmaximizers} in the final one. Again pointing out that~$\sum_{j \in A^{\widehat{\imath}}_k} \mu^k_j$ is uniformly bounded independently of~$\widehat{\imath}$ and~$k \in \N$, see Lemma~\ref{lem:convlocalized}, we finally arrive at
\begin{align*}
\ynorm{K(u_k-\widehat{u}_{k,s})}^2 \leq s^2\, c\, \widehat{r}_j(u_k)
\end{align*}
and thus
\begin{align*}
\widehat{r}_j(u_{k+1})-\widehat{r}_j(u_{k}) \leq  \left( s^2c-s \left( \min_{i=1,\dots,N} \bar{\mu}_i/2M_0 \right) \right) \widehat{r}_j(u_{k}).
\end{align*}
Minimizing both sides w.r.t~$s\in[0,1]$ yields the desired result.
\end{proof}
Using this improved descent estimate we prove the linear convergence of the auxiliary residual~$\widehat{r}_j(u_k)$.
\begin{theorem} \label{thm:linconvergence}
Let Assumptions~\ref{ass:problem}-\ref{ass:nondegeneratesource2} hold. Then there is~$\zeta \in (0,1)$ such that
\begin{align*}
r_j (u_k) \leq \widehat{r}_j (u_k) \leq c \zeta^{k}
\end{align*}
for all~$k\in\N$ large enough.
\end{theorem}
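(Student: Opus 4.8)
The plan is to simply iterate the one-step contraction furnished by Lemma~\ref{lem:fundamentallemma}. Write $q:=\min_{i=1,\dots,N}\bar\mu_i/(2M_0)>0$ and let $c_1>0$ and $k_0\in\N$ be such that
\[
\widehat{r}_j(u_{k+1})-\widehat{r}_j(u_k)\le \min_{s\in[0,1]}\bigl[(s^2 c_1 - s q)\,\widehat{r}_j(u_k)\bigr]\qquad\text{for all }k\ge k_0 ,
\]
as provided by that lemma. Since $q$, $c_1$ and $M_0$ do not depend on $k$, the first step is to bound the scalar minimum on the right from above by a negative multiple of $\widehat{r}_j(u_k)$ that is uniform in $k$. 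Using that $\widehat{r}_j(u_k)\ge r_j(u_k)\ge 0$ one may pull this factor out of the minimum, and inserting the (in general suboptimal) value $\bar s:=\min\{1,q/(2c_1)\}\in(0,1]$ and distinguishing the two cases $\bar s=q/(2c_1)$ and $\bar s=1$ gives $\bar s^2 c_1-\bar s q\le-\delta$ with $\delta:=\min\{q^2/(4c_1),c_1,1/2\}\in(0,1/2]$, again independent of $k$. Consequently
\[
\widehat{r}_j(u_{k+1})\le \zeta\,\widehat{r}_j(u_k)\qquad\text{for all }k\ge k_0,\qquad \zeta:=1-\delta\in(0,1).
\]

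The remaining step is an elementary induction. From the last inequality, $\widehat{r}_j(u_k)\le \zeta^{\,k-k_0}\widehat{r}_j(u_{k_0})$ for every $k\ge k_0$; since the algorithm is assumed not to terminate after finitely many steps we have $0<\widehat{r}_j(u_{k_0})<\infty$, so that with $c:=\zeta^{-k_0}\widehat{r}_j(u_{k_0})$ this reads $\widehat{r}_j(u_k)\le c\,\zeta^k$ for all $k\ge k_0$. Finally, $r_j(u_k)\le\widehat{r}_j(u_k)$ — as already observed from $\mnorm{u'_k}=\lVert\sum_{i=1}^{N_k}\mu^k_i v^k_i\rVert_{\mathcal{M}}\le\sum_{i=1}^{N_k}\mu^k_i$ together with $\mnorm{v^k_i}=1$ — yields the asserted bound for all $k\in\N$ large enough.

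I do not expect a genuine obstacle in this concluding argument: the entire analytic burden has already been discharged in Lemma~\ref{lem:fundamentallemma} and the estimates feeding into it (Lemmas~\ref{lem:quadgrowthlinearfunc}, \ref{lem:lipofcalK}, \ref{lem:quantit} and Proposition~\ref{prop:localmaximizers}). The only two points deserving attention are, first, that the contraction factor $\zeta$ must be bounded strictly away from $1$ \emph{uniformly in} $k$, which holds precisely because $q$ and $c_1$ are $k$-independent; and second, the harmless bookkeeping that Lemma~\ref{lem:fundamentallemma} only becomes available from the index $k_0$ onward, so that the linear rate is asserted asymptotically and the prefactor $c$ absorbs the behaviour of the residual up to $k_0$.
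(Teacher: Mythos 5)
Your argument is correct and follows essentially the same route as the paper: both invoke Lemma~\ref{lem:fundamentallemma}, bound the minimum of the quadratic in $s$ by the explicit choice $\bar s=\min\{1,q/(2c_1)\}$ (the paper writes the resulting factor as $\zeta=1-\tfrac{c_2}{2}\min\{1,\tfrac{c_2}{2c_1}\}$ with $c_2=q$), and then iterate the one-step contraction from the index where the lemma becomes valid, absorbing the early iterations into the constant $c$. Your slightly more conservative $\delta=\min\{q^2/(4c_1),c_1,1/2\}$ only changes the value of $\zeta$, not the argument, and it conveniently guarantees $\zeta\in(0,1)$ explicitly.
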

\begin{proof}
According to Lemma~\ref{lem:fundamentallemma} there is~$K\in\N$ such that
\begin{align*}
\widehat{r}_j(u_{k+1}) \leq \min_{s \in[0,1]} \left\lbrack \left(1+ s^2c_1-s c_2 \right) \widehat{r}_j(u_{k}) \right\rbrack \quad \forall k\geq K
\end{align*}
where we set
\begin{align*}
c_2 \coloneqq \left( \min_{i=1,\dots,N} \bar{\mu}_i/2M_0 \right)
\end{align*}
for abbreviation. Explicitly calculating the minimum reveals
\begin{align*}
\min_{s \in[0,1]}  \left(1+ s^2c_1-s c_2 \right) \leq  \zeta \coloneqq 1- \frac{c_2}{2} \min \left\{1, \frac{c_2}{2c_1}\right\}
\end{align*}
and thus
\begin{align*}
r_j(u_k)\leq\widehat{r}_j(u_{k+1}) \leq \zeta^{k-K} \widehat{r}_j(u_K)
\end{align*}
for all~$k\geq K$.
\end{proof}
\subsubsection*{ Linear convergence of the iterates}
In this last subsection we aim to quantify the strict convergence of~$u_k$ towards~$\optu$. More in detail we utilize Theorem~\ref{thm:linconvergence} to prove
\begin{align}\label{eq:strictest}
\|u_k-\optu\|_{L^1}+ |\mnorm{u'_k}-\mnorm{\optu'}| \leq c \zeta^k_2
\end{align}
for some~$\zeta_2 \in (0,1)$ and all~$k \in \N $ large enough.
For this purpose we rely on the following auxiliary estimates.
\begin{lemma} \label{lem:estofnorms}
For all~$k\in\N$ large enough there holds
\begin{align*}
|\mnorm{u'_k}-\mnorm{\optu'}|\leq c \sqrt{\widehat{r}_j(u_k)}+  \sum^N_{i=1} \big| \sum_{j \in A^i_k}\mu^k_j-\bar{\mu}_i \big| \Big \rbrack
\end{align*}
\end{lemma}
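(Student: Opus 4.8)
The plan is to use that, by Proposition~\ref{prop:localofoit}, for all $k\in\N$ large enough the derivative of the iterate splits as $u'_k=\sum_{i=1}^N U'_{k,i}$ with the restricted measures $U'_{k,i}$ from~\eqref{def:restricmeasures} supported in the pairwise disjoint intervals $(\bar{t}_i-R,\bar{t}_i+R)$. Since the $U'_{k,i}$ then have mutually disjoint supports, the $\mathcal{M}$-norm is additive, $\mnorm{u'_k}=\sum_{i=1}^N\mnorm{U'_{k,i}}$. On the other hand, Corollary~\ref{coroll:uniqueness} yields $\mnorm{\optu'}=\sum_{i=1}^N\bar{\mu}_i$ because $\optu'=\sum_{i=1}^N\bar{\mu}_i\bar{\mathbf{v}}_i\delta_{\bar{t}_i}$ with pairwise distinct $\bar{t}_i$, $|\bar{\mathbf{v}}_i|_{\R^d}=1$ and $\bar{\mu}_i>0$. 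By the triangle inequality it therefore suffices to estimate $\sum_{i=1}^N|\mnorm{U'_{k,i}}-\bar{\mu}_i|$, and for each fixed $i$ I would split, using $\mnorm{U'_{k,i}}\le\sum_{j\in A^i_k}\mu^k_j$,
\begin{align*}
|\mnorm{U'_{k,i}}-\bar{\mu}_i| \le \Big(\sum_{j\in A^i_k}\mu^k_j-\mnorm{U'_{k,i}}\Big) + \Big|\sum_{j\in A^i_k}\mu^k_j-\bar{\mu}_i\Big|.
\end{align*}
The second summand already appears on the right-hand side of the claim, so only the first one, which measures the misfit of the jump \emph{directions}, has to be bounded by $c\sqrt{\widehat{r}_j(u_k)}$.

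To control $\sum_{j\in A^i_k}\mu^k_j-\mnorm{U'_{k,i}}$ I would test $U'_{k,i}$ against the admissible field $\psi\,\bar{\mathbf{v}}_i\in\mathcal{C}_0(I;\R^d)$, where $\psi\in\mathcal{C}_0(I)$ is a fixed cutoff with $0\le\psi\le1$ and $\psi\equiv1$ on $(\bar{t}_i-R,\bar{t}_i+R)$; this is possible since $(\bar{t}_i-R,\bar{t}_i+R)\subset I$, and $\cnorm{\psi\,\bar{\mathbf{v}}_i}=1$. As every $t^k_j$ with $j\in A^i_k$ lies in $(\bar{t}_i-R,\bar{t}_i+R)$, evaluation gives $\mnorm{U'_{k,i}}\ge\langle\psi\,\bar{\mathbf{v}}_i,U'_{k,i}\rangle=\sum_{j\in A^i_k}\mu^k_j(\bar{\mathbf{v}}_i,\mathbf{v}^k_j)_{\R^d}$, hence, using $|\mathbf{v}^k_j|_{\R^d}=|\bar{\mathbf{v}}_i|_{\R^d}=1$,
\begin{align*}
\sum_{j\in A^i_k}\mu^k_j-\mnorm{U'_{k,i}} \le \sum_{j\in A^i_k}\mu^k_j\big(1-(\bar{\mathbf{v}}_i,\mathbf{v}^k_j)_{\R^d}\big) = \tfrac12\sum_{j\in A^i_k}\mu^k_j\,|\mathbf{v}^k_j-\bar{\mathbf{v}}_i|^2_{\R^d}.
\end{align*}
Since $|\mathbf{v}^k_j-\bar{\mathbf{v}}_i|_{\R^d}\le2$, the quadratic term is dominated by $|\mathbf{v}^k_j-\bar{\mathbf{v}}_i|_{\R^d}$, so summing over $i$ and invoking the quantitative estimate of Lemma~\ref{lem:quantit} gives $\sum_{i=1}^N\big(\sum_{j\in A^i_k}\mu^k_j-\mnorm{U'_{k,i}}\big)\le c\sqrt{\widehat{r}_j(u_k)}$. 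Inserting this into the splitting above and summing over $i$ yields exactly the asserted bound.

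I do not anticipate a serious obstacle: the only points that need a little care are the additivity of $\mnorm{\cdot}$ over the disjoint pieces $U'_{k,i}$ (which rests on Proposition~\ref{prop:localofoit} together with the pairwise disjointness of the intervals around the $\bar{t}_i$) and the construction of the admissible test field $\psi\,\bar{\mathbf{v}}_i$; everything else is elementary vector-in-$\R^d$ algebra. The largeness-of-$k$ hypothesis is used only to apply Proposition~\ref{prop:localofoit}, Lemma~\ref{lem:convlocalized} and Lemma~\ref{lem:quantit}.
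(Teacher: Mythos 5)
Your proposal is correct and follows essentially the same route as the paper: the same reduction to the restricted measures $U'_{k,i}$, the same splitting into a direction-misfit term plus $\big|\sum_{j\in A^i_k}\mu^k_j-\bar{\mu}_i\big|$, and the same final appeal to Lemma~\ref{lem:quantit} through the quantity $\sum_{j\in A^i_k}\mu^k_j|\mathbf{v}^k_j-\bar{\mathbf{v}}_i|_{\R^d}$. The only (harmless) deviation is in the intermediate step: you bound $\sum_{j\in A^i_k}\mu^k_j-\mnorm{U'_{k,i}}$ by testing against a cutoff field $\psi\,\bar{\mathbf{v}}_i$ and using $1-(\bar{\mathbf{v}}_i,\mathbf{v}^k_j)_{\R^d}=\tfrac12|\mathbf{v}^k_j-\bar{\mathbf{v}}_i|^2_{\R^d}$, whereas the paper groups jumps with coinciding positions and uses the reverse triangle inequality; both yield the same bound, and your cutoff construction only needs the trivial care of ensuring $\psi$ equals one at the finitely many jump positions (shrink $R$ if an interval endpoint touches $\partial I$).
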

\begin{proof}
Recall the definition of the restricted measures~$U'_{k,i}$ from~\eqref{def:restricmeasures}. Then there holds
\begin{align*}
|\mnorm{u'_k}-\mnorm{\optu'}| \leq \sum^N_{i=1} \Big \lbrack \big|\mnorm{U'_{k,i}}-\sum_{j \in A^i_k}\mu^k_j\big|+ \big| \sum_{j \in A^i_k}\mu^k_j-\bar{\mu}_i \big| \Big \rbrack
\end{align*}
Now, fix an arbitrary~$i \in \{1, \dots,N\}$. Given two indices~$j_1,j_2 \in A^i_k$ we note that
\begin{align*}
\mnorm{ \mu^k_{j_1}u^k_{j_1}+\mu^k_{j_2}u^k_{j_2}}= \mu^k_{j_1}+\mu^k_{j_2}
\end{align*}
if~$t^k_{j_1}\neq t^k_{j_2}$ and
\begin{align*}
\mnorm{ \mu^k_{j_1}u^k_{j_1}+\mu^k_{j_2}u^k_{j_2}}= \big|\mu^k_{j_1}\mathbf{u}^k_{j_1}+\mu^k_{j_2}\mathbf{u}^k_{j_2}\big|_{\R^d}
\end{align*}
if~$t^k_{j_1}= t^k_{j_2}$.
Similarly we conclude the existence of a partition of~$A^i_k$ into pairwise disjoint, nonempty sets~$I^h_k$,~$h=1,\dots,n_k$, with
\begin{align*}
\big|\mnorm{U'_{k,i}}-\sum_{j \in A^i_k}\mu^k_j\big|&= \left| \sum^{n_k}_{h=1} \Big \lbrack \big | \sum_{j \in I^h_k} \mu^k_j \mathbf{v}^k_j \big|_{\R^d}- \sum_{j \in I^h_k} \mu^k_j |\bar{\mathbf{v}}_i|_{\R^d} \Big \rbrack\right| \\ & \leq \sum^{n_k}_{h=1} \Big|\big | \sum_{j \in I^h_k} \mu^k_j \mathbf{v}^k_j \big|_{\R^d}- \sum_{j \in I^h_k} \mu^k_j |\bar{\mathbf{v}}_i|_{\R^d} \Big|
\\ & \leq
\sum^{n_k}_{h=1} \Big| \sum_{j \in I^h_k} \mu^k_j( \mathbf{v}^k_j-\bar{\mathbf{v}}_i) \Big|_{\R^d}
\\& \leq
\sum^{n_k}_{h=1} \sum_{j \in I^h_k} \mu^k_j \big | \mathbf{v}^k_j-\bar{\mathbf{v}}_i \big|_{\R^d}
\\& \leq c \sqrt{\widehat{r}_j(u_k)}
\end{align*}
where we use the inverse triangle inequality in the second inequality and
\begin{align*}
\sum^{n_k}_{h=1} \sum_{j \in I^h_k} \mu^k_j \big | \mathbf{v}^k_j-\bar{\mathbf{v}}_i \big|_{\R^d} = \sum_{j \in A^i_k} \mu^k_j \big | \mathbf{v}^k_j-\bar{\mathbf{v}}_i \big|_{\R^d}
\end{align*}
as well as Lemma~\ref{lem:quantit} in the final inequality. Summarizing all previous observations and noting that the index~$i$ was chosen arbitrarily finishes the proof.
\end{proof}
A similar estimate holds for the $L^1$ distance of the iterates to the minimizer~$\optu$.
\begin{lemma} \label{lem:estofl1}
Define constants
\begin{align} \label{defofCk}
\bar{C}=- \frac{1}{T}\int_0^T~ \int_0^s \de \bar{u}'~\de s +a_{\bar{u}},~ C^k=- \frac{1}{T}\int_0^T~ \int_0^s \de u'_k~\de s +a_{u_k}.
\end{align}
For all~$k\in\N$ large enough there holds
\begin{align*}
\|u_k-\optu\|_{L^1} \leq c \sqrt{\widehat{r}_j(u_k)}+ T |C^k-\bar{C}|+\sum^N_{i=1} \Big| \sum_{j \in A^i_k} \mu^k_i -\bar{\mu}_i \Big|
\end{align*}
\end{lemma}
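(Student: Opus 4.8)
The plan is to reduce the $L^1$-distance to an elementary estimate by writing both $u_k$ and $\optu$ explicitly as sums of vector-valued indicator functions plus a constant, regrouping the jumps of $u_k$ according to the index sets $A^i_k$ from Proposition~\ref{prop:localofoit}, and then comparing each block with the corresponding optimal jump $\bar{\mu}_i \bar{\mathbf{v}}_i \delta_{\bar{t}_i}$ by means of the quantitative closeness bound of Lemma~\ref{lem:quantit}. The argument runs parallel to the proof of Lemma~\ref{lem:estofnorms}.

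First I would unwind the definition of $B$ in~\eqref{def:Boperator}: as already recorded in the proof of Lemma~\ref{lem:lipofcalK} one has $B(\mathbf{v}\delta_t,0)= \mathbf{v}\chi_t - \frac{1}{T}\mathbf{v}(T-t)$, and $a_{B(q,c)}=c$ by Proposition~\ref{prop:equivalence}. Hence, with the constants $C^k$ and $\bar{C}$ from~\eqref{defofCk} and using $a_{u_k}=c^k$, the iterate and the minimizer admit the pointwise representations $u_k = \sum_{i=1}^{N_k}\mu^k_i \mathbf{v}^k_i \chi_{t^k_i} + C^k$ and $\optu = \sum_{i=1}^N \bar{\mu}_i\bar{\mathbf{v}}_i\chi_{\bar{t}_i} + \bar{C}$. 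Regrouping the first sum over the pairwise disjoint blocks $A^i_k$, subtracting and applying the triangle inequality gives
\begin{align*}
\|u_k-\optu\|_{L^1} \leq T\,|C^k-\bar{C}| + \sum_{i=1}^N \Big\| \sum_{j\in A^i_k} \mu^k_j \mathbf{v}^k_j \chi_{t^k_j} - \bar{\mu}_i \bar{\mathbf{v}}_i \chi_{\bar{t}_i} \Big\|_{L^1}.
\end{align*}

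Next I would estimate the $i$-th summand by inserting intermediate terms, i.e.\ splitting it as $\sum_{j\in A^i_k}\mu^k_j \mathbf{v}^k_j(\chi_{t^k_j}-\chi_{\bar{t}_i}) + \sum_{j\in A^i_k}\mu^k_j(\mathbf{v}^k_j-\bar{\mathbf{v}}_i)\chi_{\bar{t}_i} + \big(\sum_{j\in A^i_k}\mu^k_j-\bar{\mu}_i\big)\bar{\mathbf{v}}_i\chi_{\bar{t}_i}$, and bounding the three pieces via $\|\chi_{t_1}-\chi_{t_2}\|_{L^1}=|t_1-t_2|$, $\|\chi_{\bar{t}_i}\|_{L^1}\leq T$, $|\mathbf{v}^k_j|_{\R^d}=|\bar{\mathbf{v}}_i|_{\R^d}=1$ and the triangle inequality. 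This yields, for each $i$,
\begin{align*}
\Big\| \sum_{j\in A^i_k} \mu^k_j \mathbf{v}^k_j \chi_{t^k_j} - \bar{\mu}_i \bar{\mathbf{v}}_i \chi_{\bar{t}_i} \Big\|_{L^1} \leq c\sum_{j\in A^i_k}\mu^k_j\big(|t^k_j-\bar{t}_i|+|\mathbf{v}^k_j-\bar{\mathbf{v}}_i|_{\R^d}\big) + T\Big|\sum_{j\in A^i_k}\mu^k_j-\bar{\mu}_i\Big|.
\end{align*}
Summing over $i=1,\dots,N$ and invoking Lemma~\ref{lem:quantit} to control $\sum_{i=1}^N\sum_{j\in A^i_k}\mu^k_j(|t^k_j-\bar{t}_i|+|\mathbf{v}^k_j-\bar{\mathbf{v}}_i|_{\R^d})\leq c\sqrt{\widehat{r}_j(u_k)}$, valid for all $k$ large enough (as is the existence of the blocks $A^i_k$), then delivers the asserted inequality.

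I do not expect a genuine obstacle here, since this is essentially a mirror of the proof of Lemma~\ref{lem:estofnorms}; the only points requiring some care are verifying that the representation $u_k = \sum_i \mu^k_i \mathbf{v}^k_i \chi_{t^k_i}+C^k$ with the specific $C^k$ of~\eqref{defofCk} is consistent with the defining formula $u_k=B(u'_k,c^k)$ (via $a_{u_k}=c^k$), and keeping the three-term split clean so that only the uniformly bounded quantities $\|\chi_{\bar{t}_i}\|_{L^1}\leq T$ and the unit norms of the jump directions enter the constants.
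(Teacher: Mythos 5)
Your proposal is correct and follows essentially the same route as the paper: the identical representation $u_k=C^k+\sum_i\sum_{j\in A^i_k}\mu^k_j\mathbf{v}^k_j\chi_{t^k_j}$, $\optu=\bar{C}+\sum_i\bar{\mu}_i\bar{\mathbf{v}}_i\chi_{\bar{t}_i}$, the same blockwise triangle inequality, and the same use of~\eqref{eq:estforindic} together with Lemma~\ref{lem:quantit}; your three-term split of each block is just a minor rearrangement of the paper's two-term split and yields the same bound (up to the harmless factor $T$ on the jump-height term, which the paper's own proof also produces).
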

\begin{proof}
According to the definition of the $B$ operator,~\eqref{def:Boperator}, we have
\begin{align*}
\bar{u}=\bar{C} + \sum^N_{i=1} \bar{\mu}_i \bar{\mathbf{v}}_i \chi_{\bar{t}_i},~u_k= C^k + \sum^N_{i=1} \sum_{j \in A^i_k} \mu^k_j \mathbf{v}^k_j \chi_{t^k_j}
\end{align*}
and thus
\begin{align*}
\|u_k-\optu\|_{L^1} &\leq T|C^k-\bar{C}| + \sum^N_{i=1} \left\|\sum_{j \in A^i_k} \mu^k_j \mathbf{v}^k_j \chi_{t^k_j}-\bar{\mu}_i \bar{\mathbf{v}}_i\chi_{\bar{t}_i}\right\|_{L^1}.
\end{align*}
Now fix an arbitrary index~$i \in \{1, \dots,N\}$. We estimate
\begin{align*}
\left\|\sum_{j \in A^i_k} \mu^k_j \mathbf{v}^k_j \chi_{t^k_j}-\bar{\mu}_i \bar{\mathbf{v}}_i\chi_{\bar{t}_i}\right\|_{L^1} &\leq T \Big| \sum_{j \in A^i_k} \mu^k_i -\bar{\mu}_i \Big| + \left\|\sum_{j \in A^i_k} \mu^k_j (\mathbf{v}^k_j \chi_{t^k_j}- \bar{\mathbf{v}}_i\chi_{\bar{t}_i})\right\|_{L^1}
\\ & \leq
T \Big| \sum_{j \in A^i_k} \mu^k_i -\bar{\mu}_i \Big| + \sum_{j \in A^i_k} \mu^k_j\| \mathbf{v}^k_j \chi_{t^k_j}- \bar{\mathbf{v}}_i\chi_{\bar{t}_i}\|_{L^1}
\end{align*}
using that~$\left\|\bar{\mathbf{v}}_i \chi_{\bar{t}_i}\right\|_{L^1(I;\R^d)}\leq T$. Moreover, from~\eqref{eq:estforindic} and \eqref{eq:l2likeest}, we conclude
\begin{align*}
\sum_{j \in A^i_k} \mu^k_j\| \mathbf{v}^k_j \chi_{t^k_j}- \bar{\mathbf{v}}_i\chi_{\bar{t}_i}\|_{L^1} \leq \sum_{j \in A^i_k} \mu^k_j \left( |t^k_j-\bar{t}_i|+T|\mathbf{v}^k_j-\bar{\mathbf{v}}_i|_{\R^d} \right) \leq c \sqrt{\widehat{r}_j(u_k)}.
\end{align*}
Summarizing all previous observations yields the desired estimate.
\end{proof}
Thus to prove~\eqref{eq:strictest} it suffices to quantify the error~$C_k-\bar{C}$ as well as the difference between~$\sum_{j \in A^i_k} \mu^k_j$ and~$\bar{\mu}_i$. This is done in the following proposition.
\begin{prop} \label{prop:convofjumpheights}
For all~$k\in \N$ large enough there holds
\begin{align*}
|\bar{C}-C^k|+\Big| \sum_{j \in A^i_k} \mu^k_i -\bar{\mu}_i \Big| \leq c \sqrt{\widehat{r}_j(u_k)}.
\end{align*}
\end{prop}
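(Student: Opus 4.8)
The plan is to reduce the estimate to the coercivity of the finite-dimensional, injective operator $\widehat{\mathbf{K}}\colon\R^N\times\R^d\to Y$, $\widehat{\mathbf{K}}(m,C)=K\big(C\chi_0+\sum_{i=1}^N m_i\bar{\mathbf{v}}_i\chi_{\bar t_i}\big)$, that already appears in the proof of Corollary~\ref{coroll:uniqueness}. Its injectivity follows from the linear independence of the set in~\eqref{eq:linindass} together with $\bar{\mathbf{v}}_i=\bar p(\bar t_i)/\beta$, and, its domain being finite-dimensional, there is $\kappa>0$ with $\ynorm{\widehat{\mathbf{K}}(m,C)}\geq\kappa\,(|m|_{\R^N}+|C|_{\R^d})$ for all $(m,C)$. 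Writing $\bar\mu=(\bar\mu_1,\dots,\bar\mu_N)$ and $m^k=(m^k_1,\dots,m^k_N)$ with $m^k_i\coloneqq\sum_{j\in A^i_k}\mu^k_j$, I would apply this lower bound to the pair $(m^k-\bar\mu,\,C^k-\bar C)$; the claim then follows once $\ynorm{\widehat{\mathbf{K}}(m^k-\bar\mu,\,C^k-\bar C)}\leq c\sqrt{\widehat r_j(u_k)}$ is established, since $\big|\sum_{j\in A^i_k}\mu^k_j-\bar\mu_i\big|\leq|m^k-\bar\mu|_{\R^N}$ for each $i$.

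To bound that quantity, note first that $\widehat{\mathbf{K}}(\bar\mu,\bar C)=K\bar u=\bar y$, using the representation of $\bar u$ recorded in the proof of Lemma~\ref{lem:estofl1}; the analogous representation of $u_k$ shows that $\widehat{\mathbf{K}}(m^k,C^k)$ differs from $y_k=Ku_k$ only by replacing each genuine jump $\mathbf{v}^k_j\chi_{t^k_j}$, $j\in A^i_k$, with the idealized one $\bar{\mathbf{v}}_i\chi_{\bar t_i}$. Hence, by linearity,
\[
\widehat{\mathbf{K}}(m^k-\bar\mu,\,C^k-\bar C)=K\Big(\sum_{i=1}^N\sum_{j\in A^i_k}\mu^k_j\big(\bar{\mathbf{v}}_i\chi_{\bar t_i}-\mathbf{v}^k_j\chi_{t^k_j}\big)\Big)+(y_k-\bar y).
\]
For the first term I would use $K^*\in\mathcal L(Y;L^\infty(I;\R^d))$ from Assumption~\ref{ass:nondegeneratesource2}, exactly as in the proof of Lemma~\ref{lem:lipofcalK}, to get $\ynorm{Kw}\leq\|K^*\|_{Y,L^\infty}\lonenorm{w}$, then the elementary bound $\|\bar{\mathbf{v}}_i\chi_{\bar t_i}-\mathbf{v}^k_j\chi_{t^k_j}\|_{L^1}\leq|t^k_j-\bar t_i|+T|\mathbf{v}^k_j-\bar{\mathbf{v}}_i|_{\R^d}$ from~\eqref{eq:estforindic}, and finally the quantitative estimate~\eqref{eq:l2likeest} of Lemma~\ref{lem:quantit}; together these give an upper bound $c\sqrt{\widehat r_j(u_k)}$ for the first term. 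The second term satisfies $\ynorm{y_k-\bar y}\leq c\,r_j(u_k)^{1/2}\leq c\sqrt{\widehat r_j(u_k)}$ by Proposition~\ref{prop:estforstates} and $r_j(u_k)\leq\widehat r_j(u_k)$. Combining these two bounds with the coercivity of $\widehat{\mathbf{K}}$ yields the assertion. Here $k$ is taken large enough that the index sets $A^i_k$ from Proposition~\ref{prop:localofoit} are nonempty (Lemma~\ref{lem:convlocalized}) and Proposition~\ref{prop:estforstates} applies.

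The only step I expect to require care is the bookkeeping in the second paragraph: organizing $\widehat{\mathbf{K}}(m^k,C^k)-y_k$ as a sum indexed by the $A^i_k$ and matching each genuine jump $v^k_j$ with the idealized jump $\bar v_i$, so that Lemma~\ref{lem:quantit} can be invoked with precisely the weights $\mu^k_j$ that appear there. Once this matching is set up, no new estimate is needed: the $L^1$-bound on shifted characteristic functions is~\eqref{eq:estforindic}, and the coercivity of $\widehat{\mathbf{K}}$ is just the finite-dimensionality of its domain combined with injectivity.
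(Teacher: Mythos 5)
Your proposal is correct and follows essentially the same route as the paper: lump the coefficients $m^k_i=\sum_{j\in A^i_k}\mu^k_j$, invoke the coercivity of the injective finite-dimensional operator $\widehat{\mathbf{K}}$, and split $\widehat{\mathbf{K}}(m^k-\bar\mu,C^k-\bar C)$ into $K(\tilde u_k-u_k)$ plus $y_k-\bar y$, bounding the former via the Lipschitz-type estimate (Lemma~\ref{lem:lipofcalK}) together with Lemma~\ref{lem:quantit} and the latter via Proposition~\ref{prop:estforstates}. The only cosmetic difference is that you bound $K(\tilde u_k-u_k)$ directly through $\|K^*\|_{Y,L^\infty}$ and the $L^1$-estimate \eqref{eq:estforindic} on shifted characteristic functions rather than citing Lemma~\ref{lem:lipofcalK}, which is equally valid.
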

\begin{proof}
Define~$\tilde{u}_k=C_k+ \sum^N_{i=1} \left( \sum_{j \in A^i_k} \mu^k_j \right) \bar{\mathbf{v}}_i \chi_{\bar{t}_i}$ as well as the vector of lumped coefficients~$\tilde{\mu}^k \in \R^N$,~$\tilde{\mu}^k_i=\sum_{j \in A^i_k} \mu^k_j$. Recall the definition of the injective operator~$\widehat{\mathbf{K}}$ from the proof of Corollary~\ref{coroll:uniqueness}. Then~$\widehat{\mathbf{K}}(\tilde{\mu}^k-\bar{\mu}, C^k -\bar{C})=K(u_k-\bar{u})$ and thus
\begin{align*}
|\bar{C}-C^k|+\Big| \sum_{j \in A^i_k} \mu^k_i -\bar{\mu}_i \Big| \leq c \ynorm{K(\tilde{u}_k-\bar{u})}.
\end{align*}
 Applying Proposition~\ref{prop:estforstates} yields
\begin{align*}
\|K(\tilde{u}_k-\bar{u})\|_Y \leq \|K(u_k-\bar{u})\|_Y+\|K(\tilde{u}_k-u_k)\|_Y \leq \sqrt{r_j(u_k)/\gamma_0}+\|K(\tilde{u}_k-u_k)\|_Y.
\end{align*}
Finally we estimate
\begin{align*}
\|K(\tilde{u}_k-u_k)\|_Y &\leq \sum^N_{i=1} \sum_{j\in A^i_k} \mu^k_j \ynorm{\mathcal{K}(v^k_j-\bar{u}_i,0)} \\&\leq c \sum^N_{i=1} \sum_{j\in A^i_k} \mu^k_j \left( |t^k_j-\bar{t}_i|+|\mathbf{v}^k_j-\bar{\mathbf{v}}_i|_{\R^d} \right) \\&\leq c \sqrt{\widehat{r}_j(u_k)}
\end{align*}
using Lemma~\ref{lem:lipofcalK} in the second inequality and Lemma~\ref{lem:quantit} in the final one.
\end{proof}
Combining the previous results we are in the position to prove linear convergence of~$u_k$ with respect to the strict topology on~$\BV$.
\begin{theorem} \label{thm:confofiterates}
Let Assumptions~\ref{ass:assumponF},~\ref{ass:nondegeneratesource1}, \ref{ass:nondegeneratesource2} hold. Then we have
\begin{align*}
\|u_k-\optu\|_{L^1} + |\mnorm{u'_k}-\mnorm{\optu'}| \leq c \zeta_2^k.
\end{align*}
for some~$\zeta_2 \in (0,1)$ and all~$k\in\N$ large enough.
\end{theorem}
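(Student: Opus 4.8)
The plan is to assemble the quantitative estimates already established in this subsection and then feed in the linear decay of the auxiliary residual. Concretely, Lemma~\ref{lem:estofnorms} bounds $|\mnorm{u'_k}-\mnorm{\optu'}|$ and Lemma~\ref{lem:estofl1} bounds $\|u_k-\optu\|_{L^1}$, both in terms of $\sqrt{\widehat{r}_j(u_k)}$ together with the lumped-height errors $\sum^N_{i=1}\big|\sum_{j\in A^i_k}\mu^k_j-\bar{\mu}_i\big|$ and (for the $L^1$ term) the offset error $|C^k-\bar{C}|$; Proposition~\ref{prop:convofjumpheights} in turn controls exactly these two quantities by $c\sqrt{\widehat{r}_j(u_k)}$. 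Finally Theorem~\ref{thm:linconvergence} gives $\widehat{r}_j(u_k)\le c\zeta^k$ for some $\zeta\in(0,1)$ and all $k$ large. Combining these yields the claim with $\zeta_2=\sqrt{\zeta}$.

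In more detail, I would first fix $k\in\N$ large enough that Lemmas~\ref{lem:estofnorms}, \ref{lem:estofl1}, Proposition~\ref{prop:convofjumpheights} and Theorem~\ref{thm:linconvergence} all apply (taking the maximum of the finitely many thresholds). Inserting Proposition~\ref{prop:convofjumpheights} into Lemma~\ref{lem:estofnorms} gives
\begin{align*}
|\mnorm{u'_k}-\mnorm{\optu'}| \leq c\sqrt{\widehat{r}_j(u_k)},
\end{align*}
and inserting it into Lemma~\ref{lem:estofl1} (which needs both the offset error $|C^k-\bar{C}|$ and the lumped-height error, both supplied by the proposition) gives
\begin{align*}
\|u_k-\optu\|_{L^1} \leq c\sqrt{\widehat{r}_j(u_k)},
\end{align*}
where throughout $c>0$ is a generic constant independent of $k$ (absorbing $N$, $T$ and the various structural constants). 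Adding the two estimates yields $\|u_k-\optu\|_{L^1}+|\mnorm{u'_k}-\mnorm{\optu'}|\le c\sqrt{\widehat{r}_j(u_k)}$. Now apply Theorem~\ref{thm:linconvergence}: there is $\zeta\in(0,1)$ with $\widehat{r}_j(u_k)\le c\zeta^k$ for all $k$ large, hence $\sqrt{\widehat{r}_j(u_k)}\le c\,(\sqrt{\zeta})^{\,k}$, and the statement follows with $\zeta_2\coloneqq\sqrt{\zeta}\in(0,1)$ after renaming constants.

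The proof at this stage is essentially bookkeeping and presents no genuine obstacle: all the substance — the localization of the jumps of $u_k$ near $\{\bar t_i\}$, the quantitative closeness of jump positions, directions and lumped heights to the optimal ones in terms of $\widehat{r}_j(u_k)$, and the linear decay of $\widehat{r}_j(u_k)$ itself — has already been front-loaded into Proposition~\ref{prop:localofoit}, Lemmas~\ref{lem:quantit}, \ref{lem:estofnorms}, \ref{lem:estofl1}, Proposition~\ref{prop:convofjumpheights} and Theorem~\ref{thm:linconvergence}. The only care needed is to collect the $k$-independent constants into a single $c$ and to take the maximum of the finitely many "$k$ large enough" thresholds; note also that the inequality $r_j(u_k)\le\widehat{r}_j(u_k)$ is not required here, since the bounds are already phrased directly in terms of $\widehat{r}_j(u_k)$.
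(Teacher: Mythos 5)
Your argument is correct and follows essentially the same route as the paper, whose proof is precisely the combination of Lemma~\ref{lem:estofnorms} and Lemma~\ref{lem:estofl1} with Proposition~\ref{prop:convofjumpheights}, together with the linear decay of $\widehat{r}_j(u_k)$ from Theorem~\ref{thm:linconvergence}; your explicit choice $\zeta_2=\sqrt{\zeta}$ and the bookkeeping of constants and thresholds simply spell out what the paper leaves implicit.
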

\begin{proof}
The statement directly follows from Lemma~\ref{lem:estofnorms} and Lemma~\ref{lem:estofl1} taking Proposition~\ref{prop:convofjumpheights} into account.
\end{proof}
\section{Numerical examples} \label{sec:num}
The last section is devoted to the numerical illustration of our theoretical results. For this purpose two examples are discussed. First we address the inverse problem of identifying a piecewise constant signal from finitely many data samples. The forward operator~$K$ is modelled by convolution with a Gaussian kernel. Second we consider an optimal control problem for the linear wave equation. Here the control enters as the time-dependent signals of two spatially fixed actuators. In this case, the fidelity term is given by the $L^2$-misfit between the solution to the wave equation and a desired state~$y_d$ over the whole space-time cylinder.
\subsection{Deconvolution from finitely many measurements}
As a first example consider
\begin{align} \label{def:convolprob}
\min_{u \in \operatorname{BV}(I)} j(u)  \coloneqq \left \lbrack \frac{1}{2}\sum^{9}_{i=1} (k(\rho_i)* u-y^i_d)^2+\beta \mnorm{u'} \right \rbrack
\end{align}
where~$I=(0,1)$,~$y_d \in \R^9$ is a given finite dimensional data vector and
\begin{align*}
k(\rho_i)* u= \frac{1}{\sqrt{2\pi}\sigma}\int^1_0 u(t)\, e^{-\frac{(t-\rho_i)}{2\sigma^2}^2}~\mathrm{d}t,~\rho_i= i \cdot 0.1, \quad i=1,\dots,9.
\end{align*}
The deconvolution problem~\eqref{def:convolprob} can be embedded in the general setting \eqref{def:bvprob} by choosing
\begin{align*}
Y=\R^9,~  F(\cdot)=\frac{1}{2}\sum^9_{i=1} ((\cdot)_i-y_i)^2,~(Ku)_i=k(\rho_i)* u.
\end{align*}
In this case the~$\mathcal{K}=K \circ B$ operator is given by
\begin{align*}
\mathcal{K}(q,c)_i=(k(\rho_i),B(q,c))_{L^2} =-\langle \psi_i,q \rangle +\psi_i(1)(c+\langle t, q \rangle), \quad i=1,\dots,9,
\end{align*}
for~$\psi_i$ defined as
\begin{align*}
\psi_i(t)=\frac{1}{2}\left( \operatorname{erf} \left(\frac{t-\rho_i}{\sqrt{2}\sigma} \right)+\operatorname{erf} \left(\frac{\rho_i}{\sqrt{2}\sigma} \right) \right)
\end{align*}
where~$\operatorname{erf}(\cdot)$ denotes the error function.
Moreover we readily verify~$\mathcal{K}^* \colon \R^m \to \Cc^2(I)$ and
\begin{align*}
p_k(\cdot)=\int^{\cdot}_0 K^* \nabla F(Ku_k)= \sum^m_{i=1}  \psi_i(\cdot){(k(\rho_i)* u_k-y_i)}.
\end{align*}
In order to determine a global extremum of~$p_k$, see step 3., we find solutions of~$p'_k(t)=0$ using a Newton method starting at equally spaced points~$t^i_0= i \cdot 0.1$,~$i=1,\dots,9$. Then~$\hat{t}_k$ is chosen from the set of computed solutions by comparing the corresponding function values. The solution of the finite dimensional subproblems relies on a semismooth Newton method for the "normal map" reformulation of its first order sufficient optimality conditions, see e.g.~\cite{milzarek}. In each iteration the method is warmstarted using the current magnitudes~$\mu^k_i$ and the mean value~$c^k$ to construct a good starting point. Moreover we further enhance its practical performance by incorporating a heuristic globalization strategy based on damped Newton steps. Finally Algorithm~\ref{alg:accgcg} is stopped if the upper bound
\begin{align*}
\Phi_k \coloneqq M_k(\cnorm{p_k}-\beta),~M_k= F(Ku_k)+\beta \sum^{\# \mathcal{A}_k}_{i=1} \mu^k_i
\end{align*}
on the residual~$r_j(u_k)$, see Theorem~\ref{thm:slowconvalg}, is smaller than~$10^{-13}$.
\subsubsection{Structural assumptions on~$\bar{p}$}
We solved~\eqref{def:convolprob} for~$\beta \approx 10^{-5}$ and observations~$y_d= Ku^\dagger+ \zeta$ where~$u^\dagger \in \operatorname{BV}(I)$ and~$\zeta \in \R^9$ is a random perturbation. The ground truth~$u^\dagger$ and the computed minimizer~$\optu$ are depicted in Figure~\ref{fig:sol}. Before addressing the performance of Algorithm~\ref{alg:accgcg} we numerically verify Assumptions~\ref{ass:nondegeneratesource1} and~\ref{ass:nondegeneratesource2}. For this purpose we plot the dual variable~$\bar{p}$ as well as its second derivative~$\bar{p}{''}$ in Figures~\ref{fig:adjointval}~and~\ref{fig:hessian}. The functional values corresponding to the jumps of~$\optu$ are marked by red crosses.
\begin{figure}[htb]
\begin{subfigure}[t]{.31\linewidth}
\centering
\includegraphics[scale=0.38]{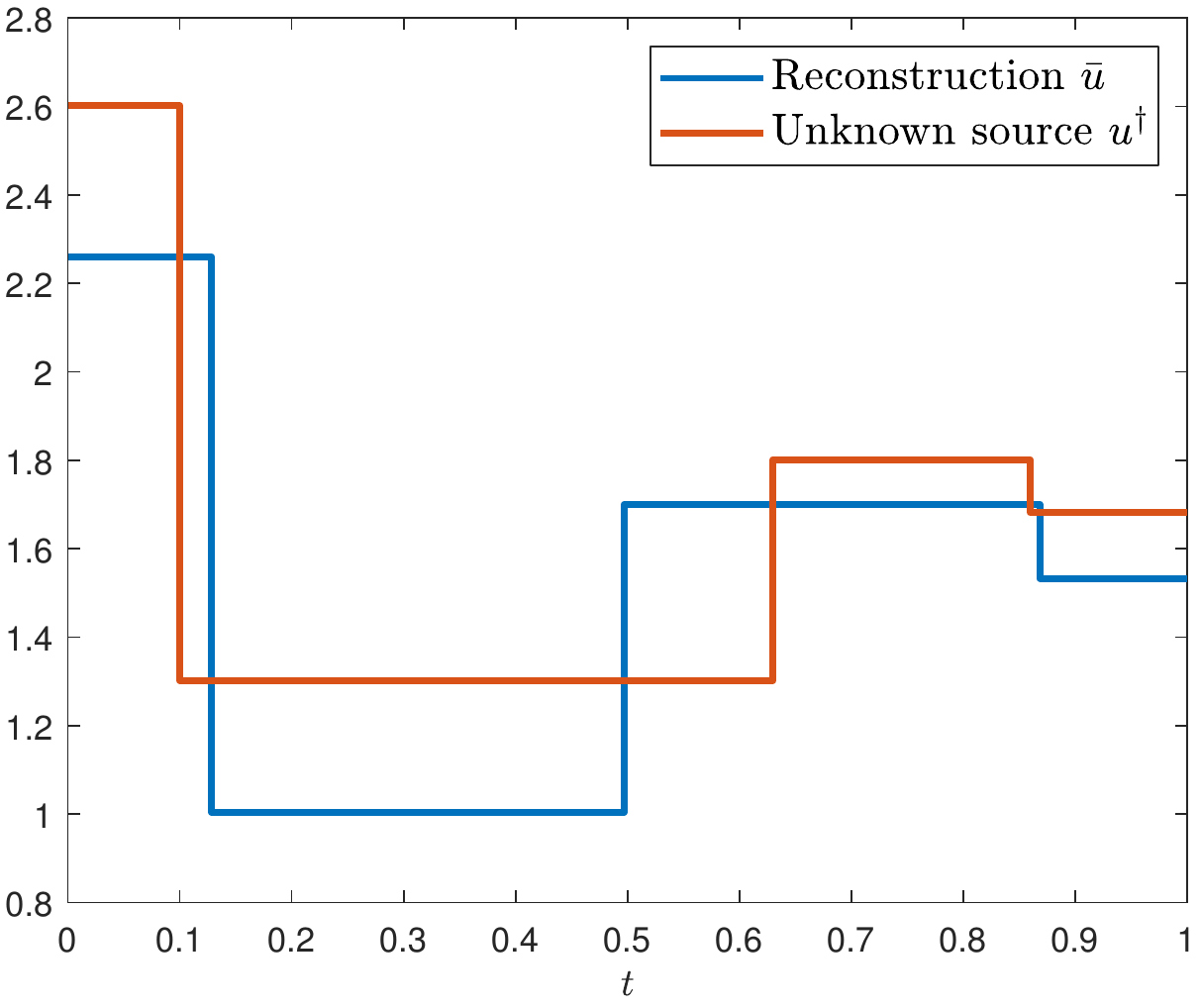}
\caption{Ground truth~$u^\dagger$ and~$\bar{u}$.}
\label{fig:sol}
\end{subfigure}
\quad
\begin{subfigure}[t]{.31\linewidth}
\centering
\includegraphics[scale=0.38]{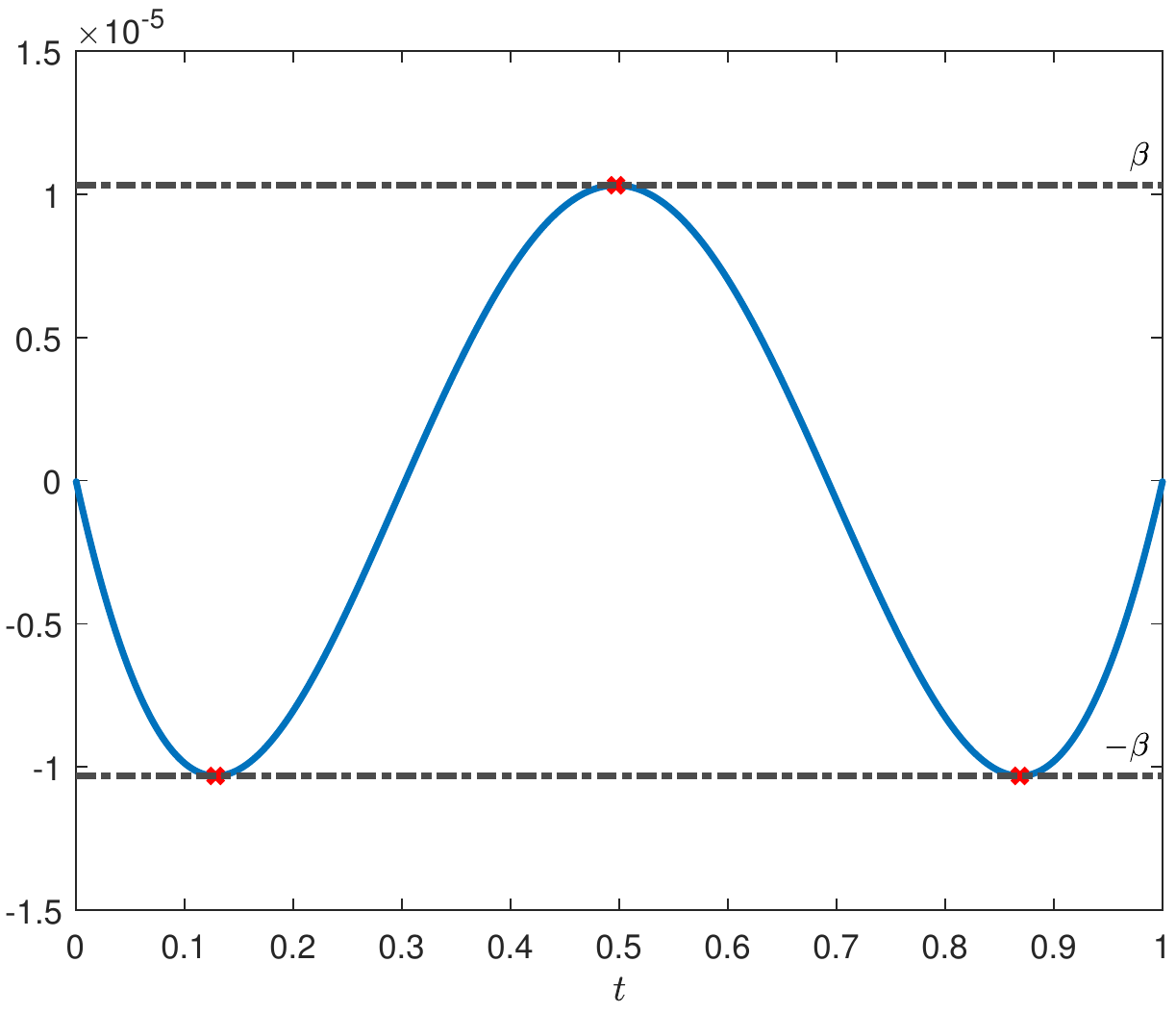}
\caption{Dual variable~$\bar{p}$.}
\label{fig:adjointval}
\end{subfigure}
\quad
\begin{subfigure}[t]{.31\linewidth}
\centering
\includegraphics[scale=0.38]{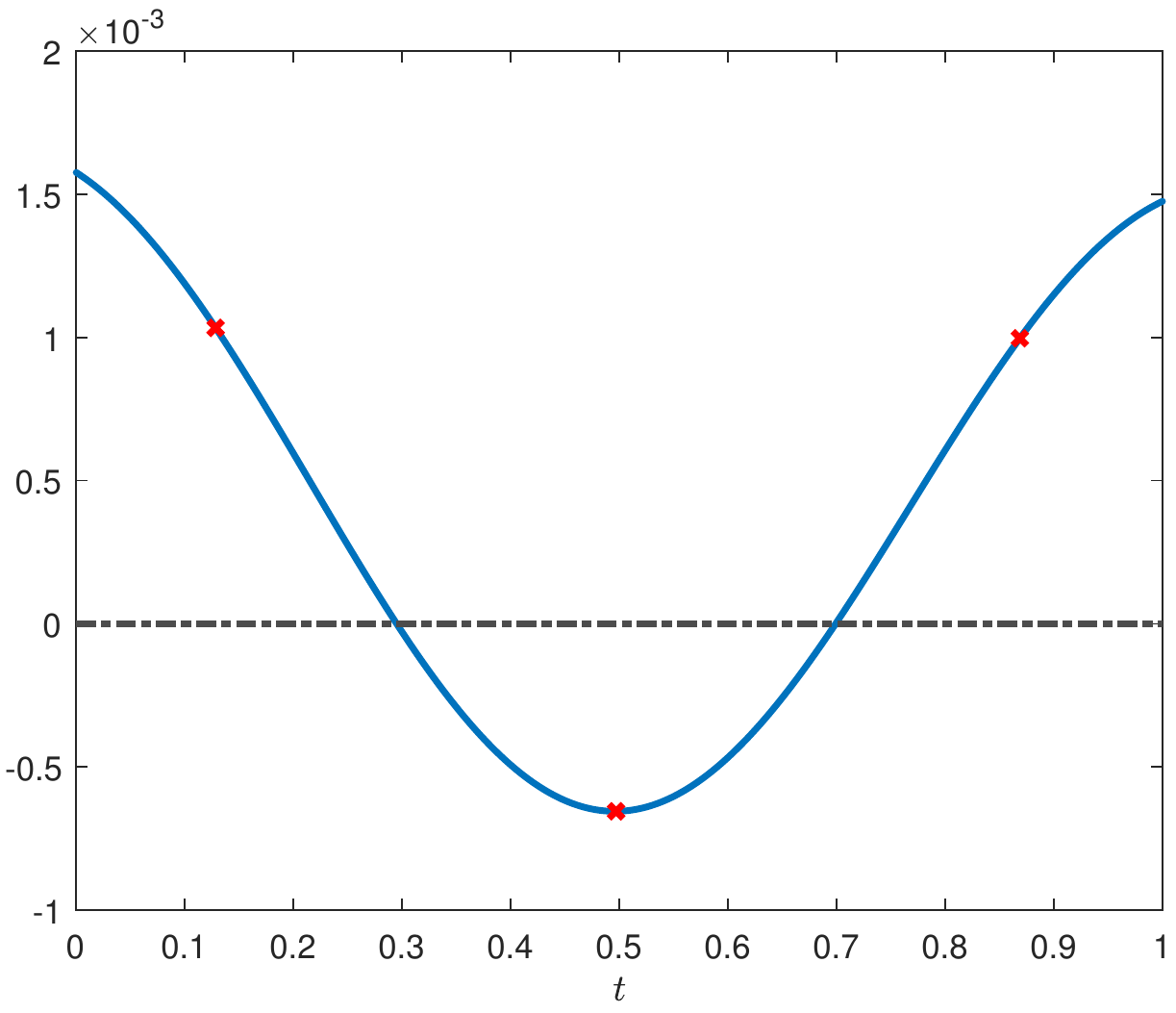}
\caption{Second derivative~$\bar{p}''$.}
\label{fig:hessian}
\end{subfigure}
\caption{Ground truth, reconstruction and dual variable.}
\label{fig:adjoint}
\end{figure}

First we point out that~$\cnorm{\bar{p}}=\beta$ and~$\bar{p}$ achieves its global maximum/minimum in three distinct points~$\{\bar{t}_i\}^{3}_{i=1}$ which coincide with the jumps of~$\bar{u}$. In particular, the optimal magnitudes satisfy~$\bar{\mu}_i>0$. Moreover the operator~$\widehat{\mathbf{K}}$ from the proof of Corollary~\ref{coroll:uniqueness} has full rank which is equivalent to the linear independence of~\eqref{eq:linindass}. Second, there holds~$\bar{p}''(\bar{t}_i)\neq 0$. Hence, see Remark~\ref{rem:quadgrowth}, the quadratic growth condition of Assumption~\ref{ass:nondegeneratesource2} holds.
\subsubsection{Practical performance of Algorithm~\ref{alg:accgcg}} \label{subsubsec:practical}
In order to assess the performance of Algorithm~\ref{alg:accgcg} we plot the residuals~$r_j(u_k)$ alongside the sublinear convergence rate from Theorem~\ref{thm:slowconvalg} as well as a linear rate with~$\zeta=0.33$ in Figure~\ref{fig:res}.
\begin{figure}[htb]
\begin{subfigure}[t]{.31\linewidth}
\centering
\scalebox{.38}{
%
%
\definecolor{mycolor1}{rgb}{0.00000,0.44700,0.74100}%
\definecolor{mycolor2}{rgb}{0.85000,0.32500,0.09800}%
\begin{tikzpicture}

\begin{axis}[%
width=4.521in,
height=3.566in,
at={(0.758in,0.481in)},
scale only axis,
xmin=1,
xmax=11,
xlabel style={font=\color{white!15!black}},
xlabel={$k$},
ymode=log,
ymin=1e-18,
ymax=0.0001,
yminorticks=true,
axis background/.style={fill=white},
legend style={at={(0.03,0.03)}, anchor=south west, legend cell align=left, align=left, draw=white!15!black}
]
\addplot [color=mycolor1, mark=diamond, mark options={solid, mycolor1}, line width=2pt]
  table[row sep=crcr]{%
1	8.56136447512647e-05\\
2	4.04139462522138e-05\\
3	7.28031473574152e-06\\
4	3.91392054451726e-06\\
5	3.0036322289876e-07\\
6	2.78867312658149e-07\\
7	1.06631927949789e-07\\
8	4.45068773502476e-11\\
9	2.90800833933901e-11\\
10	2.09272703333151e-14\\
11	4.33680868994202e-18\\
};
\addlegendentry{$r_j(u_k)$}

\addplot [color=mycolor2, dashdotted,, line width=2pt]
  table[row sep=crcr]{%
1	8.56136447512644e-05\\
2	4.28068223756322e-05\\
3	2.85378815837548e-05\\
4	2.14034111878161e-05\\
5	1.71227289502529e-05\\
6	1.42689407918774e-05\\
7	1.22305206787521e-05\\
8	1.0701705593908e-05\\
9	9.51262719458492e-06\\
10	8.56136447512644e-06\\
11	7.7830586137513e-06\\
};
\addlegendentry{$\mathcal{O}(1/k)$}

\addplot [color=black, dashed, line width=2pt]
  table[row sep=crcr]{%
1	8.56136447512644e-05\\
11	1.23438351074894e-09\\
};
\addlegendentry{$\mathcal{O}(0.33^k)$}

\end{axis}
\end{tikzpicture}%
}
\caption{Residual error over $k$.}
\label{fig:res}
\end{subfigure}
\quad
\begin{subfigure}[t]{.31\linewidth}
\centering
\scalebox{.38}{
\centering
%
%
\definecolor{mycolor1}{rgb}{0.00000,0.44700,0.74100}%
\definecolor{mycolor2}{rgb}{0.85000,0.32500,0.09800}%
\begin{tikzpicture}

\begin{axis}[%
width=4.521in,
height=3.566in,
at={(0.758in,0.481in)},
scale only axis,
xmin=1,
xmax=11,
xlabel style={font=\color{white!15!black}},
xlabel={$k$},
ymode=log,
ymin=1e-10,
ymax=100,
yminorticks=true,
axis background/.style={fill=white},
legend style={legend cell align=left, align=left, draw=white!15!black}
]
\addplot [color=mycolor1, mark=diamond, mark options={solid, mycolor1}, line width=2pt]
  table[row sep=crcr]{%
1	0.368870083210413\\
2	0.339139563842634\\
3	0.318087988861314\\
4	0.255008855845176\\
5	0.128137527811556\\
6	0.073175325921935\\
7	0.0422901583983521\\
8	0.000774667206797643\\
9	0.000660807071958562\\
10	1.87784362175071e-05\\
11	1.17571498409281e-07\\
};
\addlegendentry{$\|u_k-\bar{u}\|_{L^1(I)}$}

\addplot [color=mycolor2, mark=square, mark options={solid, mycolor2}, line width=2pt]
  table[row sep=crcr]{%
1	1.90897063276989\\
2	1.96698048147293\\
3	1.20033776962929\\
4	1.11079637345953\\
5	0.643284943757642\\
6	0.278217718118148\\
7	0.131244744698602\\
8	0.000145394822138378\\
9	5.89831465718227e-05\\
10	1.27937253457944e-06\\
11	7.46854089683779e-09\\
};
\addlegendentry{$|\|u_k '\|_{\mathcal{M}}-\|\bar{u} '\|_{\mathcal{M}}|$}

\addplot [color=black, dashed, line width=2pt]
  table[row sep=crcr]{%
1	2\\
11	0.0230763856080623\\
};
\addlegendentry{$\mathcal{O}(0.64^k)$}

\end{axis}
\end{tikzpicture}%
}
\caption{Norm/$L^1$-error over~$k$.}
\label{fig:stricttop}
\end{subfigure}
\quad
\begin{subfigure}[t]{.31\linewidth}
\centering
\scalebox{.38}{

\centering
%
%
\definecolor{mycolor1}{rgb}{0.00000,0.44700,0.74100}%
\begin{tikzpicture}

\begin{axis}[%
width=4.521in,
height=3.566in,
at={(0.758in,0.481in)},
scale only axis,
xmin=1,
xmax=12,
xlabel style={font=\color{white!15!black}},
xlabel={$k$},
ymin=0,
ymax=4,
ytick={0, 1, 2, 3, 4},
axis background/.style={fill=white},
legend style={legend cell align=left, align=left, draw=white!15!black}
]
\addplot [color=mycolor1, line width=2pt]
  table[row sep=crcr]{%
1	1\\
3	1\\
4	2\\
6	2\\
7	3\\
12	3\\
};
\addlegendentry{$\# \mathcal{A}_k$}

\end{axis}
\end{tikzpicture}%
}
\caption{Active set size over~$k$.}
\label{fig:support}
\end{subfigure}
\caption{Convergence behaviour of relevant quantities.}
\end{figure}
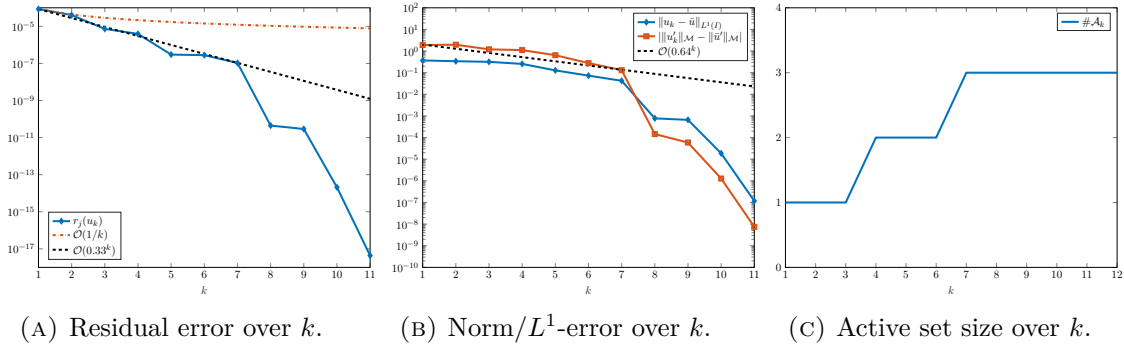
Next to it, in Figure~\ref{fig:stricttop}, we report on the convergence of the iterates~$u_k$ in~$L^1(I)$ and the norms~$\mnorm{u'_k}$. As predicted by Theorems~\ref{thm:linconvergence} and~\ref{thm:confofiterates} all considered quantities converge at least linearly. Moreover we plot the evolution of the size of the active set in Figure~\ref{fig:support}. Note that $\# \mathcal{A}_k$ is not strictly increasing. This is testament to the efficiency of the pruning step 7. of Algorithm~\ref{alg:accgcg} in combination with the full resolution of the subproblem~\eqref{def:subprobjumps} in step 5. Finally we compare Algorithm~\ref{alg:accgcg} to the~\textit{Fast iterative shrinkage-thresholding algorithm} (FISTA) from~\cite{beckfista,chambollefista}. However, in contrast to our proposed method, its practical application to~\eqref{def:convolprob} requires a discretization of the interval~$(0,1)$. For this purpose we consider a uniform partition of~$[0,1]$ into subintervals~$[t_i,t_{i+1}]$,~$i=1, \dots, N_h-1$, where~$t_0=0$ and~$t_i=t_{i-1}+h$, else, with~$h=1/(N_h -1)$. Subsequently we replace~$\operatorname{BV}(I)$ in~\eqref{def:convolprob} by the finite-dimensional subspace
\begin{align*}
\operatorname{BV}_{h}(I)=\left\{\,u \in \operatorname{BV}(I) \;|\;u=B \left(\sum^{N_h-1}_{i=1}\mu^h_i \delta_{t^h_i},c\right),~\mu\in \R^{N_h -1},~c\in \R \,\right\}
\end{align*}
and apply FISTA with constant stepsize as described in~\cite{beckfista}. Additionally we also use this comparison to study the behaviour of Algorithm~\ref{alg:accgcg} under perturbations and apply it to the discretized problem. In this context, we restrict the search for the new candidate jump position~$\hat{t}_k$ in step 3. of Algorithm~\ref{alg:accgcg} to the set of nodes of the partition. More in detail we choose~$\hat{t}_k \in \{t^h_i\}^{N_h-1}_{i=1}$ such that
\begin{align*}
|p_k(\hat{t}_k)|=\max_{i=1,\dots, N_h -1} |p_k(t^h_i)|.
\end{align*}
The other steps of the method remain the same.
\begin{figure}[htb]
\begin{subfigure}[t]{.48\linewidth}
\centering
\includegraphics[scale=0.5]{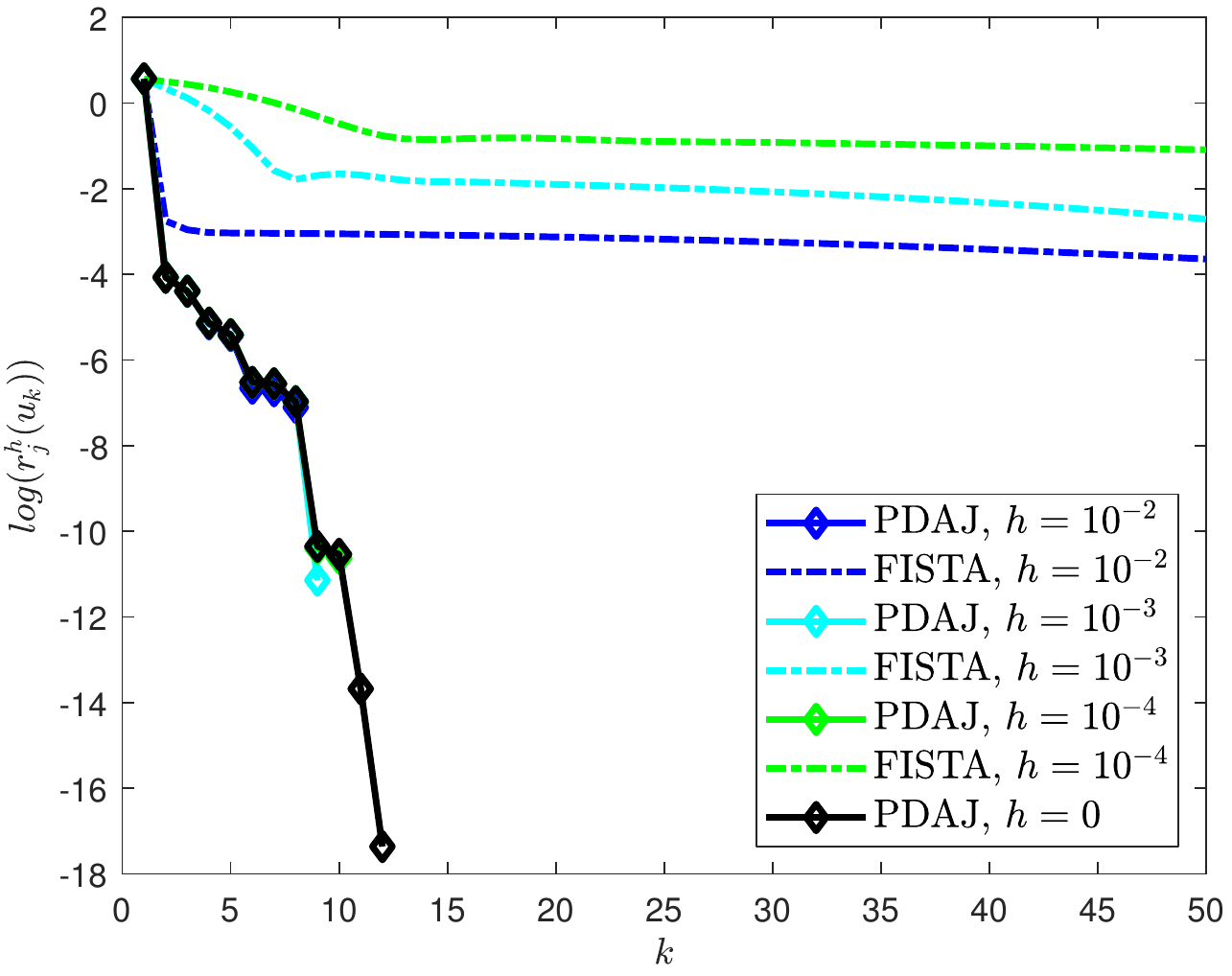}
\caption{Residual vs. iter. number.}
\label{fig:meshinditer}
\end{subfigure}
\quad
\begin{subfigure}[t]{.48\linewidth}
\centering
\includegraphics[scale=0.5]{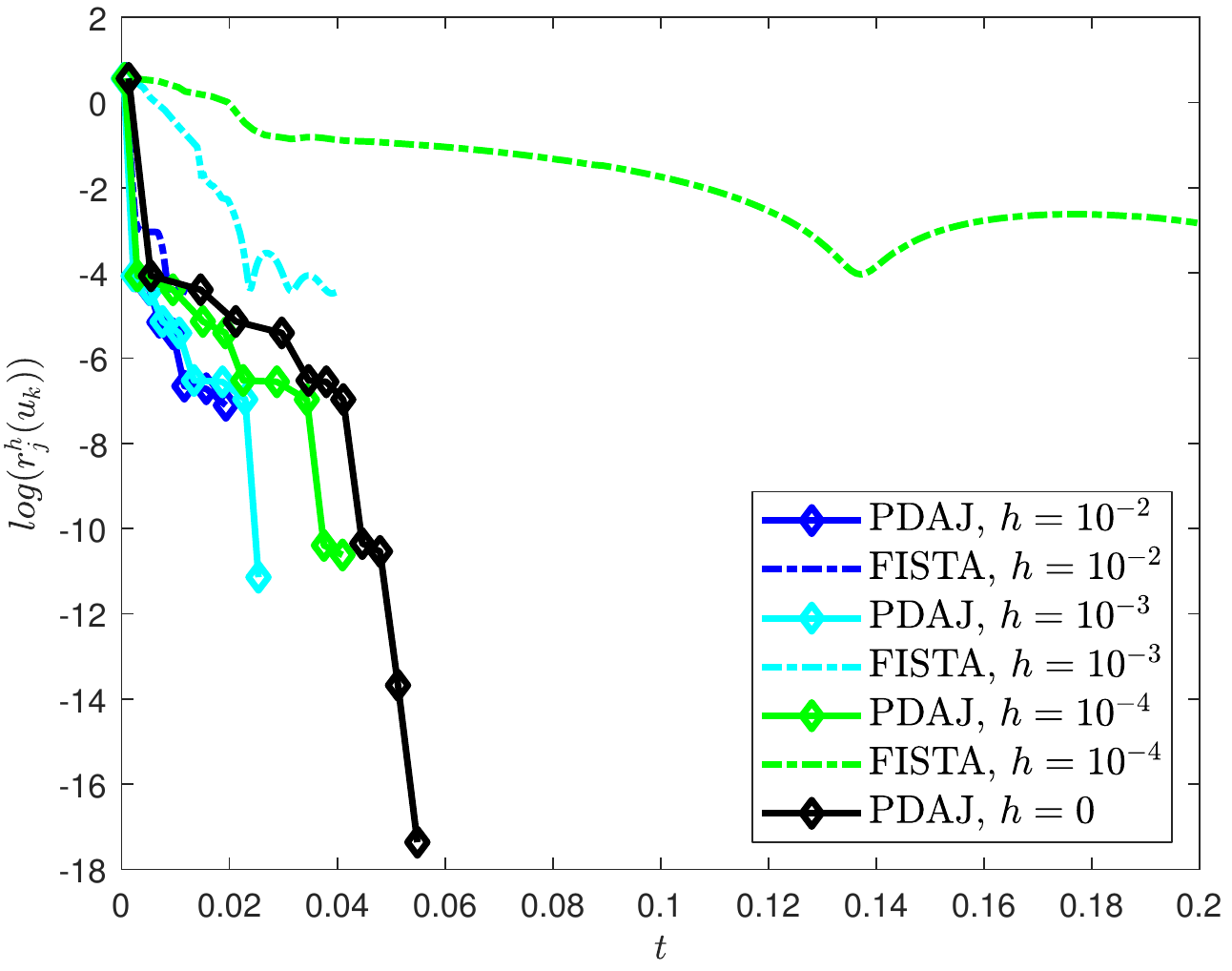}
\caption{Residual vs. comp. time in~seconds.}
\label{fig:meshindtime}
\end{subfigure}
\caption{Comparison of Algorithm~\ref{alg:accgcg} and FISTA on different grids.}
\label{fig:meshind}
\end{figure}
In Figure~~\ref{fig:meshinditer} we plot the behaviour of the residual~$r_j(u_k)=j(u_k)-\min_{u \in \operatorname{BV}_h(I)} j(u)$ for FISTA and our method with different grid widths~$h=10^{-2},10^{-3},10^{-1}$. Additionally we also include Algorithm~\ref{alg:accgcg}~\textit{without} discretization in the plot. This is formally denoted by~$"h=0"$. In both methods, the same starting point~$u_0$ is used. We observe that Algorithm~\ref{alg:accgcg} solves the problem on each refinement level in a few iterations while the convergence of FISTA significantly slows down after the first iterations. Moreover Algorithm~\ref{alg:accgcg} exhibits strong mesh-independence i.e. its convergence is stable w.r.t. to~$h$ and is essentially governed by its behaviour on the continuous problem. In contrast, the convergence behaviour of FISTA degenerates as~$h$ gets smaller. Let us however point out that the per iteration cost of both algorithms is wildly different. In fact, the practical realization of FISTA only requires the computation of one proximal operator per iteration, which can be done analytically, while Algorithm~\ref{alg:accgcg} relies on determining a global extremum of~$p_k$ as well as the full resolution of~\eqref{def:subprobjumps}. To respect the different cost per iteration of both methods we also give a comparison in terms of the computational time in Figure~\ref{fig:meshindtime}. For this purpose we plot the convergence history of Algorithm~\ref{alg:accgcg} (up to optimality) and of FISTA (first 200 iterations) as a function of time. We observe that the more complicated subproblems in Algorithm~\ref{alg:accgcg} do~\textit{not} lead to highly increased computational times. This is, on the one hand, a consequence of the use of an efficient second order optimization scheme for~\eqref{def:subprobjumps} in combination with a warmstart. On the other hand this is also attributed to the observation that the active set size~$\# \mathcal{A}_k$, and thus the dimension of the subproblems~\eqref{def:subprobjumps}, is essentially independent of the underlying discretization. We omit an additional plot showcasing the convergence of~$\# \mathcal{A}_k$ on the different discretization levels since the resulting curves align themselves with the plot in Figure~\ref{fig:support}.
\subsection{Optimal control of the wave equation}
In this section we apply the proposed method for the solution of a PDE-constraint optimization problem of the form
\begin{equation}\label{eq:num_prob}
\min_{u\in BV(I;\mathbb R^2),y \in L^2(I \times \Omega)}J(u)=\frac 12\|y-y_d\|_{L^2(I\times \Omega)}^2+\beta \|u'\|_{\M(I;\mathbb R^2)}
\end{equation}
where the vector-valued control~$u$ is connected to the state variable~$y$ by a linear wave equation of the form
\begin{equation} \label{eq:wave-equation}
    \left\{\begin{aligned}
            \partial_{tt} y - \Delta y &= u_1(t)\delta_{x_1}(x)+u_2(t)\delta_{x_2}(x) && \text{ in } I \times \Omega, \\
            {\partial_\nu y} &= 0 && \text{ on } (0,T)\times \partial \Omega, \\
            y(0) &= 0,\quad {\partial_t} y(0) = 0, && \text{ on }\Omega.
    \end{aligned}\right.
\end{equation}
with $x_1=(0.5,0.5)$, $x_2=(-0.5,-0.5)$, $\beta=10^{-5}$, $\Omega=(-1,1)^2$ and $T=1$. The desired state~$y_d \in L^2(I \times \Omega)$ is given by~$y_d=y^*_d+ \zeta$ where~$y^*_d$ is the unique solution of~\eqref {eq:wave-equation} for the reference source~$u^*=(u^*_1,u^*_2)$ given by 
\[
u^*_1(t)=
\begin{cases}
0.05 &0<t\leq 0.25,\\
0.65 &0.25 < t\leq0.5,\\
0.15 &0.5 < t \leq0.75,\\
0.35 &0.75 < t \leq 1
\end{cases}
\quad
u^*_2(t)=
\begin{cases}
0.775 &0<t\leq 0.25,\\
-0.025 &0.25 < t\leq0.5,\\
0.975 &0.5 < t \leq0.75,\\
0.275 &0.75 < t \leq 1
\end{cases}
\]
and~$\zeta \in L^2(I\times \Omega)$,~$\|\zeta\|_{L^2(I\times \Omega)}/\|y^*_d\|_{L^2(I\times \Omega)}=0.05$, is a noise term.

Using the $L^2(I\times \Omega)$ regularity of $y$ from \cite{Triggiani91,kunisch2016optimal} we can eliminate the PDE-constraint by introducing the linear continuous solution operator $K\in L(L^2(I;\R^2),L^2(I\times \Omega))$ which maps $u$ to $y$.
The adjoint operator $K^\ast\in L(L^2(I \times \Omega),L^2(I;\R^2))$ of $K$ is defined by the mapping $\phi \mapsto (p(\cdot,x_1), p(\cdot,x_2))$ where $p$ is the solution of the corresponding adjoint state equation
\begin{equation} \label{eq:wave-equation-adjoint}
    \left\{\begin{aligned}
            \partial_{tt} p - \Delta p &= \phi && \text{ in } I \times \Omega, \\
            {\partial_\nu p} &= 0 && \text{ on } (0,T)\times \partial \Omega, \\
            p(T) &= 0,\quad {\partial_t} p(T) = 0, && \text{ on }\Omega.
    \end{aligned}\right.
\end{equation}
for $\phi \in L^2(I \times \Omega)$. The operator $K^\ast$ is well-defined according to \cite{Triggiani91,kunisch2016optimal}.\\
In order to apply Algorithm~\ref{alg:accgcg} to \eqref{eq:num_prob} we need to discretize the wave equation using a finite element method. For this purpose, consider ansatz and test spaces spanned by products of piecewise linear and continuous functions on a uniform time grid in $I$ and a spatial triangulation of $\Omega$. The adjoint equation is discretized consistently. Finally, as in Section~\ref{subsubsec:practical}, we also replace the control space by picewise constant functions on the time grid and then apply a discretized version of Algorithm~\ref{alg:accgcg} to the problem. The finite dimensional subproblems in step 5. are again solved by a semismooth Newton method. We plot the computed function~$\bar{u}$ alongside the reference~$u^*$ as well as the the norm of the optimal dual variable in Figure~\ref{fig:supportwave1}-\ref{fig:supportwave2}.
\begin{figure}[htb]
\begin{subfigure}[t]{.48\linewidth}
\centering
\includegraphics[scale=0.4]{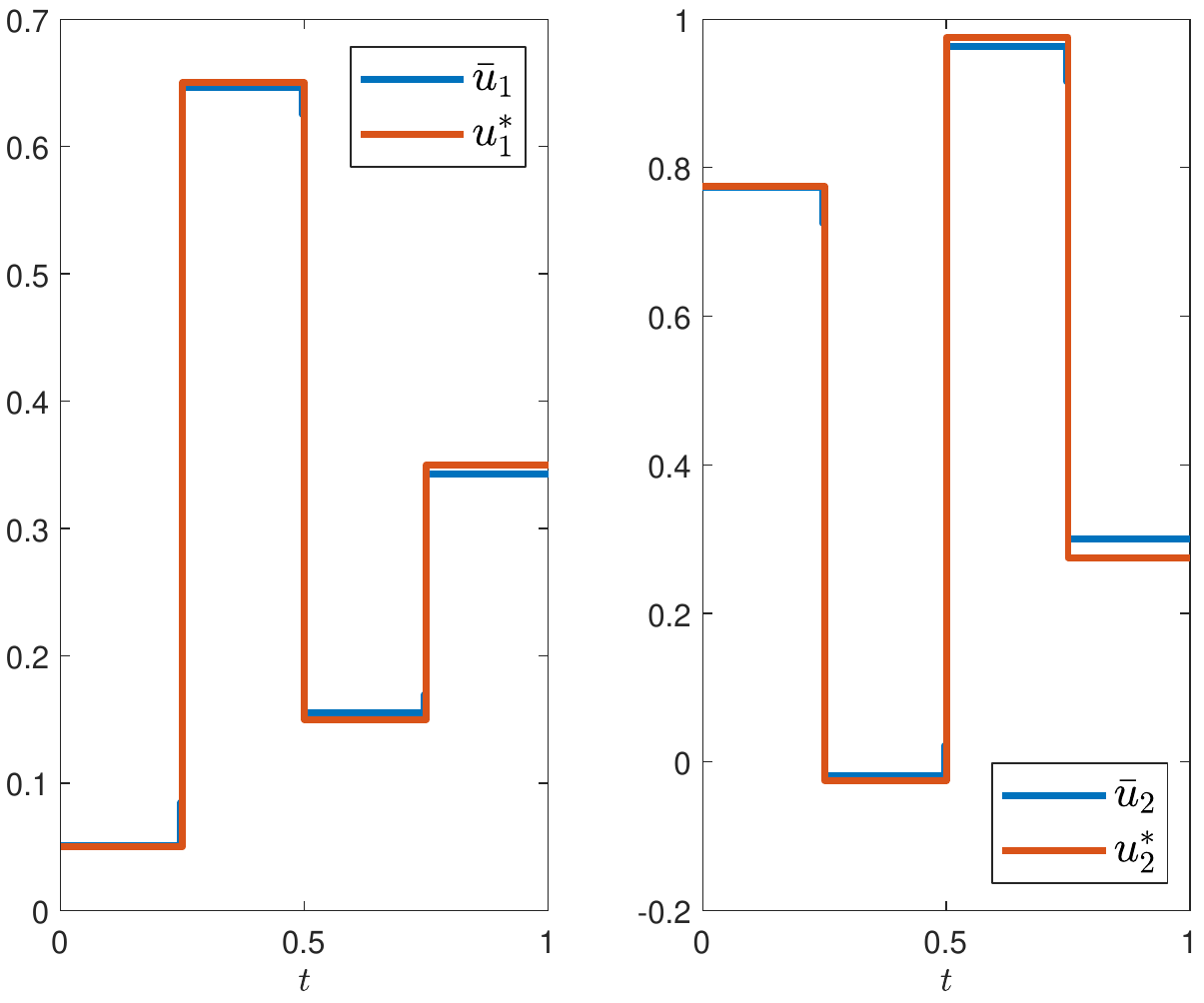}
\caption{Reference~$u^*$ and~$\optu$.}
\label{fig:supportwave1}
\end{subfigure}
\quad
\begin{subfigure}[t]{.48\linewidth}
\centering
\includegraphics[scale=0.4]{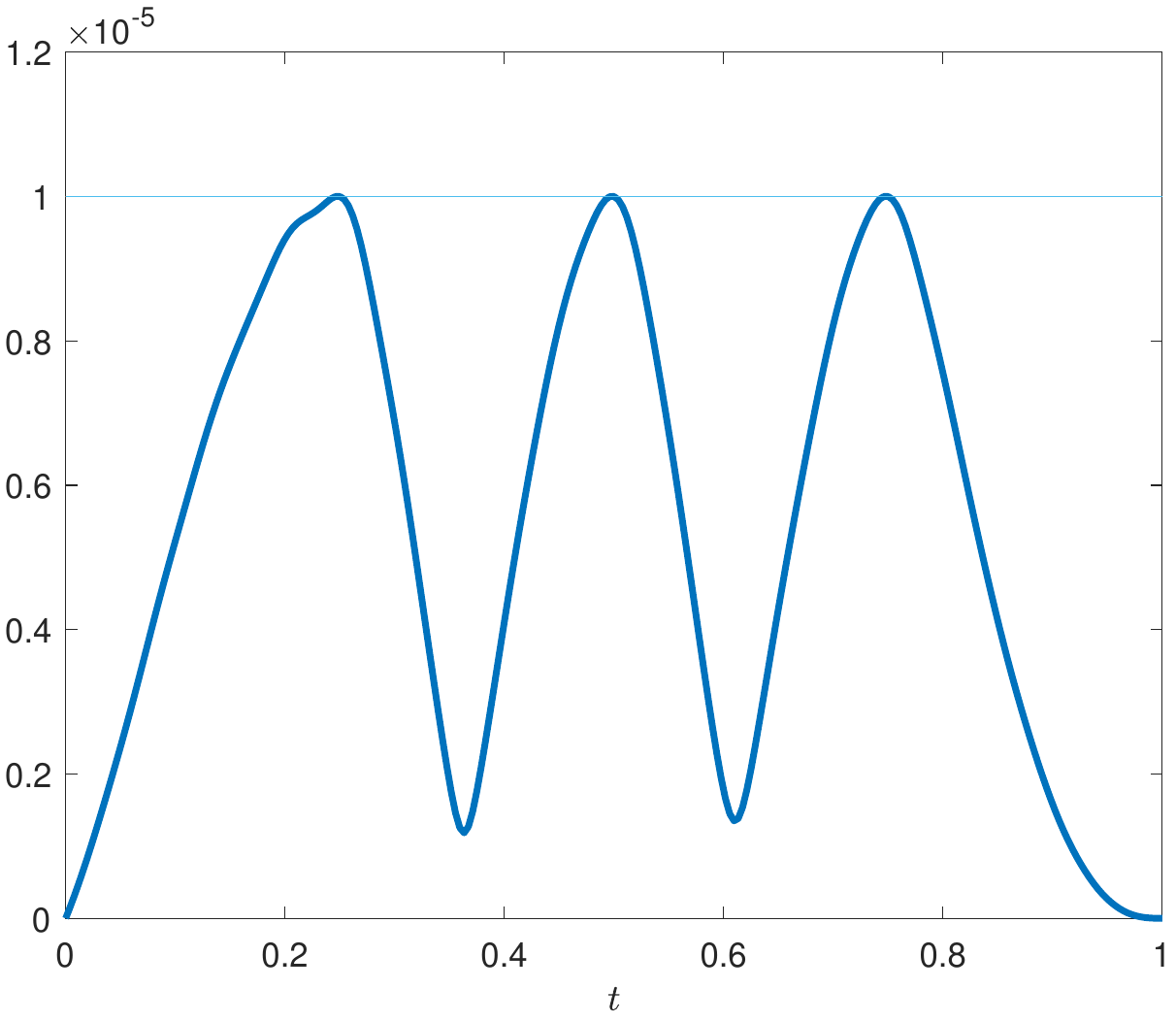}
\caption{Norm of dual variable~$\bar{p}$.}
\label{fig:supportwave2}
\end{subfigure}
\begin{subfigure}[t]{.48\linewidth}
\centering
\scalebox{.4}{
%
%
\definecolor{mycolor1}{rgb}{0.00000,0.44700,0.74100}%
\definecolor{mycolor2}{rgb}{0.85000,0.32500,0.09800}%
\begin{tikzpicture}

\begin{axis}[%
width=4.521in,
height=3.566in,
at={(0.758in,0.481in)},
scale only axis,
xmin=0,
xmax=40,
xlabel style={font=\color{white!15!black}},
xlabel={$k$},
ymode=log,
ymin=1e-09,
ymax=0.001,
yminorticks=true,
axis background/.style={fill=white},
legend style={at={(0.03,0.03)}, anchor=south west, legend cell align=left, align=left, draw=white!15!black}
]
\addplot [color=mycolor1, line width=2.0pt, mark=diamond, mark options={solid, mycolor1}]
  table[row sep=crcr]{%
0	0.000746699900576296\\
1	0.000148151251650682\\
2	9.92366391637287e-05\\
3	3.26136088932847e-05\\
4	2.49548012975089e-05\\
5	1.46215152006752e-05\\
6	1.3558595331994e-05\\
7	1.23489608401163e-05\\
8	9.55317285464985e-06\\
9	7.47582086684935e-06\\
10	5.59513954667131e-06\\
11	3.58512094747384e-06\\
12	3.45591423167006e-06\\
13	2.57268113973669e-06\\
14	2.3713101286646e-06\\
15	1.47036040804054e-06\\
16	9.71972376444888e-07\\
17	8.36808277086073e-07\\
18	6.32180017774711e-07\\
19	6.14034698622825e-07\\
20	2.93730419674924e-07\\
21	2.34990300661242e-07\\
22	2.28418348310866e-07\\
23	2.11464707082787e-07\\
24	1.64851012660129e-07\\
25	1.35860445973388e-07\\
26	1.01372643919131e-07\\
27	9.94379360378029e-08\\
28	9.86307688326851e-08\\
29	4.05419026055723e-08\\
30	3.941061054151e-08\\
31	3.74893654628793e-08\\
32	3.56036955976102e-08\\
33	3.43940409464446e-08\\
34	1.84037924776706e-08\\
35	6.45885325541303e-09\\
36	1.74292502808516e-09\\
37	1.70075256309532e-09\\
38	1.3901987638051e-09\\
39	1.0703220590437e-09\\
};
\addlegendentry{$r_j(u_k)$}

\addplot [color=mycolor2, dashdotted, line width=2.0pt]
  table[row sep=crcr]{%
0	0.000746699900576296\\
1	0.000373349950288148\\
2	0.000248899966858765\\
3	0.000186674975144074\\
4	0.000149339980115259\\
5	0.000124449983429383\\
6	0.000106671414368042\\
7	9.3337487572037e-05\\
8	8.29666556195884e-05\\
9	7.46699900576296e-05\\
10	6.78818091432996e-05\\
11	6.22249917146913e-05\\
12	5.74384538904843e-05\\
13	5.33357071840211e-05\\
14	4.97799933717531e-05\\
15	4.66687437860185e-05\\
16	4.39235235633115e-05\\
17	4.14833278097942e-05\\
18	3.92999947671735e-05\\
19	3.73349950288148e-05\\
20	3.55571381226808e-05\\
21	3.39409045716498e-05\\
22	3.24652130685346e-05\\
23	3.11124958573457e-05\\
24	2.98679960230518e-05\\
25	2.87192269452422e-05\\
26	2.76555518731961e-05\\
27	2.66678535920106e-05\\
28	2.57482724336654e-05\\
29	2.48899966858765e-05\\
30	2.40870935669773e-05\\
31	2.33343718930093e-05\\
32	2.26272697144332e-05\\
33	2.19617617816558e-05\\
34	2.13342828736085e-05\\
35	2.07416639048971e-05\\
36	2.01810783939539e-05\\
37	1.96499973835867e-05\\
38	1.91461512968281e-05\\
39	1.86674975144074e-05\\
};
\addlegendentry{$\mathcal{O}(1/(k+1))$}

\addplot [color=black, dashed, line width=2.0pt]
  table[row sep=crcr]{%
0	0.000746699900576296\\
1	0.00055173694590127\\
2	0.000407678717028778\\
3	0.000301234016595966\\
4	0.000222581971940746\\
5	0.000164465935132023\\
6	0.000121523965229545\\
7	8.97941212765843e-05\\
8	6.63489230342661e-05\\
9	4.90252538275569e-05\\
10	3.62247856173204e-05\\
11	2.67665129819934e-05\\
12	1.97777903997494e-05\\
13	1.46138196394714e-05\\
14	1.07981589519578e-05\\
15	7.97876528028398e-06\\
16	5.89551382611597e-06\\
17	4.35619824032315e-06\\
18	3.21879715130724e-06\\
19	2.37837089353744e-06\\
20	1.75737949343244e-06\\
21	1.2985286240799e-06\\
22	9.59483477448267e-07\\
23	7.08962841807618e-07\\
24	5.23853013499166e-07\\
25	3.87075264836833e-07\\
26	2.86010115027701e-07\\
27	2.11333023133476e-07\\
28	1.56154080992585e-07\\
29	1.15382331871712e-07\\
30	8.52560651859367e-08\\
31	6.29957510225246e-08\\
32	4.6547593279575e-08\\
33	3.43940409464445e-08\\
34	2.54137747900444e-08\\
35	1.87782514443352e-08\\
36	1.38752597841078e-08\\
37	1.02524366897089e-08\\
38	7.57553081614225e-09\\
39	5.59756367029569e-09\\
};
\addlegendentry{$\mathcal{O}(0.74^k)$}

\end{axis}

\begin{axis}[%
width=5.833in,
height=4.375in,
at={(0in,0in)},
scale only axis,
xmin=0,
xmax=1,
ymin=0,
ymax=1,
axis line style={draw=none},
ticks=none,
axis x line*=bottom,
axis y line*=left
]
\end{axis}
\end{tikzpicture}%
}
\caption{Residual error over $k$.}
\label{fig:reswave}
\end{subfigure}
\quad
\begin{subfigure}[t]{.48\linewidth}
\centering
\scalebox{.4}{
\centering
%
%
\definecolor{mycolor1}{rgb}{0.00000,0.44700,0.74100}%
\definecolor{mycolor2}{rgb}{0.85000,0.32500,0.09800}%
\begin{tikzpicture}

\begin{axis}[%
width=4.521in,
height=3.566in,
at={(0.758in,0.481in)},
scale only axis,
xmin=0,
xmax=40,
xlabel style={font=\color{white!15!black}},
xlabel={$k$},
ymode=log,
ymin=1e-05,
ymax=10,
yminorticks=true,
axis background/.style={fill=white},
legend style={at={(0.03,0.03)}, anchor=south west, legend cell align=left, align=left, draw=white!15!black}
]
\addplot [color=mycolor1, line width=2.0pt, mark=diamond, mark options={solid, mycolor1}]
  table[row sep=crcr]{%
0	0.332679104394192\\
1	0.249326511333728\\
2	0.213863314739937\\
3	0.0808363956895329\\
4	0.0791933887125275\\
5	0.0699669539783912\\
6	0.06573890837425\\
7	0.0681128411420446\\
8	0.0505308674073929\\
9	0.0473505339235372\\
10	0.0450667425026546\\
11	0.0329785563932157\\
12	0.0323438138565543\\
13	0.0273154427991758\\
14	0.0186783396338345\\
15	0.0122948100616028\\
16	0.0143032984968929\\
17	0.0122079994417851\\
18	0.0105042565469643\\
19	0.0105683135168499\\
20	0.0080663684277831\\
21	0.00887841638370054\\
22	0.00837596330743585\\
23	0.00827616989071222\\
24	0.00531520812913292\\
25	0.00458343819894966\\
26	0.00378027714201335\\
27	0.00374138911350134\\
28	0.00363351239395938\\
29	0.000923558232040614\\
30	0.000737379214711303\\
31	0.000792645918081725\\
32	0.000652767354943334\\
33	0.000414527637867591\\
34	0.000391115523135013\\
35	0.000211463345550766\\
36	0.000196319261898709\\
37	0.000154795381197031\\
38	0.000130477977017566\\
39	0.000162833135332364\\
};
\addlegendentry{$\|u_k-\bar{u}\|_{L^1(I)}$}

\addplot [color=mycolor2, line width=2.0pt, mark=square, mark options={solid, mycolor2}]
  table[row sep=crcr]{%
0	2.3149315799181\\
1	1.07420631588887\\
2	0.811546225104021\\
3	0.0465171627587244\\
4	0.168723885699073\\
5	0.0555216703163319\\
6	0.0884265285162211\\
7	0.158206683250322\\
8	0.196909132877082\\
9	0.126315049129613\\
10	0.099840047595086\\
11	0.136463064995679\\
12	0.133872110342756\\
13	0.120732804837727\\
14	0.135261169721061\\
15	0.121106112216473\\
16	0.0625779962837947\\
17	0.0422815525536113\\
18	0.0377370135116228\\
19	0.0387940436766026\\
20	0.0100995657736744\\
21	0.0139807217359671\\
22	0.0150050590013899\\
23	0.014068585259102\\
24	0.0142166594699145\\
25	0.00990260482136573\\
26	0.00569353254005689\\
27	0.00522727874583717\\
28	0.00512508630027586\\
29	0.000502419429263767\\
30	0.000300820768562637\\
31	5.3999576795416e-05\\
32	0.000204075274838011\\
33	0.000361548710913873\\
34	0.000115766703850717\\
35	3.70573982007016e-05\\
36	1.31104985099384e-05\\
37	4.01515037813382e-05\\
38	5.16873302713883e-05\\
39	7.0437962254033e-05\\
};
\addlegendentry{$|\|u_k '\|-\|\bar{u} '\||$}

\addplot [color=black, dashed, line width=2.0pt]
  table[row sep=crcr]{%
0	2.87974322926927\\
1	2.3149315799181\\
2	1.86089793188329\\
3	1.49591510303299\\
4	1.20251732088149\\
5	0.966664421054457\\
6	0.777069973717773\\
7	0.624661186345373\\
8	0.502144737184412\\
9	0.403657762950217\\
10	0.324487299226904\\
11	0.260844747763606\\
12	0.209684578095862\\
13	0.168558587697099\\
14	0.13549874647172\\
15	0.108923019267344\\
16	0.0875596596665962\\
17	0.0703863522375631\\
18	0.0565812909754871\\
19	0.045483852858967\\
20	0.0365629846055019\\
21	0.0293917898162098\\
22	0.0236271004110058\\
23	0.0189930547721825\\
24	0.015267896750085\\
25	0.012273363814686\\
26	0.00986615653703619\\
27	0.00793108118385816\\
28	0.00637553727318468\\
29	0.00512508630027586\\
30	0.0041198895810321\\
31	0.00331184475059146\\
32	0.00266228388802413\\
33	0.00214012311391319\\
34	0.00172037511225174\\
35	0.00138295339535089\\
36	0.00111171109143127\\
37	0.00089366825734407\\
38	0.00071839074049012\\
39	0.000577490866192023\\
};
\addlegendentry{$\mathcal{O}(0.8^k)$}

\end{axis}

\begin{axis}[%
width=5.833in,
height=4.375in,
at={(0in,0in)},
scale only axis,
xmin=0,
xmax=1,
ymin=0,
ymax=1,
axis line style={draw=none},
ticks=none,
axis x line*=bottom,
axis y line*=left
]
\end{axis}
\end{tikzpicture}%
}
\caption{Norm/$L^1$-error over~$k$.}
\label{fig:stricttopwave}
\end{subfigure}
\caption{Optimal control and convergence of relevant quantities.}
\end{figure}
Upon a closer inspection, in contrast to the first example, we now observe local clustering of the jumps of~$\bar{u}$. More in detail,~in the vicinity of every jump of~the reference function~$u^*$,~$\optu$ admits two jumps supported on neighbouring grid nodes. Similar discretization effects for sparse deconvolution problems have been observed in~\cite{duvalspikes}. Alongside the optimal control we also report on the convergence history of the residual~$r_j(u_k)$, the~$L^1$-distance of~$u_k$ and~$\bar{u}$ as well as the error of the norms in Figure~\ref{fig:reswave} and~\ref{fig:stricttopwave}. Again we observe a linear rate of convergence for all considered quantities.
\bibliographystyle{siam}
\bibliography{BV}
\end{document}